\def \RR {\mathbb{R}}
\def \NN {\mathbb{N}}
\def \N {\mathcal{N}}
\def \EE {\mathbb{E}}
\def \indun {\mathbf{1}}
\def \Rt {\tilde{R}}
\def \Rb {\bar{R}}
\def \Kt {\tilde{K}}
\def \rt {\tilde{r}}
\def \barf {\bar{f}}
\def \hatty {\tilde{\hat{y}}}
\newtheorem{prop}{Proposition}[section]
\newtheorem{lem}[prop]{Lemma}
\newtheorem{theo}[prop]{Theorem}
\newtheorem{cond}[prop]{Condition}
\newtheorem{defi}[prop]{Definition}
\newtheorem{rem}[prop]{Remark}
\newtheorem{cor}[prop]{Corollary}
\newcommand{\argmin}{\mathop{\mathrm{argmin}}}
\newcommand{\diag}{\mathop{\mathrm{diag}}}
\newcommand{\var}{\mathop{\mathrm{var}}}
\newcommand{\Tr}{\mathop{\mathrm{Tr}}}
\begin{document}

\title{Asymptotic analysis of covariance parameter estimation for Gaussian processes in the misspecified case}

\author{Fran\c cois Bachoc \\
Institut de Math\'ematiques de Toulouse \\
Universit\'e Paul Sabatier, \\
118 Route de Narbonne, \\
31400 Toulouse \\
francois.bachoc@math.univ-toulouse.fr
}

\maketitle

\begin{abstract}
In parametric estimation of covariance function of Gaussian processes, it is often the case that the true covariance function does not belong to the parametric set used for estimation. This situation is called the misspecified case. In this case, it has been shown that, for irregular spatial sampling of observation points, Cross Validation can yield smaller prediction errors than Maximum Likelihood. Motivated by this observation,
we provide a general asymptotic analysis of the misspecified case, for independent and uniformly distributed observation points.
We prove that the Maximum Likelihood estimator asymptotically minimizes a Kullback-Leibler divergence, within the misspecified parametric set, while Cross Validation asymptotically minimizes the integrated square prediction error. In a Monte Carlo simulation, we show that the covariance parameters estimated by Maximum Likelihood and Cross Validation, and the corresponding Kullback-Leibler divergences and integrated square prediction errors, can be strongly contrasting. On a more technical level, we provide new increasing-domain asymptotic results for independent and uniformly distributed observation points.

\end{abstract}

{\bf Keywords:} covariance parameter estimation;
cross validation;
Gaussian processes;
increasing-domain asymptotics;
integrated square prediction error;
Kullback-Leibler divergence;
maximum Likelihood

{\bf MSC 2010 subject classifications:} primary 62M30; secondary 62F12

\section{Introduction} \label{section:introduction}

Kriging models \citep{stein99interpolation,rasmussen06gaussian} consist in inferring the values of a Gaussian random field given observations at a finite set of observation points.
They have become a popular method for a
large range of applications, such as numerical code approximation \citep{sacks89design,santner03design} and calibration \citep{paulo12calibration} or global optimization \citep{jones98efficient}.

One of the main issues regarding Kriging is the choice of the covariance function
for the Gaussian process.
Indeed, a Kriging model yields an unbiased predictor with minimal variance and a correct predictive variance only if the correct covariance
function is used. The most common practice is to statistically estimate the covariance function, from a set
of observations of the Gaussian process, and to plug \citep[Ch.6.8]{stein99interpolation} the estimate in the Kriging equations.
Usually, it is assumed that the covariance function belongs to a given parametric family (see \cite{abrahamsen97review} for a review of classical families).
In this case, the estimation boils down to estimating the corresponding covariance parameters. For covariance parameter estimation, Maximum Likelihood (ML) is the most studied and used method, while Cross Validation (CV) \citep{sundararajan01predictive,zhang10kriging} is an alternative technique.

Consider first the case where the true covariance function of the Gaussian process belongs to the parametric family of covariance functions used for estimation, which we call the well-specified case. Then, it is shown in several references that ML should be preferred over CV. It is proved in \cite{stein90comparison} that for the estimation of a signal-to-noise ratio parameter of a Brownian motion, CV has twice the asymptotic variance of ML. 
In the situations treated by \cite{bachoc14asymptotic}, the asymptotic variance is also larger for CV than for ML.
Several numerical results, showing an advantage for ML over CV as well, are available, coming either from Monte Carlo studies as in \cite[Ch.3]{santner03design} or deterministic studies as in \cite{martin04use}.
The settings of both the above studies can arguably be classified in the well-specified case, since the interpolated functions are smooth, and the covariance structures are adapted, being Gaussian in \cite{martin04use} and having a free smoothness parameter in \cite{santner03design}.
Finally, in situations similar to the well-specified case, ML-type methods have been shown to be preferable over CV-type methods in \cite{Stein93spline} for estimation and prediction.

Consider now the case where the true covariance function of the Gaussian process does not belong to the parametric family of covariance functions used for estimation, which we call the misspecified case.
This can occur in many situations, given for example that it is frequent to enforce the smoothness parameter in the Mat\'ern model to an arbitrary value (e.g. $3/2$ in \cite{chevalier14fast}), which de facto makes the covariance model misspecified if the Gaussian process has a different order of smoothness.
In the misspecified case, \cite{bachoc13cross} shows that, provided the spatial sampling of observation points is not too regular,
CV can yield a smaller integrated square prediction error than ML.
In a context of spline approximation methods, \cite{Stein93spline} and \cite{Kou03efficiency} also suggest that CV-type methods can provide smaller prediction errors than ML-type methods under misspecification.

In this paper, we primarily aim at showing, in agreement with the preceding discussion, that CV can provide asymptotically optimal integrated square prediction errors under misspecification.
In this regard, the two most studied asymptotic frameworks in the Kriging literature are the increasing-domain and fixed-domain asymptotics \citep[p.62]{stein99interpolation}.
In increasing-domain asymptotics, the average density of observation points is bounded, so that the infinite sequence of observation points is unbounded. In fixed-domain asymptotics, this sequence is dense in a bounded domain.

In fixed-domain asymptotics, significant results are available concerning the estimation of the covariance function, and its influence on Kriging predictions and confidence intervals. In this asymptotic framework, two types of covariance parameters can be distinguished:
microergodic and non-microergodic covariance parameters. Following the definition
in \cite{stein99interpolation}, a covariance parameter is microergodic if two covariance functions are orthogonal whenever they differ for it (as in \cite{stein99interpolation}, we say that
two covariance functions are orthogonal if the two underlying Gaussian measures are orthogonal). Non-microergodic covariance parameters cannot be consistently estimated, but have no asymptotic influence on Kriging predictions and confidence intervals \citep{stein88asymptotically,stein90bounds,stein90uniform,zhang04inconsistent}. On the contrary,
there is a
fair amount of literature on consistent estimation of microergodic covariance parameters \citep{ying91asymptotic,ying93maximum,zhang04inconsistent,loh05fixed,anderes10consistent}.
Consistent estimation of microergodic parameters is shown, in some cases, to entail asymptotically optimal predictions and confidence intervals \citep{putter01effect}.

Nevertheless, a downside of fixed-domain asymptotics is that the results currently under reach, despite their significant insights, are restricted in terms of covariance model. For example, \cite{ying93maximum} addresses ML for the tensorized exponential model only and \cite{loh05fixed} addresses ML for the Mat\'ern $3/2$ covariance model only. 

Hence, in this paper, we work under increasing-domain asymptotics, in which case results can be proved for fairly general covariance models \citep{mardia84maximum,cressie93asymptotic,cressie96asymptotics,bachoc14asymptotic}.
In fact, generally speaking, under increasing-domain asymptotics, all (identifiable) covariance parameters
have a strong asymptotic influence on predictions \citep{bachoc14asymptotic} and can be consistently estimated with asymptotic normality \citep{mardia84maximum,bachoc14asymptotic}.
This is because increasing-domain asymptotics is characterized by a vanishing dependence between
observations from distant observation points, so that a large sample size gives more and more information about the covariance structure. Note that, beside Kriging, increasing-domain asymptotics is largely considered in spatial statistics \citep{lahiri16central,hallin09local}

The increasing-domain asymptotic setting we consider in this paper consists of $n$ independent observation points with uniform distribution on $[0,n^{1/d}]^d$, for $d \in \NN^*$. In Theorem \ref{theorem:oracle:CV}, we prove that CV asymptotically minimizes the integrated square prediction error, within the misspecified set of covariance functions used for estimation. On the other hand, we prove in Theorem \ref{theorem:oracle:ML} that ML asymptotically minimizes, the Kullback-Leibler divergence from the true covariance function, defined at the observation vector. This latter finding does not provide information on the prediction errors of the Gaussian process at new points, stemming from ML.
Thus, an asymptotic confirmation is given to the empirical finding of \cite{bachoc13cross}, that when the spatial sampling is not too regular, CV can provide smaller integrated square prediction errors than ML in the misspecified case. 

On a more technical level, we provide increasing-domain asymptotic results for matrix-form estimation criteria with independent and uniformly distributed observation points. To the best of our knowledge, this type of situation has not been addressed in the existing literature.

We conclude this paper by a Monte Carlo simulation, illustrating Theorems \ref{theorem:oracle:ML} and \ref{theorem:oracle:CV}. The simulation highlights that the ML and CV estimators can estimate radically different covariance parameters, and that their subsequent performances for the Kullback-Leibler divergence and the integrated square prediction error can be strongly contrasting. 

The rest of the paper is organized as follows. We present the context on parametric covariance function estimation in the misspecified case and on the spatial sampling in Section \ref{section:context}. We give the asymptotic optimality results for ML and CV in Section \ref{section:asymptotic:optimality}. We discuss the simulation results in Section \ref{section:MC}. All the proofs are given in the appendix.

Finally, note that one should be cautious about inferring from this paper that CV is preferable over ML in the misspecified case. Indeed, there exist other prediction scores than the integrated square prediction error (see \cite{gneiting07strictly,gneiting11making}) some of them also assessing the coverage of the confidence intervals obtained from the Kriging model.
The main contribution of this paper is to provide rigorous results for CV, relatively to the integrated square prediction error only, which is nonetheless a largely considered criterion for comparing predictors.

\section{Context} \label{section:context}

\subsection{Presentation and notation for the covariance model}

We consider a stationary Gaussian process $Y$ on $\RR^d$ with zero mean function and covariance function $K_{0}$. 
Noisy observations of $Y$ are obtained at the random points $X_1,...,X_n \in \RR^d$, for $n \in \NN^*$. That is, for $i=1,...,n$, we observe $y_i = Y(X_i) + \epsilon_i$, where $\epsilon = (\epsilon_1,...,\epsilon_n)^t$, $Y$ and $(X_1,...,X_n)$ are mutually independent and $\epsilon$ follows a $\N(0, \delta_0 I_n)$ distribution, with $\delta_0 \geq 0$ and $I_n$ the identity matrix of size $n$. The distribution of $(X_1,...,X_n)$ is specified and discussed in Condition \ref{cond:iid_obs_points} below.

The case where $Y$ is observed exactly is treated by this framework by letting $\delta_0 =0$. Otherwise, letting $\delta_0 >0$ can correspond for instance to measure errors \citep{bachoc14calibration} or to Monte Carlo computer experiments \citep{legratiet14regularity}. Note also that the case of a Gaussian process with discontinuous covariance function at $0$ (nugget effect) is mathematically equivalent to this framework if the observation points $X_1,...,X_n$ are two by two distinct. [This is the case in this paper, in an almost sure sense, see Condition \ref{cond:iid_obs_points}.]

Let $p \in \NN^*$ and let $\Theta$ be the compact subset $[\theta_{inf},\theta_{sup}]^p$ with $- \infty < \theta_{inf} < \theta_{sup} < + \infty$.
We consider a parametric model attempting to approximate the covariance function $K_0$ and the noise variance $\delta_0$,
$\{ ( K_{\theta} , \delta_{\theta} ), \theta \in \Theta \}$, with $K_{\theta}$ a stationary covariance function and $\delta_{\theta} >0$. We call the case where there exists $\theta_0 \in \Theta$ so that $(K_0,\delta_0) = (K_{\theta_0},\delta_{\theta_0})$ the well-specified case. The converse case, where $(K_{0},\delta_{0}) \neq (K_{\theta},\delta_{\theta})  $ for all $\theta \in \Theta$ is called the misspecified case.

The well-specified case has been extensively studied in the Gaussian process literature, see the references given in Section \ref{section:introduction}. 
Nevertheless, the misspecified case can occur in many practical applications. Indeed, even if we assume $\delta_{\theta} = \delta_{0}$ for all $\theta$, the standard covariance models $\{ K_{\theta} , \theta \in \Theta \}$ are often driven by a limited number of parameters and thus restricted in some ways. For instance, an existing practice (e.g. \cite{martin04use,conti10bayesian}) is to use the Gaussian covariance model, where $p=d+1$, $\Theta \subset (0,\infty)^p$, $\theta=(\sigma^2,\ell_1,...,\ell_d)$ and $K_{\theta}(t) = \sigma^2 \exp( - \sum_{i=1}^d  t_i^2/\ell_i^2)$. With the Gaussian covariance model, all the covariance functions $K_{\theta}$ generate Gaussian process realizations that are almost surely infinitely differentiable. Thus, the Gaussian model is de facto misspecified if the realizations of $Y$ have only a finite order of differentiability. [Note that the use of the Gaussian covariance model is dis-advised in several references, see \cite{stein99interpolation}.] In theory, the Mat\'ern model considered in Section \ref{section:MC} provides more flexibility by incorporating a tunable smoothness parameter $\nu > 0$. However, it is also common practice to enforce a priori this parameter $\nu$ to a fixed value (e.g. $3/2$ in \cite{chevalier14fast}).

In this paper, we are primarily interested in analyzing the misspecified case although the asymptotic results that are given in Section \ref{section:asymptotic:optimality} are valid for both the well-specified and misspecified cases.

We let $X = (X_1,...,X_n)$ be the random $n$-tuple of the $n$ observation points.
For $\theta \in \Theta$, we define the $n \times n$ random matrix $R_{\theta}$ by $\left(R_{\theta}\right)_{i,j} = K_{\theta} \left( X_i - X_j \right) + \delta_{\theta} \indun_{i=j}$. We define the $n \times n$ random matrix $R_{0}$ by $\left(R_{0}\right)_{i,j} = K_{0} \left( X_i - X_j \right) + \delta_{0} \indun_{i=j}$.
We define the random vector $y = (y_1,...,y_n)^t$ of size $n$ by $y_i = Y\left(X_i \right) + \epsilon_i$.
Then, conditionally to $X$, $y$ follows a $\N(0,R_0)$ distribution and is assumed to follow a $\N(0,R_{\theta})$ distribution under the covariance parameter $\theta$. 

\subsection{Maximum Likelihood and Cross Validation estimators}

The Maximum Likelihood (ML) estimator is defined by $\hat{\theta}_{ML} \in \argmin_{\theta} L_{\theta}$, where
\begin{equation}  \label{eq:MLtheta}
L_{\theta} := \frac{1}{n} \log{\left( \det{\left( R_{\theta} \right)} \right) } + \frac{1}{n} y^t R_{\theta}^{-1} y
\end{equation}
is the modified opposite log-likelihood.

\begin{rem}
For concision, we do not write explicitly the dependence of $R_{\theta}$, $R_0$, $y$ and $L_{\theta}$ on $X$, $n$, $Y$ and $\epsilon$.
We make the same remark for the CV criterion in \eqref{eq:CV:theta:with:sum} and \eqref{eq:CVtheta}.
\end{rem}

\begin{rem} \label{rem:several:global:maximizers}
In this paper, we allow the criterion \eqref{eq:MLtheta} to have more than one global minimizer, in which case, the asymptotic results of Section \ref{section:asymptotic:optimality} hold for any sequence of random variables $\hat{\theta}_{ML}$ minimizing it. The same remark can be made for the CV criterion \eqref{eq:CV:theta:with:sum}. We refer to Remark 2.1 in \cite{bachoc14asymptotic} for the existence of measurable minimizers of the ML and CV criteria. 
\end{rem}

Under several increasing-domain asymptotics settings, ML is consistent and asymptotically normal, with mean vector $0$ and covariance matrix the inverse of the Fisher information matrix. This is shown in \cite{mardia84maximum}, assuming either some convergence conditions on the covariance matrices and their derivatives or gridded observation points. Similar results are provided for Restricted Maximum Likelihood in \cite{cressie93asymptotic,cressie96asymptotics}. In \cite{bachoc14asymptotic} asymptotic normality is also shown for Maximum Likelihood, using only simple conditions on the covariance model and for observation points that constitute a randomly perturbed regular grid.

The Cross Validation (CV) estimator, minimizing the Leave One Out (LOO) mean square error is defined by $\hat{\theta}_{CV} \in \argmin_{\theta} CV_{\theta}$, with
\begin{equation} \label{eq:CV:theta:with:sum}
CV_{\theta} := \frac{1}{n} \sum_{i=1}^n \left( y_i - \hat{y}_{i,\theta} \right)^2,
\end{equation}
where $\hat{y}_{i,\theta} := \EE_{\theta|X}( y_i | y_1,...,y_{i-1},y_{i+1},...,y_n )$ is the LOO prediction
of $y_i$ with parameter $\theta$.
The conditional mean value $\EE_{\theta|X}$ denotes the expectation with respect to the distribution of $Y$ and $\epsilon$ with the covariance function $K_{\theta}$ and the variance $\delta_{\theta}$, given $X$, so that
$\EE_{\theta|X}( y_i | y_1,...,y_{i-1},y_{i+1},...,y_n ) = \EE_{\theta}( y_i |X, y_1,...,y_{i-1},y_{i+1},...,y_n )$.

Let $r_{i,\theta} = \left(K_{\theta}(X_i,X_1),...,K_{\theta}(X_i,X_{i-1}),K_{\theta}(X_i,X_{i+1}),...,K_{\theta}(X_i,X_n) \right)^t$. Define $r_{i,0}$ similarly with $K_0$. Define the $(n-1) \times (n-1)$ covariance matrix $R_{i,\theta}$ as the matrix extracted from $R_{\theta}$ by deleting its line and column $i$. Define $R_{i,0}$ similarly with $R_0$. Then,
with $y_{-i} = (y_1,...,y_{i-1},y_{i+1},...,y_n)^t$,
we have $\hat{y}_{i,\theta} = r_{i,\theta}^t R_{i,\theta}^{-1}y_{-i}$.

Note that $\hat{y}_{i,\theta}$ is invariant if $K_{\theta}$ and $\delta_{\theta}$ are multiplied by a common positive constant. Thus, the CV criterion \eqref{eq:CV:theta:with:sum} is designed to select only a correlation function $K_{\theta}/K_{\theta}(0)$ and a corresponding relative noise variance $\delta_{\theta} / K_{\theta}(0)$. In particular, the CV criterion \eqref{eq:CV:theta:with:sum} does not assess the validity of quantities like $\var_{\theta|X}(Y(t)|y)$, where $\var_{\theta|X}$ denotes the variance under parameter $\theta$ given X. Hence, the Kriging predictive confidence intervals obtained by CV can be unreliable.

There exist extensions of the CV criterion \eqref{eq:CV:theta:with:sum} taking into account $\var_{\theta|X}(y_i|y_1,...,y_{i-1},y_{i+1},...,y_n)$, and aiming in particular at selecting values for $K_{\theta}(0)$ and $\delta_{\theta}$ \citep{bachoc13cross}. There also exist alternative CV criteria, different from \eqref{eq:CV:theta:with:sum}, like the log predictive probability (\cite{rasmussen06gaussian}, chapter 5, \cite{zhang10kriging}, \cite{sundararajan01predictive}). Nevertheless, these extensions and alternatives shall not be investigated in this paper. We focus on the criterion \eqref{eq:CV:theta:with:sum}, for which only the predictors $\EE_{\theta|X}(Y(t)|y)$ of the values of $Y$ at new points $t$ are relevant. These predictors alone provide the same applicability as many regression techniques like kernel regression or neural network methods and
can be used in a wide range of applications. 

The criterion \eqref{eq:CV:theta:with:sum} can be computed with a single matrix inversion, by means of
virtual LOO formulas (see e.g \cite{ripley81spatial,dubrule83cross}).
These virtual LOO formulas yield, when writing $\diag\left(A\right)$ for the matrix obtained by setting to $0$ all the off diagonal elements of a square matrix $A$,
\begin{equation} \label{eq:CVtheta}
CV_{\theta} := \frac{1}{n} y^t R_{\theta}^{-1} \left( \diag\left(R_{\theta}^{-1}\right) \right)^{-2} R_{\theta}^{-1} y,
\end{equation}
which is useful both in practice (to compute the CV criterion quickly) and in the proofs for CV.

Finally, in \cite{bachoc14asymptotic} it is shown that, in the well-specified case, the CV estimator is consistent and asymptotically normal for estimating correlation parameters, under increasing-domain asymptotics with a randomly perturbed grid of observation points.

\subsection{Random spatial sampling} 

We consider an increasing-domain asymptotic framework where the observation points are independent and uniformly distributed, which constitutes the archetype of an irregular spatial sampling.

\begin{cond} \label{cond:iid_obs_points}
For all $n \in \NN^*$, the observation points $X_1,...,X_n$ are random and follow independently the uniform distribution on $[0,n^{1/d}]^d$. The variables $Y$, $(X_1,...,X_n)$ and $\epsilon$ are mutually independent.
\end{cond}

Condition \ref{cond:iid_obs_points} constitutes an increasing-domain asymptotic framework in the sense that the volume of the observation domain is $n$ and the average density of observation points is constant. Some authors define increasing-domain asymptotics by the condition that the minimum distance between two different observation points is bounded away from zero (e.g. \cite{zhang05toward}), which is not the case here.
In \cite{lahiri03central} and \cite{lahiri04asymptotic}, the term increasing-domain is also used, when points are sampled randomly on a domain with volume proportional to $n$.

\section{Asymptotic optimality results} \label{section:asymptotic:optimality}

\subsection{Technical assumptions}

We shall assume the following condition for the covariance function $K_0$, which is satisfied in all the most classical cases, and especially for the Mat\'ern covariance function. Let $|t| = \max_{i=1,...,d}|t_i|$.

\begin{cond} \label{cond:Kzero:deltazero}
  The covariance function $K_0$ is stationary and continuous on $\RR^d$. There exists $C_{0} < + \infty$ so that for $t \in \RR^d$,
\begin{equation*} 
\left|  K_{0}\left(t\right) \right| \leq \frac{C_{0}}{ 1+|t|^{d+1} }.
\end{equation*}
\end{cond}

Next, the following condition for the parametric set of covariance functions and noise variances is slightly non-standard but not restrictive. We discuss it below. 

\begin{cond}  \label{cond:Ktheta:deltatheta}
For all $\theta \in \Theta $, the covariance function $K_{\theta}$ is stationary.
For all fixed $t \in \RR^d$, $K_{\theta}(t)$ is $p+1$ times continuously differentiable with respect to $\theta$.
For all $i_1,...,i_p \in \NN$ so that $i_1+...+i_p \leq p+1$, there exists $A_{i_1,...,i_p} < + \infty$ so that for all $t \in \RR^d$, $\theta \in \Theta$,
\begin{equation*} 
\left| \frac{\partial^{i_1}}{\partial \theta_1^{i_1}}...\frac{\partial^{i_p}}{\partial \theta_p^{i_p}}  K_{\theta}\left(t\right) \right| \leq \frac{A_{i_1,...,i_p}}{ 1+|t|^{d+1} }.
\end{equation*}
There exists a constant $C_{inf}>0$ so that, for any $\theta \in \Theta$, $\delta_{\theta} \geq C_{inf}$.
 Furthermore, $\delta_{\theta}$ is $p+1$ times continuously differentiable with respect to $\theta$.
For all $i_1,...,i_p \in \NN$ so that $i_1+...+i_p \leq p+1$, there exists $B_{i_1,...,i_p} < + \infty$ so that for all $\theta \in \Theta$,
\begin{equation*} 
\left| \frac{\partial^{i_1}}{\partial \theta_1^{i_1}}...\frac{\partial^{i_p}}{\partial \theta_p^{i_p}}  \delta_{\theta} \right| \leq B_{i_1,...,i_p}.
\end{equation*}

\end{cond}

In Condition \ref{cond:Ktheta:deltatheta}, we require a differentiability order of $p+1$ for $K_{\theta}$ and $\delta_{\theta}$ with respect to $\theta$. In the related context of \cite{bachoc14asymptotic}, where a well-specified covariance model is studied, consistency of ML and CV can be proved with a differentiability order of $1$ only. [One can check that the proofs of Propositions 3.1 and 3.4 in \cite{bachoc14asymptotic} require only the first order partial derivatives of the Likelihood function.] The reason for this difference is that, as discussed after Theorem \ref{theorem:oracle:CV}, an additional technical difficulty is present here, compared to \cite{bachoc14asymptotic}. The specific approach we use requires the condition of differentiability order of $p+1$ and we leave open the question of relaxing it.
Note, anyway, that many parametric covariance models are infinitely differentiable with respect to the covariance parameters, especially the Mat\'ern model.
In Condition \ref{cond:Ktheta:deltatheta}, assuming that the covariance function and its derivatives vanish with distance with a polynomial rate of order $d+1$ is not restrictive. Indeed, many covariance functions vanish at least exponentially fast with distance.

Finally, the condition that the noise variance $\delta_{\theta}$ is lower bounded uniformly in $\theta$ is crucial for our proof methods. Since we address here noisy observations of Gaussian processes, this condition is reasonable so that the results of Section \ref{section:MC} can cover a large variety of practical situations, some of which are listed in Section \ref{section:context}. Note that even when the Gaussian process under consideration is observed exactly, it can be desirable to incorporate an instrumental positive term $\delta_{\theta}$ in the parametric model, for numerical reasons or for not interpolating exactly the observed values \citep{andrianakis12effect}. Thus, Condition \ref{cond:Ktheta:deltatheta} could also be considered for Gaussian processes that are observed without noise.

\subsection{Maximum Likelihood}

In this paper, the analysis of the ML estimator in the misspecified case is based on the Kullback-Leibler divergence of the distribution of $y$ assumed under $(K_{\theta},\delta_{\theta})$, for $\theta \in \Theta$, from the true distribution of $y$. More precisely, conditionally to $X$, $y$ has a $\N(0,R_0)$ distribution and is assumed to have a $\N(0,R_{\theta})$ distribution. The conditional Kullback-Leibler divergence of the latter distribution from the former is, after multiplication by $2/n$,

\begin{equation} \label{eq:KL:zero:theta}
D_{n,\theta} := \frac{1}{n} \left\{  \log\left(\det\left( R_{\theta} R_{0}^{-1} \right)\right) + \Tr\left( R_{0} R_{\theta}^{-1} \right) \right\} - 1.
\end{equation}

The normalized Kullback-Leibler divergence in \eqref{eq:KL:zero:theta} is equal to $0$ if and only if $R_{\theta}= R_0$ and is strictly positive otherwise. It is interpreted as an error criterion for using $(K_\theta,\delta_{\theta})$ instead of $(K_0,\delta_{0})$, when making inference on the Gaussian process $Y$.  

Note that $D_{n,\theta}$ is here appropriately scaled so that, if for a fixed $\theta$ $(K_{\theta},\delta_{\theta}) \neq (K_0,\delta_0)$, $D_{n,\theta}$ should generally not vanish, nor diverge to infinity under increasing-domain asymptotics. This can be shown for instance in the framework of \cite{bachoc14asymptotic}, by using the methods employed there. It is also well-known that, in the case of a regular grid of observation points for $d=1$, $D_{n,\theta}$ converges to a finite limit as $n \to +\infty$ \citep{azencott86series}. This limit is
twice the asymptotic Kullback information in \cite{azencott86series} and is positive if $(K_{\theta}(t),\delta_{\theta})$ differs from $(K_{0}(t),\delta_{0})$ for at least one point $t$ in the regular grid of observation points.
Similarly, in the spatial sampling framework of Condition \ref{cond:iid_obs_points}, we observe in the Monte Carlo simulations of Section \ref{section:MC} that the order of magnitude of \eqref{eq:KL:zero:theta} does not change when $n$ increases, for $(K_{\theta},\delta_{\theta}) \neq (K_0,\delta_0)$.

The following theorem shows that the ML estimator asymptotically minimizes the normalized Kullback-Leibler divergence.

\begin{theo} \label{theorem:oracle:ML}

Under Conditions \ref{cond:iid_obs_points}, \ref{cond:Kzero:deltazero} and \ref{cond:Ktheta:deltatheta}, we have, as $n \to \infty$,
\[
D_{n,\hat{\theta}_{ML}} = \inf_{\theta \in \Theta} D_{n,\theta} + o_p(1),
\]
where the $o_p(1)$ in the above display is a function of $X$ and $y$ only that goes to $0$ in probability as $n \to \infty$.
\end{theo}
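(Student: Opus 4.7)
My strategy is to reduce the theorem to a uniform-in-$\theta$ law of large numbers for $L_\theta$. The key initial observation is that, conditionally on $X$, the Gaussian vector $y$ has distribution $\N(0,R_0)$, so
\[
\EE(L_\theta \mid X) = \tfrac{1}{n}\log\det R_\theta + \tfrac{1}{n}\Tr(R_0 R_\theta^{-1}) = D_{n,\theta} + c_n,
\]
where $c_n := \tfrac{1}{n}\log\det R_0 + 1$ does not depend on $\theta$. Hence $\theta \mapsto \EE(L_\theta\mid X)$ and $\theta \mapsto D_{n,\theta}$ have identical oscillations, and the minimizers of one coincide with those of the other. Once I establish
\[
\Delta_n := \sup_{\theta\in\Theta} \bigl| L_\theta - \EE(L_\theta\mid X) \bigr| = o_p(1),
\]
the conclusion follows by a standard sandwich: for any $\theta_n^*\in\argmin_\theta D_{n,\theta}$, since $\hat{\theta}_{ML}$ minimises $L_\theta$,
\[
D_{n,\hat{\theta}_{ML}} - D_{n,\theta_n^*} = [\EE(L_{\hat{\theta}_{ML}}\mid X) - L_{\hat{\theta}_{ML}}] + [L_{\hat{\theta}_{ML}} - L_{\theta_n^*}] + [L_{\theta_n^*} - \EE(L_{\theta_n^*}\mid X)] \leq 2\Delta_n.
\]

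The bulk of the work is therefore the uniform convergence $\Delta_n \to 0$. Fix $\theta$ and set $g_n(\theta) := L_\theta - \EE(L_\theta\mid X) = \tfrac{1}{n}[y^t R_\theta^{-1}y - \Tr(R_0 R_\theta^{-1})]$, a centred (conditional on $X$) quadratic form in a Gaussian vector. Its conditional variance satisfies
\[
\var(g_n(\theta) \mid X) = \tfrac{2}{n^2}\Tr\bigl((R_0 R_\theta^{-1})^2\bigr) \leq \tfrac{2}{n^2}\|R_\theta^{-1}\|_{op}^2\|R_0\|_F^2 \leq \tfrac{2}{n^2 C_{inf}^2}\|R_0\|_F^2,
\]
where the lower bound $\delta_\theta \geq C_{inf}$ from Condition \ref{cond:Ktheta:deltatheta} controls the operator norm of $R_\theta^{-1}$ uniformly in $\theta$. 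Under Condition \ref{cond:iid_obs_points}, the density of $X_i - X_j$ is bounded by $n^{-1}$ on its support, so $\EE[K_0(X_i-X_j)^2] \leq n^{-1}\int_{\RR^d} K_0(t)^2\,dt$, an integral that is finite thanks to the polynomial decay of order $d+1$ in Condition \ref{cond:Kzero:deltazero}. Summing over the $n^2$ entries and adding the $O(n)$ diagonal yields $\EE\|R_0\|_F^2 = O(n)$, hence $\EE|g_n(\theta)|^2 = O(1/n)$ uniformly in $\theta$.

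To pass from pointwise to uniform control I would invoke the Sobolev embedding $W^{p+1,1}(\Theta) \hookrightarrow C(\Theta)$ on the hypercube $\Theta \subset \RR^p$, which gives
\[
\sup_{\theta\in\Theta} |g_n(\theta)| \leq C \sum_{|k|\leq p+1} \int_\Theta |\partial_\theta^k g_n(\theta)|\,d\theta.
\]
This is precisely why Condition \ref{cond:Ktheta:deltatheta} imposes differentiability of order $p+1$: each derivative $\partial_\theta^k g_n(\theta)$ is again a conditionally centred quadratic form in $y$, whose defining matrix is a product of $R_\theta^{-1}$ with mixed derivatives of $R_\theta$. The uniform tail bounds on the derivatives of $K_\theta$ and the uniform bounds on the derivatives of $\delta_\theta$ in Condition \ref{cond:Ktheta:deltatheta} let me repeat the Frobenius-norm argument verbatim, producing $\EE|\partial_\theta^k g_n(\theta)|^2 = O(1/n)$ uniformly in $\theta$. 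Cauchy--Schwarz, Fubini, and Markov then deliver $\Delta_n = O_p(n^{-1/2})$, finishing the proof.

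The main obstacle is the Frobenius-norm estimate $\EE\|R_0\|_F^2 = O(n)$ under the iid uniform design of Condition \ref{cond:iid_obs_points}: for perturbed-grid designs as in \cite{bachoc14asymptotic} this would be essentially immediate from the bounded minimum spacing of points, but here one must carefully exploit both the $n^{-1}$ scaling of the density of the pair differences $X_i-X_j$ and the fact that the polynomial decay exponent $d+1$ in Conditions \ref{cond:Kzero:deltazero}--\ref{cond:Ktheta:deltatheta} is strictly larger than $d$, so the associated tail integral converges. Establishing analogous Frobenius bounds for every mixed-derivative matrix of order up to $p+1$, and then switching cleanly between conditional (on $X$) and unconditional moments, is the main technical price paid for the novelty of random iid uniform sampling.
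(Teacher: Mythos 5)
Your top-level architecture is the same as the paper's: the identity $\EE(L_\theta\mid X)=D_{n,\theta}+\tfrac1n\log\det R_0+1$, the sandwich inequality via $2\sup_\theta|L_\theta-\EE(L_\theta\mid X)|$, a conditional-variance computation for the centred quadratic form, and a Sobolev-type embedding on the hypercube $\Theta$ to pass to uniformity (which is exactly why order $p+1$ of differentiability is assumed). The gap lies in the step where you claim the Frobenius-norm argument extends \emph{verbatim} to the derivatives $\partial_\theta^k g_n(\theta)$. For $k=0$ your bound $\Tr\bigl((R_0R_\theta^{-1})^2\bigr)\le \|R_\theta^{-1}\|^2\,\Tr(R_0^2)$ works because the only factor without a uniformly bounded operator norm is $R_0$ itself, and $\EE\,\Tr(R_0^2)=O(n)$ follows from the single convolution-type integral you describe. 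But $\partial_\theta^k(R_\theta^{-1})$ is a sum of products interleaving $R_\theta^{-1}$ (bounded operator norm, by Lemma \ref{lem:controle:Rmun:diagRmun}) with derivative matrices $\partial^j R_\theta$, and under the iid uniform design the operator norms of $R_0$ and of $\partial^j R_\theta$ are \emph{not} bounded, even in probability (clusters of nearly coincident points), while the Frobenius norm of each such factor is only $O(\sqrt n)$. Every naive submultiplicative estimate of $\Tr\bigl((R_0\,\partial_\theta^k(R_\theta^{-1}))^2\bigr)$ therefore either invokes an unbounded operator norm or accumulates a factor of order $\sqrt n$ per matrix, giving $O(n^2)$ instead of the $O(n)$ needed for the variance to vanish after division by $n^2$.

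What is missing is the paper's central technical input for this sampling scheme: the statement that the \emph{expected} normalized Frobenius norm of an arbitrary product $A_1\cdots A_k$ of matrices whose $(i,j)$ entries are dominated by $(1+|X_i-X_j|^{d+1})^{-1}$ stays bounded (Lemma \ref{lem:norm:Aun...Ak:carre}), proved by iterating the convolution inequality of Lemma \ref{lem:integrale:abc}, together with a device for inserting the bounded-operator-norm factors $R_\theta^{-1}$ between the decaying-kernel factors without destroying the bound (Lemma \ref{lem:alternate:Rmun}). This is precisely the obstacle the paper identifies as new for independent uniform observation points. You correctly flag "establishing analogous Frobenius bounds for every mixed-derivative matrix" as the main technical price, but the proposal does not supply the idea that pays it, and the route you indicate (bounding the product through operator norms of the inverse times the Frobenius norm of $R_0$ alone) would fail for any $k\ge 1$.
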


Theorem \ref{theorem:oracle:ML} is in line with the well-known fact that, in the i.i.d. setting,
ML asymptotically minimizes the Kullback-Leibler divergence (which does not depend on sample size) from the true distribution, within a misspecified parametric model \citep{white82maximum}.
Theorem \ref{theorem:oracle:ML} conveys a similar message, with the normalized Kullback-Leibler divergence that depends on the spatial sampling. As discussed above, the infimum in Theorem \ref{theorem:oracle:ML} is typically lower bounded as $n \to \infty$ in the misspecified case.

Note that Theorem \ref{theorem:oracle:ML} can be shown, in increasing-domain asymptotics, under other spatial samplings than that of Condition \ref{cond:iid_obs_points} (e.g. for the randomly perturbed regular grid of \cite{bachoc14asymptotic}). Nevertheless, to the best of our knowledge, in the context of Condition \ref{cond:iid_obs_points}, Theorem \ref{theorem:oracle:ML} is not a simple consequence of the existing literature, and an original proof is provided in Section \ref{subsection:proof:ML}.

The Kullback-Leibler divergence is of course a central quality criterion for covariance parameters. Nevertheless, in the misspecified-case, Theorem \ref{theorem:oracle:ML} does not imply that ML is optimal for other common quality criteria, such as the integrated square prediction error introduced below. In addition, note that the Kullback-Leibler divergence addresses the distribution of the Gaussian process only at the observation points, thus providing no information on the inference of the values of $Y$ at new points, obtained from $(K_{\theta},\delta_{\theta})$. 

\subsection{Cross Validation}

Let us recall the notation $\EE_{\theta|X}(Y(t)| y) = \EE_{\theta}(Y(t)| y,X)$ and let $\hat{y}_{\theta}(t) = \EE_{\theta|X}(Y(t)| y)$.
With the $n\times 1$ vector $r_{\theta}(t)$ so that
$\left(r_{\theta}(t)\right)_j = K_{\theta}( t - X_j )$,
we have $\hat{y}_{\theta}(t) = r_{\theta}^t(t) R_{\theta}^{-1}y$.
Then, define the family of random variables
\begin{equation} \label{eq:MISE}
E_{n,\theta} = \frac{1}{n} \int_{[0,n^{1/d}]^d}  \left( \hat{y}_{\theta}(t) - Y(t) \right)^2 dt,
\end{equation}
where the integral is defined in the $L^2$ sense since $K_0$ is continuous. We call the criterion \eqref{eq:MISE} the integrated square prediction error. This criterion (or evaluations of it) is very commonly used, in particular for Gaussian process surrogate models of computer experiments (see e.g. \cite{marrel08efficient,gramacy15local}). More generally, the square prediction error is largely considered to evaluate predictors, see \cite{gneiting11making}.

It is natural to consider that the first objective of the CV estimator $\hat{\theta}_{CV}$ is to yield a small $E_{n,\hat{\theta}_{CV}}$. If the observation points $X_1,...,X_n$ are regularly spaced, then this objective might however not be fulfilled. Indeed, the principle of CV does not really have grounds in this case, since the LOO prediction errors are not representative of actual prediction errors for new points. This fact is only natural and has been noted in e.g. \cite{iooss10numerical} and \cite{bachoc13cross}. If however the observation points $X_1,...,X_n$ are not regularly spaced, then it is shown numerically in \cite{bachoc13cross} that the CV estimator $\hat{\theta}_{CV}$ can yield a small $E_{n,\hat{\theta}_{CV}}$ and, especially, smaller than $E_{n,\hat{\theta}_{ML}}$. The following theorem, which is the main contribution of this paper, supports this conclusion under increasing-domain asymptotics.

\begin{theo} \label{theorem:oracle:CV}

Under Conditions \ref{cond:iid_obs_points}, \ref{cond:Kzero:deltazero} and \ref{cond:Ktheta:deltatheta}, we have, as $n \to \infty$,
\[
E_{n,\hat{\theta}_{CV}} = \inf_{\theta \in \Theta} E_{n,\theta} + o_p(1),
\]
where the $o_p(1)$ in the above display is a function of $X$, $y$ and $Y$ only that goes to $0$ in probability as $n \to \infty$.
\end{theo}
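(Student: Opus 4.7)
The plan is to reduce the theorem to a uniform-in-$\theta$ approximation and then close with a standard M-estimator argument. Specifically, it suffices to exhibit a random sequence $c_n$, not depending on $\theta$, such that
\[
\sup_{\theta \in \Theta} \bigl| CV_\theta - E_{n,\theta} - c_n \bigr| \xrightarrow{\PP} 0.
\]
Given this, choose a measurable minimizer $\theta_n^\star \in \argmin_{\theta \in \Theta} E_{n,\theta}$ (as in Remark \ref{rem:several:global:maximizers}) and use $CV_{\hat{\theta}_{CV}} \leq CV_{\theta_n^\star}$ to obtain
\[
E_{n,\hat{\theta}_{CV}} - \inf_{\theta \in \Theta} E_{n,\theta} \;\leq\; 2\sup_{\theta \in \Theta}\bigl|CV_\theta - E_{n,\theta} - c_n\bigr| \;=\; o_p(1),
\]
while the reverse inequality is trivial.

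The natural candidate is $c_n = \delta_0$. I would write $y = Z + \epsilon$ with $Z_i = Y(X_i)$ and, using the LOO identity behind \eqref{eq:CVtheta}, split
\[
CV_\theta = \tfrac{1}{n}\sum_{i=1}^n (Y(X_i) - \hat{y}_{i,\theta})^2 + \tfrac{2}{n}\sum_{i=1}^n \epsilon_i (Y(X_i) - \hat{y}_{i,\theta}) + \tfrac{1}{n}\sum_{i=1}^n \epsilon_i^2.
\]
The last term is $\delta_0 + o_p(1)$ by the strong law and is $\theta$-free. The cross term has conditional mean zero given $(X,Y)$; its conditional variance is a quadratic form in the LOO weight matrix, whose entries are controlled by $\|R_\theta^{-1}\|_{op} \leq 1/C_{inf}$ and the polynomial decay of $K_\theta$, giving $o_p(1)$ pointwise in $\theta$. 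For the first term, recognize $\hat{y}_{i,\theta} = \hat{y}_{-i,\theta}(X_i)$, the kriging predictor built from $y_{-i}$ and evaluated at $X_i$. Conditioning on $\sigma(X_{-i}, y_{-i}, Y)$ and using that $X_i$ remains uniform on $[0,n^{1/d}]^d$,
\[
\EE\bigl[(Y(X_i) - \hat{y}_{i,\theta})^2 \bigm| X_{-i}, y_{-i}, Y\bigr] \;=\; \tfrac{1}{n}\!\int_{[0,n^{1/d}]^d}\!(Y(t) - \hat{y}_{-i,\theta}(t))^2\,dt \;=:\; E_{n,\theta}^{(-i)}.
\]
A conditional-variance estimate of the same flavor, combined with the stability bound $\sup_{i,\theta}|E_{n,\theta}^{(-i)} - E_{n,\theta}| \to 0$ (which follows from a block-matrix formula showing $\hat{y}_\theta - \hat{y}_{-i,\theta}$ is $L^2$-localized near $X_i$ through the decay of $K_\theta$), then yields $\tfrac{1}{n}\sum_i (Y(X_i) - \hat{y}_{i,\theta})^2 = E_{n,\theta} + o_p(1)$ pointwise in $\theta$.

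The main obstacle is lifting these pointwise statements to uniform-in-$\theta$ statements over the $p$-dimensional compact set $\Theta$. This is where the non-standard $(p+1)$-differentiability of Condition \ref{cond:Ktheta:deltatheta} enters. The plan is to invoke the Sobolev embedding $W^{p+1,2}(\Theta) \hookrightarrow C^0(\Theta)$,
\[
\sup_{\theta \in \Theta}|f_n(\theta)|^2 \;\leq\; C \sum_{|\alpha| \leq p+1} \int_\Theta |\partial_\theta^\alpha f_n(\theta)|^2 \,d\theta,
\]
with $f_n(\theta) = CV_\theta - E_{n,\theta} - c_n$; after taking expectations and applying Markov, uniform convergence reduces to $\sum_{|\alpha| \leq p+1}\int_\Theta \EE[(\partial_\theta^\alpha f_n(\theta))^2]\,d\theta \to 0$. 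Each such derivative is a quadratic form in $y$ whose kernel is a polynomial expression in $R_\theta^{-1}$, $\partial_\theta^\beta R_\theta$, $\diag(R_\theta^{-1})^{-1}$ and $\int r_\theta(t) r_\theta(t)^T dt$; controlling their Frobenius and spectral norms uniformly in $\theta$ is the technical heart of the proof and relies crucially on the spectral bound $\|R_\theta^{-1}\|_{op} \leq 1/C_{inf}$, the polynomial decay estimates on $K_\theta$ and its $\theta$-derivatives, and the fact that under Condition \ref{cond:iid_obs_points} each $X_i$ has only $O(1)$ expected neighbors within any fixed distance.
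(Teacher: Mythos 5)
Your skeleton coincides with the paper's: reduce the theorem to $\sup_{\theta \in \Theta}\left|CV_\theta - E_{n,\theta} - \delta_0\right| \to_p 0$, identify $c_n=\delta_0$, use the fact that $X_i$ (respectively the integration variable $t$) is uniform on $[0,n^{1/d}]^d$ and independent of the rest so that the LOO errors and the integrated square prediction error have matching conditional means, and close with the M-estimation inequality. Those are the correct structural observations. But the two steps you assert rather than prove are exactly where the difficulty of this theorem lives. Writing $A_i=(Y(X_i)-\hat y_{i,\theta})^2$ and $B_i=\EE[A_i\mid X_{-i},y_{-i},Y]$, you need $\var\bigl(\tfrac1n\sum_i (A_i-B_i)\bigr)\to 0$. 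The diagonal contribution is $O(1/n)$, but there are $n^2$ off-diagonal covariances $\EE[(A_i-B_i)(A_j-B_j)]$, and these are not zero: $A_j-B_j$ is not measurable with respect to the $\sigma$-field on which $A_i-B_i$ is centered, since $\hat y_{j,\theta}$ depends on $(X_i,\epsilon_i)$ and vice versa. Controlling them requires a quantitative decorrelation of the LOO residuals at distant points, and under Condition \ref{cond:iid_obs_points} (no minimal inter-point distance, clusters, unbounded eigenvalues of the covariance matrices and their derivatives) this cannot be extracted from $\|R_\theta^{-1}\|\le C_{sup}$ and the decay of $K_\theta$ alone. The paper's Definition \ref{def:blockwise} and Lemmas \ref{lem:blockwise} through \ref{lem:cvg:mean:fk} exist precisely for this: replace $K_\theta$ by a block-diagonal $\Kt_\theta$, show the replacement is $L^1$-negligible, and exploit independence across blocks. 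The phrase ``a conditional-variance estimate of the same flavor'' skips the main technical content of the proof; the same remark applies to your stability claim $\sup_{i,\theta}|E_{n,\theta}^{(-i)}-E_{n,\theta}|\to 0$, which is plausible for a single $i$ but needs a uniform-in-$i$ control of LOO residuals at clustered points.

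The uniformity in $\theta$ is also set up in a way that is substantially harder than necessary. Applying the Sobolev embedding directly to $f_n(\theta)=CV_\theta-E_{n,\theta}-\delta_0$ forces you to prove $\int_\Theta \EE[(\partial_\theta^\alpha f_n(\theta))^2]\,d\theta\to 0$ for every $|\alpha|\le p+1$, i.e.\ that each $\theta$-derivative of the difference up to order $p+1$ itself concentrates to zero. Pointwise convergence of $f_n$ does not imply convergence of its derivatives, so you would have to redo the entire mean-matching and decorrelation analysis for every multi-index $\alpha$. The paper's route is much lighter: pointwise convergence in probability of each of the five pieces for fixed $\theta$, plus \emph{boundedness} (not vanishing) of $\EE\sup_\theta|\partial_{\theta_i}CV_\theta|$ and $\EE\sup_\theta|\partial_{\theta_i}E_{n,\theta}|$ (Lemma \ref{lem:control:derivative:CV}, Corollaries \ref{cor:controle:derivative} and \ref{cor:controle:derivative:Entheta}), plus compactness of $\Theta$; the Sobolev-type Lemma \ref{lem:sobolev} is used only inside the proofs of those boundedness statements, which is where the order-$(p+1)$ differentiability is consumed. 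As written, your uniformity step imposes a strictly stronger, unverified requirement, and the concentration step is missing its decorrelation argument; both would need to be supplied before the proof is complete.
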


In \eqref{eq:MISE}, we stress that $E_{n,\theta}$ and the observation vector $y$ are defined with respect to the same Gaussian process $Y$. Thus, Theorem \ref{theorem:oracle:CV} gives a guarantee for the estimator $\hat{\theta}_{CV}$ relatively to the predictions it yields for the actual Gaussian process at hand. Theorem \ref{theorem:oracle:CV} not only confirm that CV will not provide asymptotically larger integrated square prediction errors than ML, with independent and uniformly distributed observation points, it also show that these integrated square prediction errors will be asymptotically minimal, over all possible estimators.

The setting of the proof of Theorem \ref{theorem:oracle:CV} combines independent and uniformly distributed observation points with the matrix-form estimation criteria \eqref{eq:MLtheta} and \eqref{eq:CVtheta}. These criteria and their derivatives involve imbrications of covariance matrix derivatives and inverse covariance matrices, which can generally not be put in explicit matrix-free forms. To the best of our knowledge, this specific combination has not been addressed in the previous literature. 

Indeed, on the one hand, when matrix-form criteria like \eqref{eq:MLtheta} and \eqref{eq:CVtheta} are treated, it is assumed, implicitly or explicitly that there exists a positive minimal distance between two different observation points. This is the case in \cite{bachoc14asymptotic}. Also, \cite{mardia84maximum} and \cite{cressie93asymptotic,cressie96asymptotics} work under non-trivial assumptions on the covariance matrices involved, and show that these assumptions are fulfilled for examples of spatial samplings for which the minimal distance between two different observation points is bounded away from zero. This minimal distance assumption does not hold with independent observation points. Instead clusters of closely spaced observation points may appear. As a consequence, the maximum eigenvalues of
the covariance matrices and their derivatives are not upper bounded, even in probability, which brings new obstacles for the analysis of criteria like \eqref{eq:MLtheta} and \eqref{eq:CVtheta}. In addition, considering random observation points with no underlying grid structure makes it more challenging to control the fluctuations of functions of (random) covariance matrices, compared to Proposition D.7 of \cite{bachoc14asymptotic} for instance.

On the other hand, when independent and uniformly distributed observation points are considered (see e.g. \cite{lahiri03central,lahiri04asymptotic,lahiri06resampling}), the quantities of interest do not involve derivatives and inverse of $n \times n$ covariance matrices.

As a consequence, the proof we propose for Theorem \ref{theorem:oracle:CV} is original and we do not address the asymptotic distribution of the ML and CV estimators. We leave this problem open to further research. Note nevertheless that, in the misspecified case addressed here, the fact that the ML and CV estimators minimize two different criteria and are thus typically asymptotically different is, in our opinion, at least as important as their asymptotic distributions.

\begin{rem}
An important element in the proof of Theorem \ref{theorem:oracle:CV} is that the variable $t$ in the expression of the integrated square prediction error $E_{n,\theta}$ in \eqref{eq:MISE} plays the same role as a new point $X_{n+1}$, uniformly distributed on $[0,n^{1/d}]^d$ and independent of $(X_1,...,X_n)$. Hence, using the symmetry of $X_1,...,X_{n+1}$, for fixed $\theta$, the mean value of $E_{n,\theta}$ is equal to the mean value of a modification of the CV criterion $CV_{\theta}$ in \eqref{eq:CV:theta:with:sum}, where there are $n+1$ observation points instead of $n$. Thus, one can indeed expect that the CV estimator minimizing $CV_{\theta}$ also asymptotically minimizes $E_{n,\theta}$. [The challenging part for proving Theorem \ref{theorem:oracle:CV} is to control the deviations of the criteria $E_{n,\theta}$ and $CV_{\theta}$ from their mean values, uniformly in $\theta$.] This discussion is exactly the paradigm of CV, that uses the LOO errors as empirical versions of the actual prediction errors. On the other hand, if the observation points constitute for instance a regular grid, then the variable $t$ in $E_{n,\theta}$ has close to nothing in common with them, so that Theorem \ref{theorem:oracle:CV} would generally not hold. This stresses that CV is generally not efficient for regular sampling of observation points, as discussed above.
\end{rem}

\section{Monte Carlo simulation} \label{section:MC}

We illustrate Theorems \ref{theorem:oracle:ML} and \ref{theorem:oracle:CV} in a Monte Carlo simulation. 
We consider the Mat\'ern covariance model in dimension $d=1$. A covariance function on $\RR$ is Mat\'ern ($\sigma^2,\ell,\nu$) when it is written

\begin{equation*} 
K_{\sigma^2,\ell,\nu}\left(t\right) = \frac{\sigma^2}{\Gamma\left(\nu\right)2^{\nu-1}} \left(  2 \sqrt{\nu} \frac{|t|}{\ell} \right)^{\nu} K_{\nu} \left( 2 \sqrt{\nu} \frac{|t|}{\ell} \right),
\end{equation*}
with $\Gamma$ the Gamma function and $K_{\nu}$ the modified Bessel function of second order. The parameters $\sigma^2$, $\ell$ and $\nu$ are respectively the variance, correlation length and smoothness parameters. 
We refer to e.g \cite{stein99interpolation} for a presentation of the Mat\'ern covariance function.

In the simulation, the true covariance function of $Y$ is Mat\'ern ($\sigma_0^2,\ell_0,\nu_0$) with $\sigma_0^2 =1$, $\ell_0 = 3$ and $\nu_0=10$. This choice of $\nu_0$ corresponds to a smooth Gaussian process and enables, as we see below, to illustrate Theorems \ref{theorem:oracle:ML} and \ref{theorem:oracle:CV} in a more striking manner. The true noise variance is $\delta_0 = 0.25^2$.

For the covariance and noise variance model, it is considered that the smoothness parameter $\nu_0$ is known, that the noise variance $\delta_1$ is fixed, and that the parameter $\theta = (\sigma^2,\ell)$ is estimated by ML or CV. For both ML and CV, the optimization is restricted to the domain $\Theta = [0.1^2,10^2] \times [0.2,10]$. [We experience that the conclusions of the Monte Carlo simulation are the same if a larger optimization domain is considered.]
The well-specified case corresponds to $\delta_1 = \delta_0$ and the misspecified case corresponds to $\delta_1 = 0.1^2 \neq \delta_0$. This covariance model is representative of practical applications. Indeed, first it is common practice to fix the value of the smoothness parameter in the Mat\'ern model, as is discussed in Section \ref{section:context}. Second, when using Gaussian process models on experimental or natural data, it can often occur that field experts provide an a priori value for the noise variance (see e.g. \cite{bachoc14calibration}). The misspecified case we address corresponds to an underestimation of the noise variance, possibly because some sources of measurement errors have been neglected.

The Monte Carlo simulation is carried out as follows. For $n=100$ and $N=1000$ or $n=500$ and $N=200$ we repeat $N$ data generations, estimations and quality criterion computations and average the results. More specifically, we simulate $N$ independent realizations of the observation points, of the observation vector and of the Gaussian process on $[0,n]$, under the true covariance function and noise variance. For each of these $N$ realizations, we compute the ML and CV estimates under the well-specified and misspecified models. For each of these estimates of the form $\hat{\theta} = (\hat{\sigma}^2, \hat{\ell})$, we compute the corresponding criteria $D_{n,\hat{\sigma}^2,\hat{\ell}}$ and $E_{n,\hat{\sigma}^2,\hat{\ell}}$. 

In Figure \ref{fig:delta:n100} we report, for $n=100$, for the well-specified and misspecified cases and for ML and CV, the histograms of the estimates $\hat{\ell}$, and of the values of the error criteria $D_{n,\hat{\sigma}^2,\hat{\ell}}$ and $E_{n,\hat{\sigma}^2,\hat{\ell}}$. In the well-specified case, the conclusions are in agreement with the main message of previous literature:
Both estimators estimate the true $\ell_0 =3$ with reasonable accuracy and have error criteria that are relatively small. By also considering Table \ref{table:MCresults}, where the averages and standard deviations corresponding to Figures \ref{fig:delta:n100} and \ref{fig:delta:n500} are reported, we observe that ML performs better than CV in all aspects. The estimation error for $\ell$ and the normalized Kullback-Leibler divergence are significantly smaller for ML, while the integrated square prediction error is similar under ML and CV estimation, but nonetheless smaller for ML. 

The conclusions are however radically different in the misspecified case, as is implied by Theorems \ref{theorem:oracle:ML} and \ref{theorem:oracle:CV}. First, the ML estimates of $\ell$ are significantly smaller than in the well-specified case, and can even be equal to the lower-bound $0.2$. The ML estimates of $\sigma^2$ are not reported in Figure \ref{fig:delta:n100} to save space and are close to $1$, so that, approximately, the variance of the observations, as estimated by ML, is close to the true variance of the observations. 
The reason for these small estimates of $\ell$ by ML is the underestimation of the noise variance $\delta_0$, coupled with the large smoothness parameter $\nu_0$. Indeed, there exist pairs of closely spaced observation points for which the corresponding differences of observed values are large compared to $\delta_1$, so that for values of $\ell$ that are larger than those computed by ML, the criterion \eqref{eq:MLtheta} blows up, for all values of $\sigma^2$. 
[Using a value of $\sigma^2$ smaller or approximately equal to $1$ does not counterbalance the damaging impact on \eqref{eq:MLtheta} of these pairs of closely spaced observation points with large observed value differences. Increasing $\sigma^2$ over $1$ is also not optimal for \eqref{eq:MLtheta}, since on a large scale, the observations do have variances close to $1$.] This phenomenon for ML is all the more important when the smoothness parameter $\nu_0$ is large, which is why we choose here the value $\nu_0 = 10$ to illustrate it.
To summarize, ML gives an important weight to pairs of closely spaced observation points with large observation differences and consequently estimates small correlation lengths to explain, so to speak, these observation differences.

On the contrary for CV, if we consider only the predictions $\hat{y}_{\sigma^2,\ell}(t)$ at new points $t$ and the LOO predictions $\hat{y}_{i,\sigma^2,\ell}$, with $(\ell,\sigma^2) \in \Theta$, then the situation is virtually the same as if the model was well-specified. Indeed, the covariance matrices and vectors obtained from $\sigma^2$, $\ell$ and $\delta_0$ are equal to $\delta_0 / \delta_1$ time those obtained from $\sigma^2 \delta_1/\delta_0$, $\ell$ and $\delta_1$, so that the corresponding predictions are identical. Hence, in Figure \ref{fig:delta:n100}, the empirical distribution of $\hat{\ell}_{CV}$ is approximately the same between the well-specified and misspecified cases. In the misspecified case, we find that the empirical distribution of $\hat{\sigma}^2_{CV}$ (not reported in Figure \ref{fig:delta:n100} to save space) is $\delta_1 / \delta_0$ time that of the well-specified case. Of course, although the CV predictions are not damaged by the misspecified $\delta_1$, the CV estimations of other characteristics of the conditional distribution of $Y$ given the observed data are damaged. 
[For example the confidence intervals for $Y(t)$ obtained from the CV estimates are significantly too small.]

The histograms of $E_{n,\hat{\sigma}^2,\hat{\ell}}$ and $D_{n,\hat{\sigma}^2,\hat{\ell}}$ for ML and CV in Figure \ref{fig:delta:n100} confirm the discussion on the estimated parameters. For ML which estimates small correlation lengths, the error criteria $E_{n,\hat{\sigma}^2_{ML},\hat{\ell}_{ML}}$ are significantly larger than in the well-specified case (by an approximate factor of $3$ on average as seen in Table \ref{table:MCresults}). The error criteria $D_{n,\hat{\sigma}^2_{ML},\hat{\ell}_{ML}}$ also increase and become larger than these of both ML and CV in the well-specified case. For CV, the error criteria $E_{n,\hat{\sigma}^2_{CV},\hat{\ell}_{CV}}$ are, as discussed, as small as in the well-specified case and approximately $3$ times smaller on average than for ML, illustrating Theorem \ref{theorem:oracle:CV}. However, the error criteria $D_{n,\hat{\sigma}^2_{CV},\hat{\ell}_{CV}}$ are $3$ times larger for CV than for ML, in the misspecified case, illustrating Theorem \ref{theorem:oracle:ML}.

Finally, in Figure \ref{fig:delta:n500}, the settings are the same as for Figure \ref{fig:delta:n100} but for $n=500$. The relative differences between ML and CV are the same as for $n=100$. The estimates of $\ell$ under ML and CV have less variance than for $n=100$, and their histograms are approximately unimodal and symmetric.
Finally, for ML and CV in the misspecified case, $E_{n,\hat{\sigma}^2,\hat{\ell}}$ and $D_{n,\hat{\sigma}^2,\hat{\ell}}$ keep the same averages between $n=100$ and $n=500$. In the well-specified case, $E_{n,\hat{\sigma}^2,\hat{\ell}}$ also keeps the same average, while $D_{n,\hat{\sigma}^2,\hat{\ell}}$ becomes very small.
This is because $D_{n,\sigma_0^2,\ell_0} =0$ in the well-specified case, while $E_{n,\sigma_0^2,\ell_0}$ is non-zero and should not vanish to $0$ as $n \to \infty$, since the density of observation points in the prediction domain is constant with $n$.

\begin{figure} 
\centering
\begin{tabular}{ccc}
\includegraphics[height=4cm,width=4.5cm]{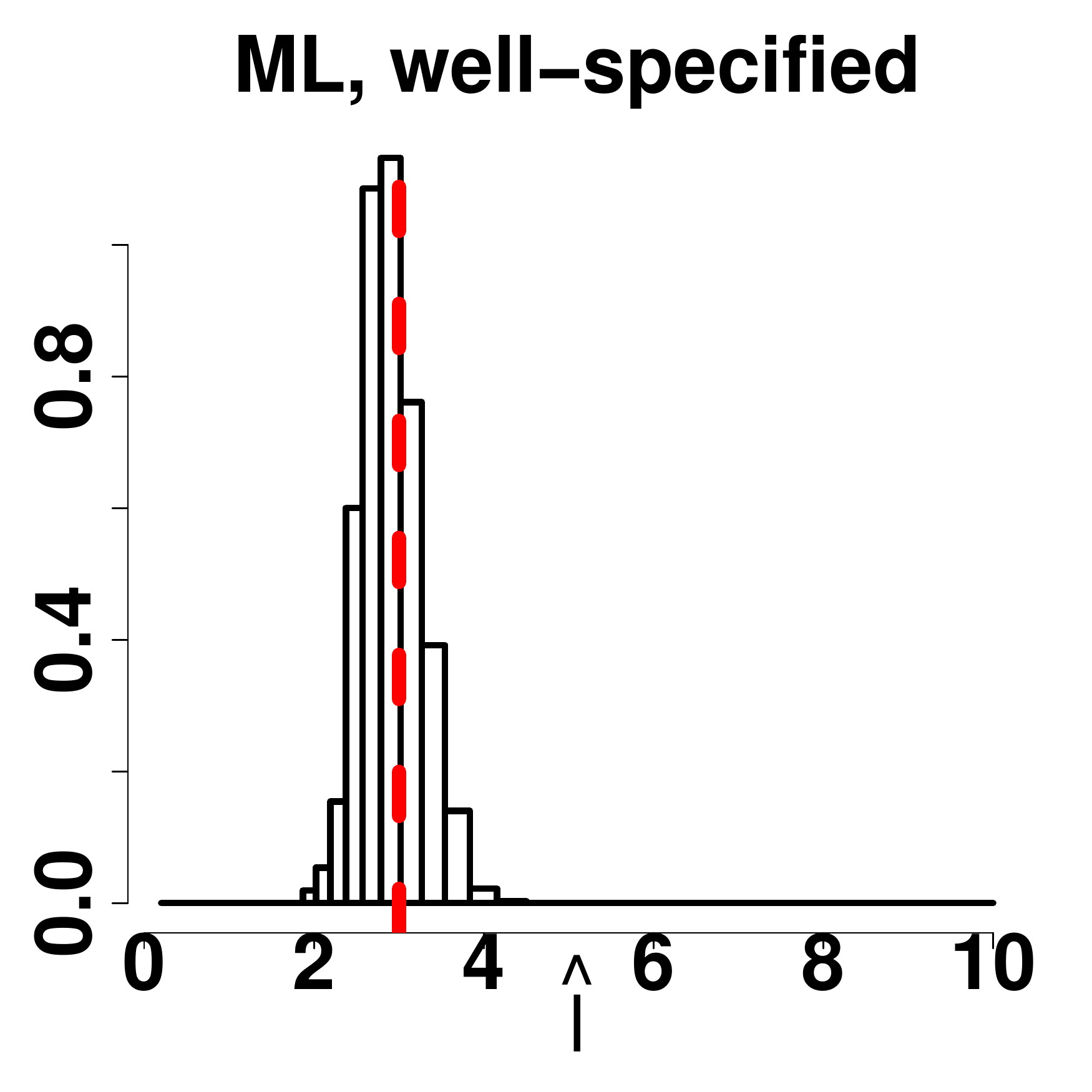}
&
\includegraphics[height=4cm,width=4.5cm]{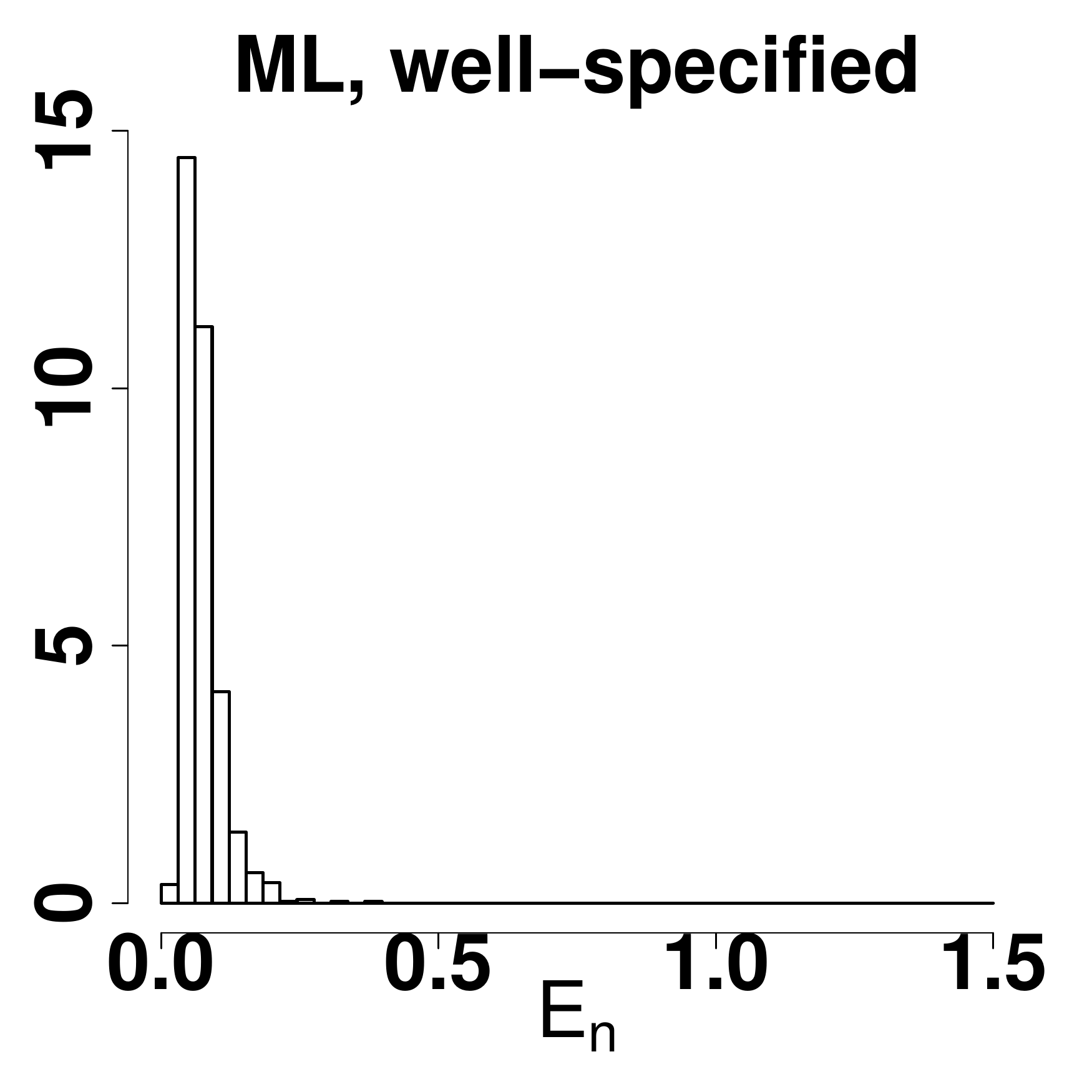}
&
\includegraphics[height=4cm,width=4.5cm]{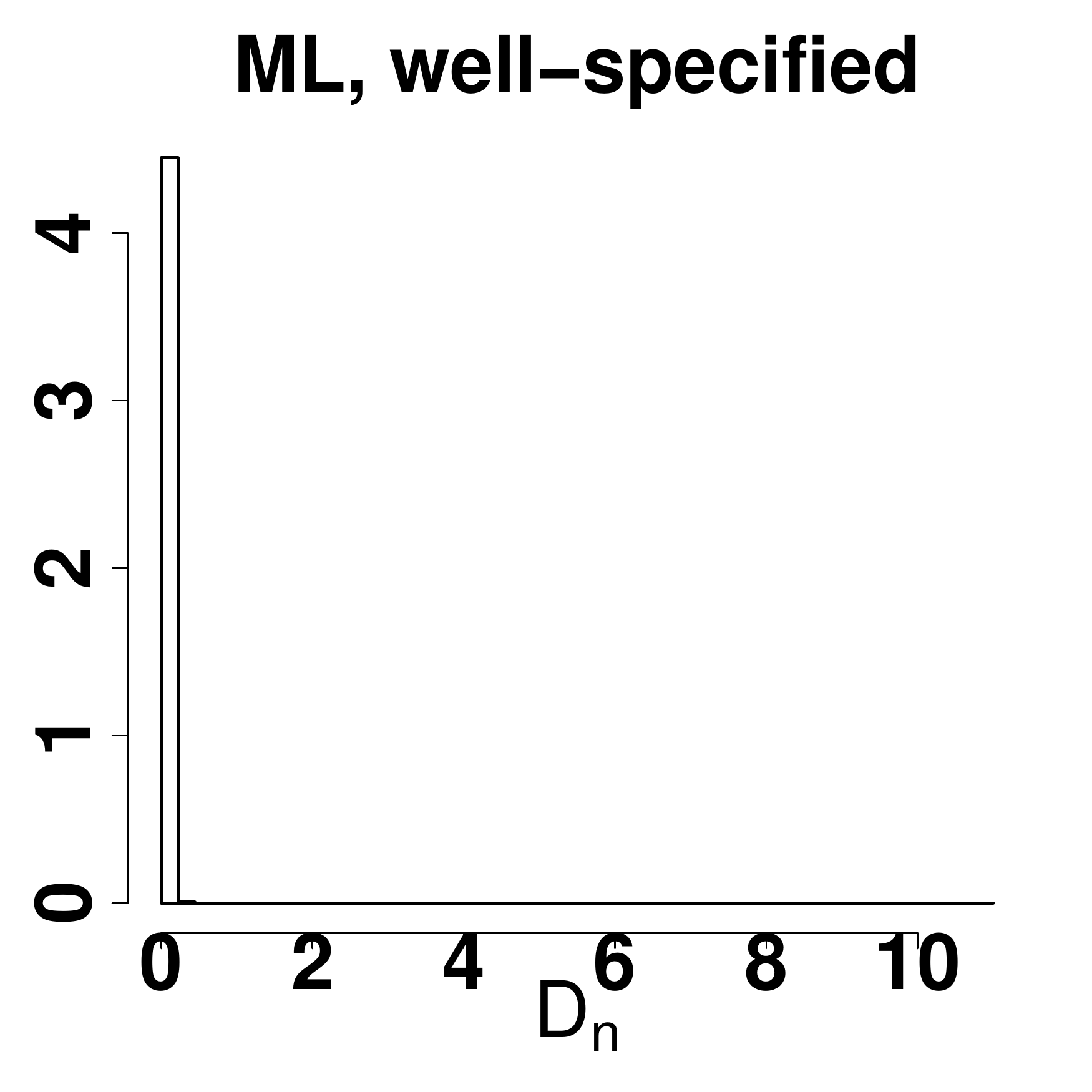}
\\
\includegraphics[height=4cm,width=4.5cm]{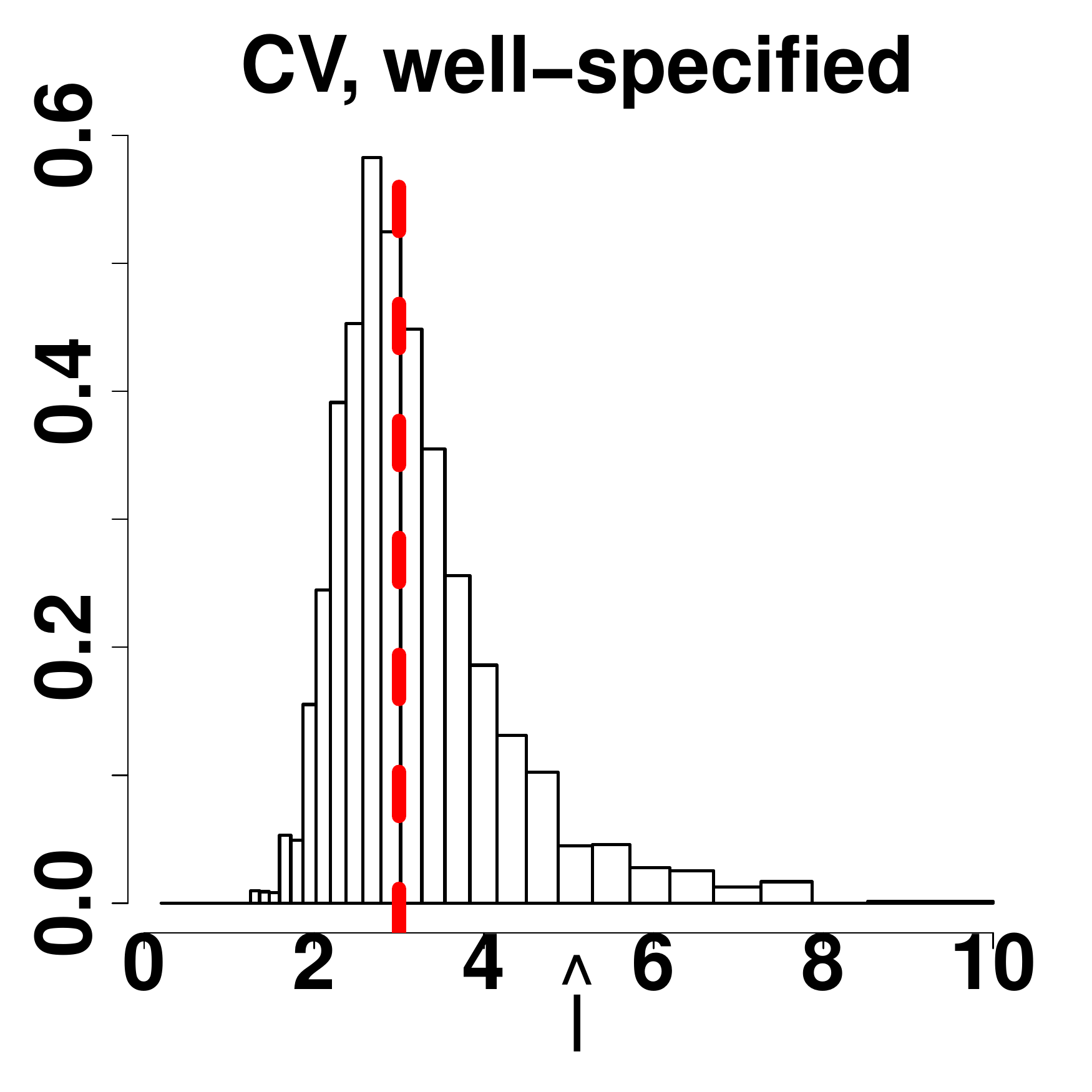}
&
\includegraphics[height=4cm,width=4.5cm]{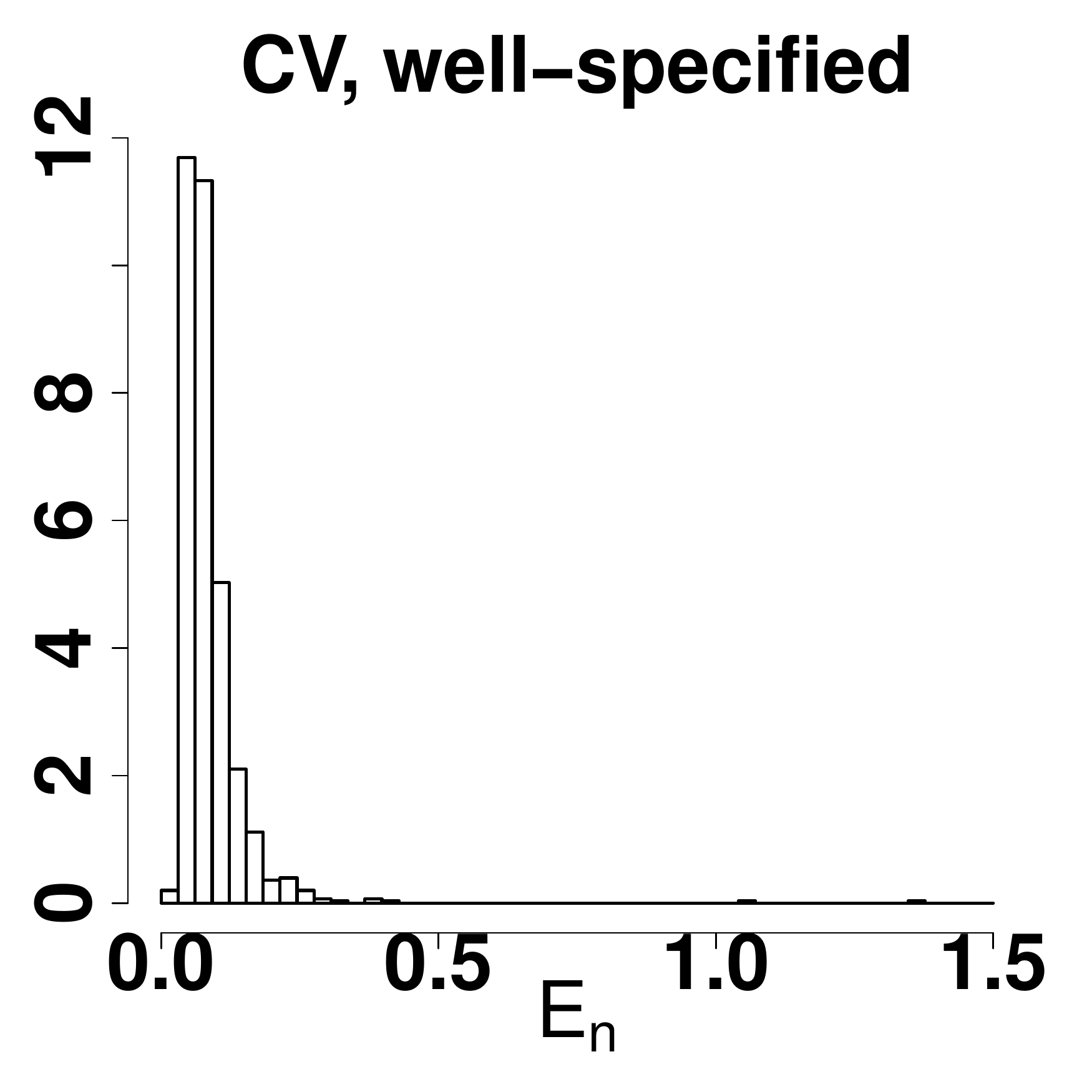}
&
\includegraphics[height=4cm,width=4.5cm]{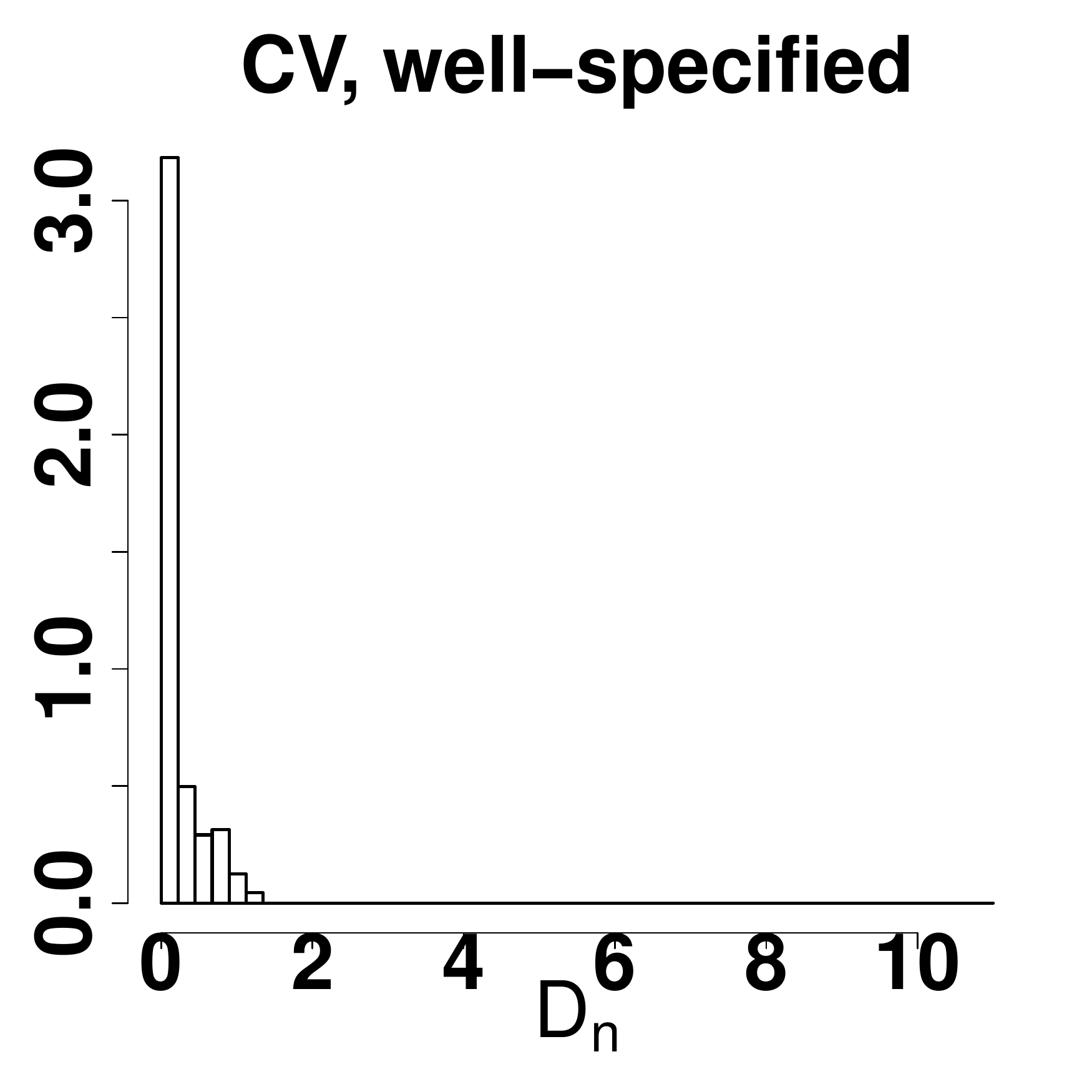}
\\
\includegraphics[height=4cm,width=4.5cm]{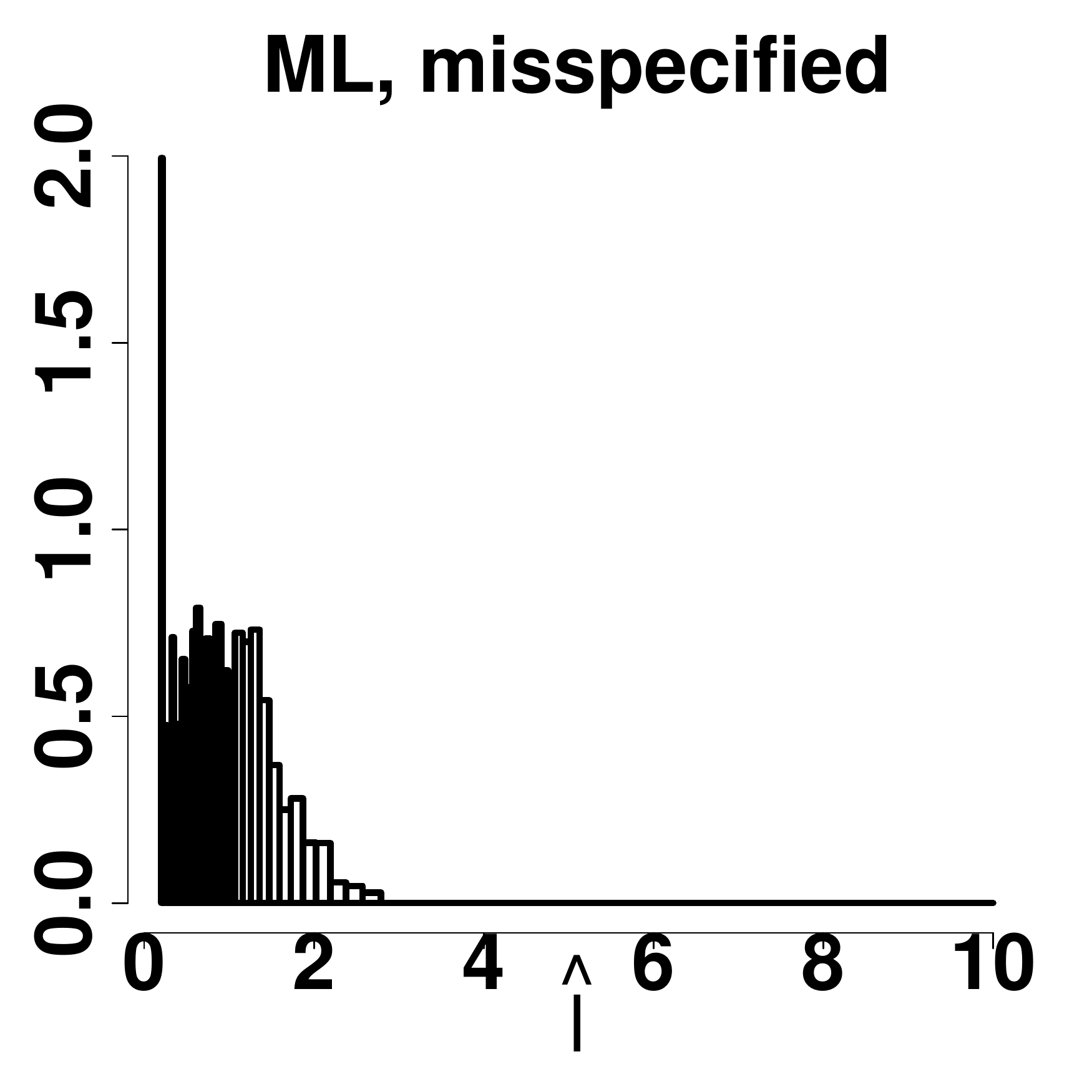}
&
\includegraphics[height=4cm,width=4.5cm]{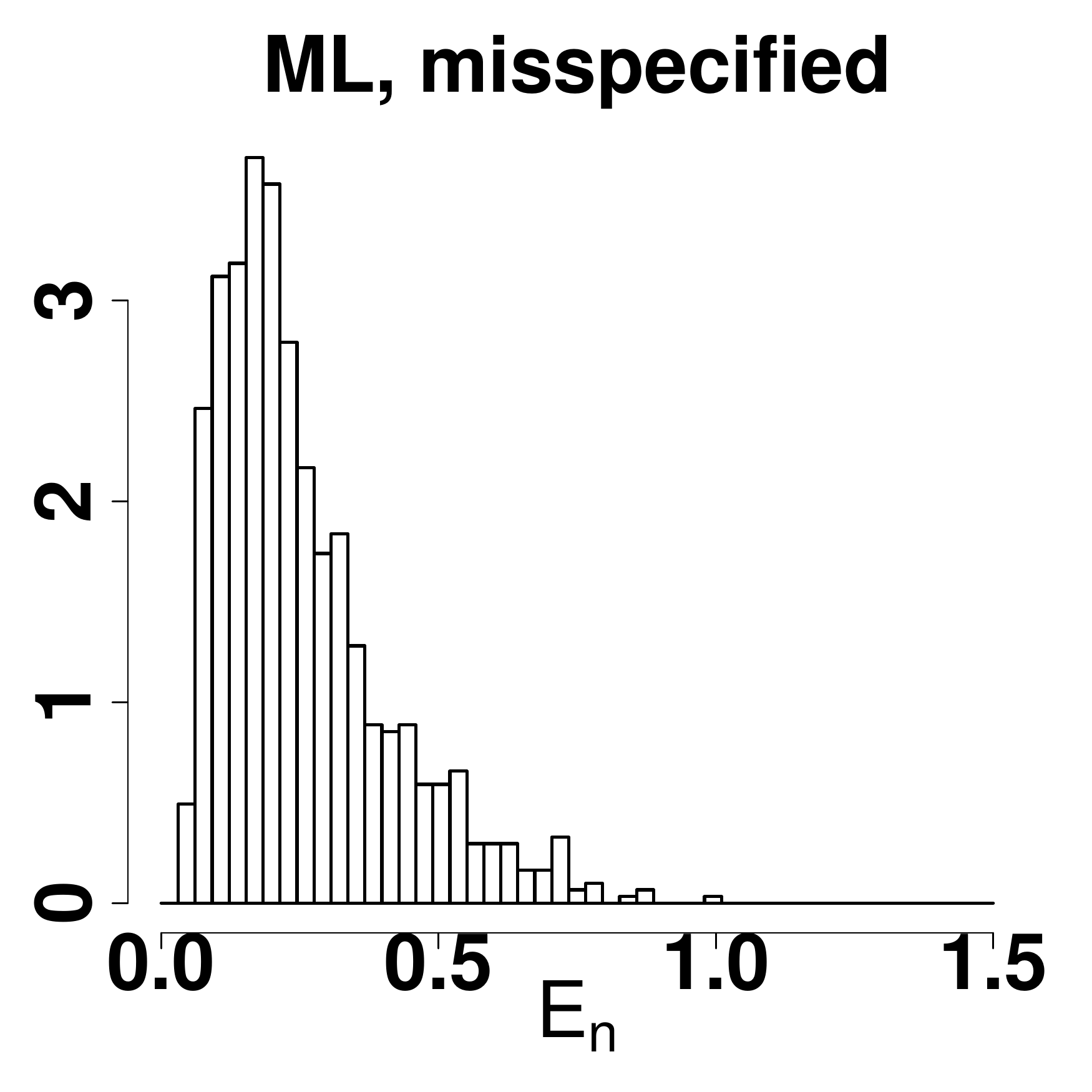}
&
\includegraphics[height=4cm,width=4.5cm]{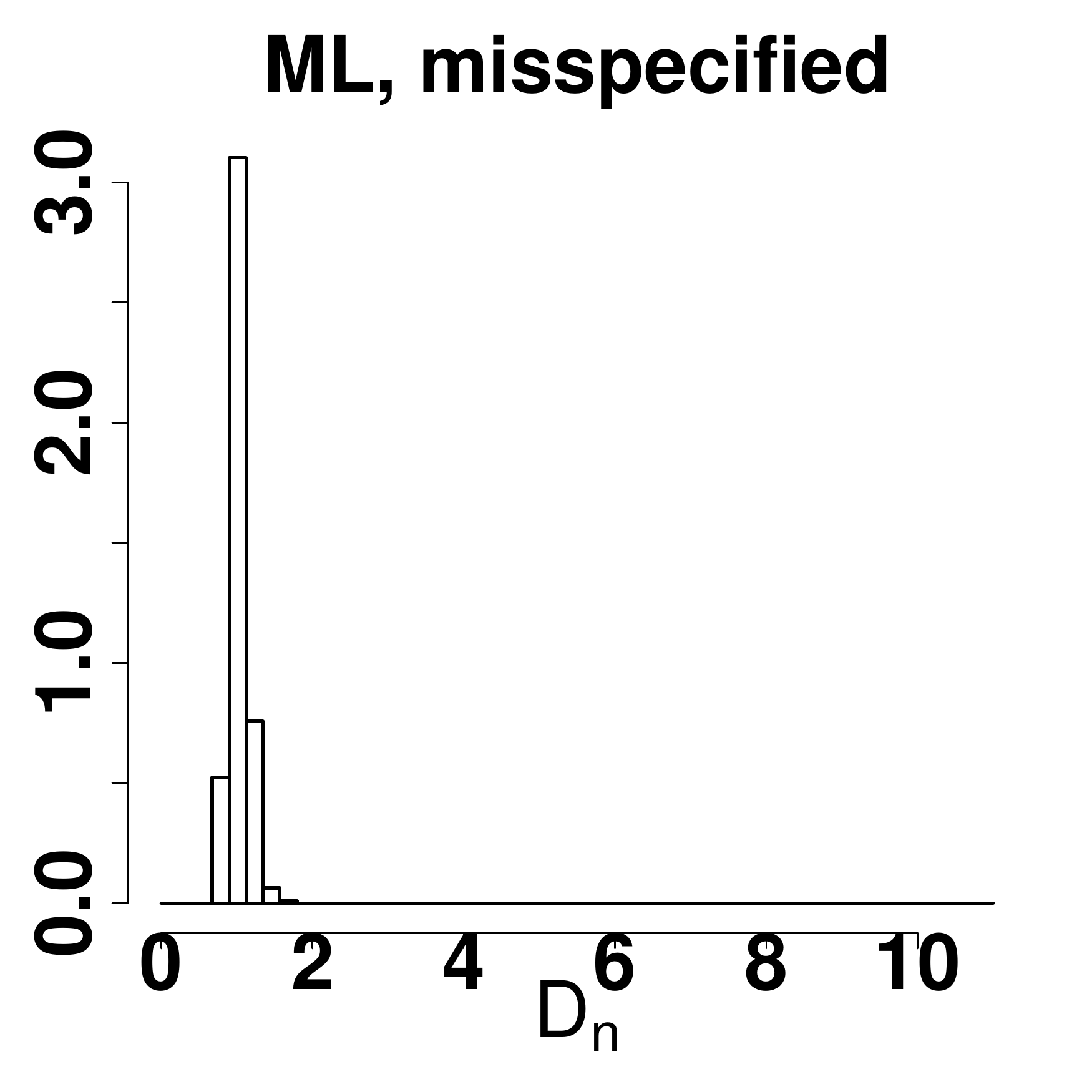}
\\
\includegraphics[height=4cm,width=4.5cm]{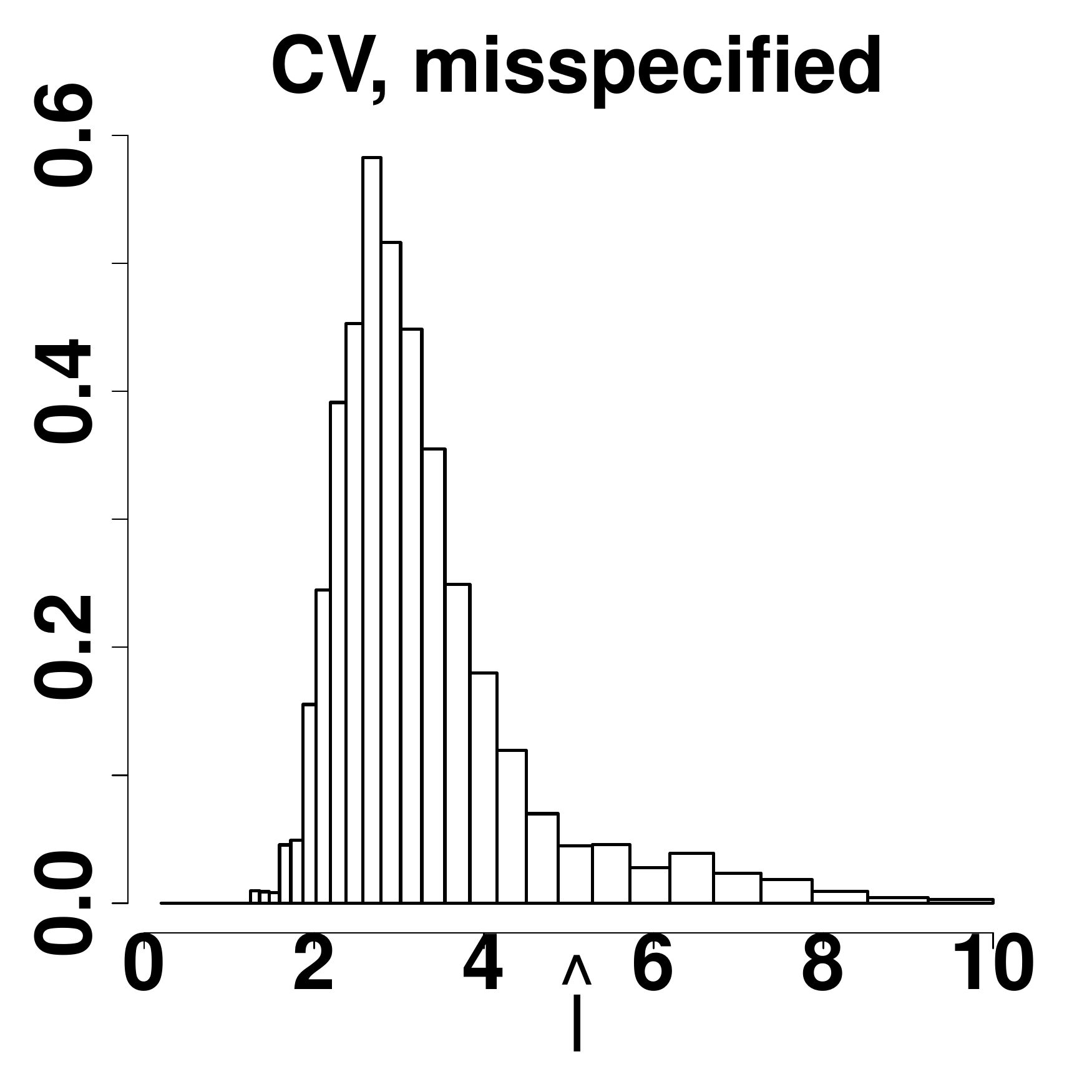}
&
\includegraphics[height=4cm,width=4.5cm]{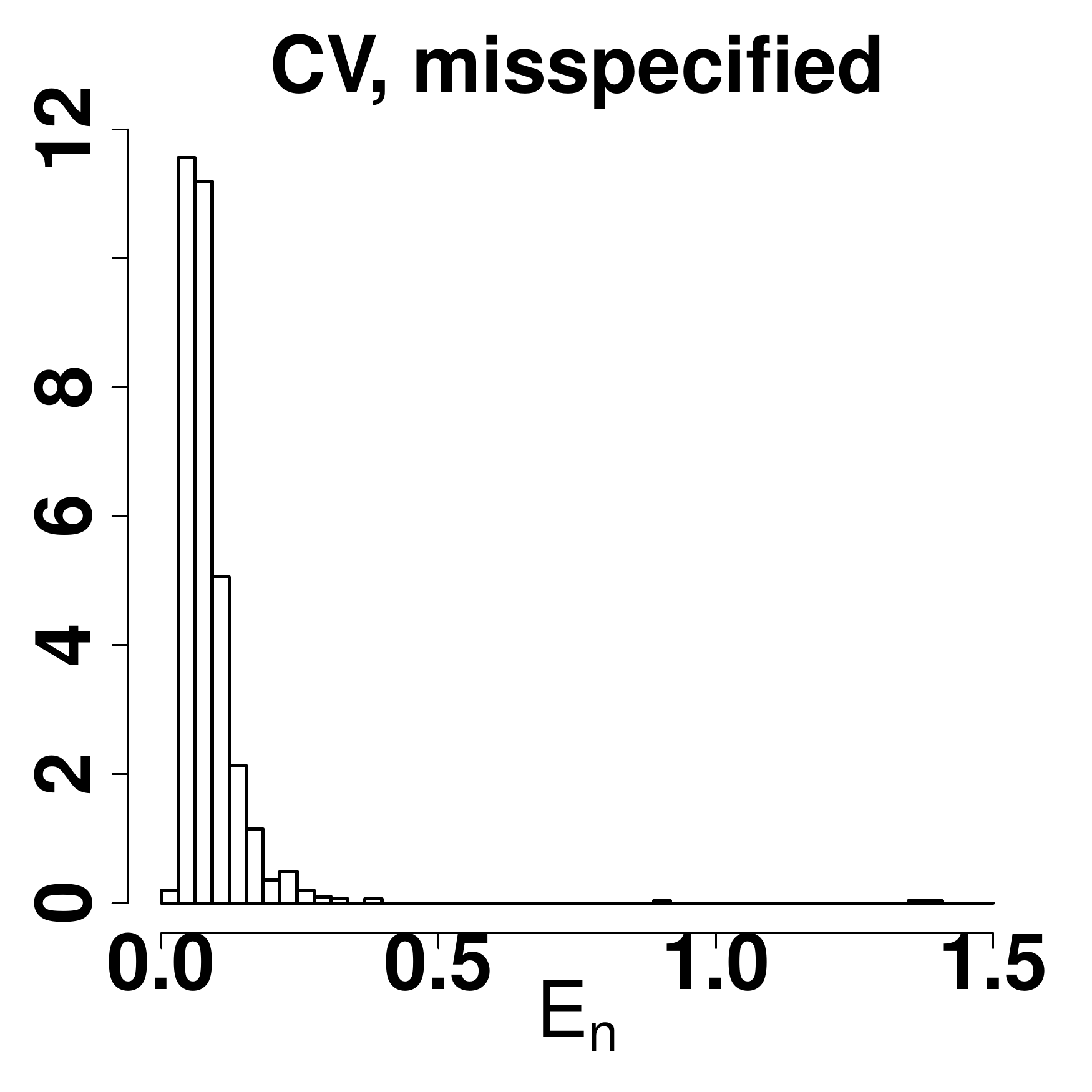}
&
\includegraphics[height=4cm,width=4.5cm]{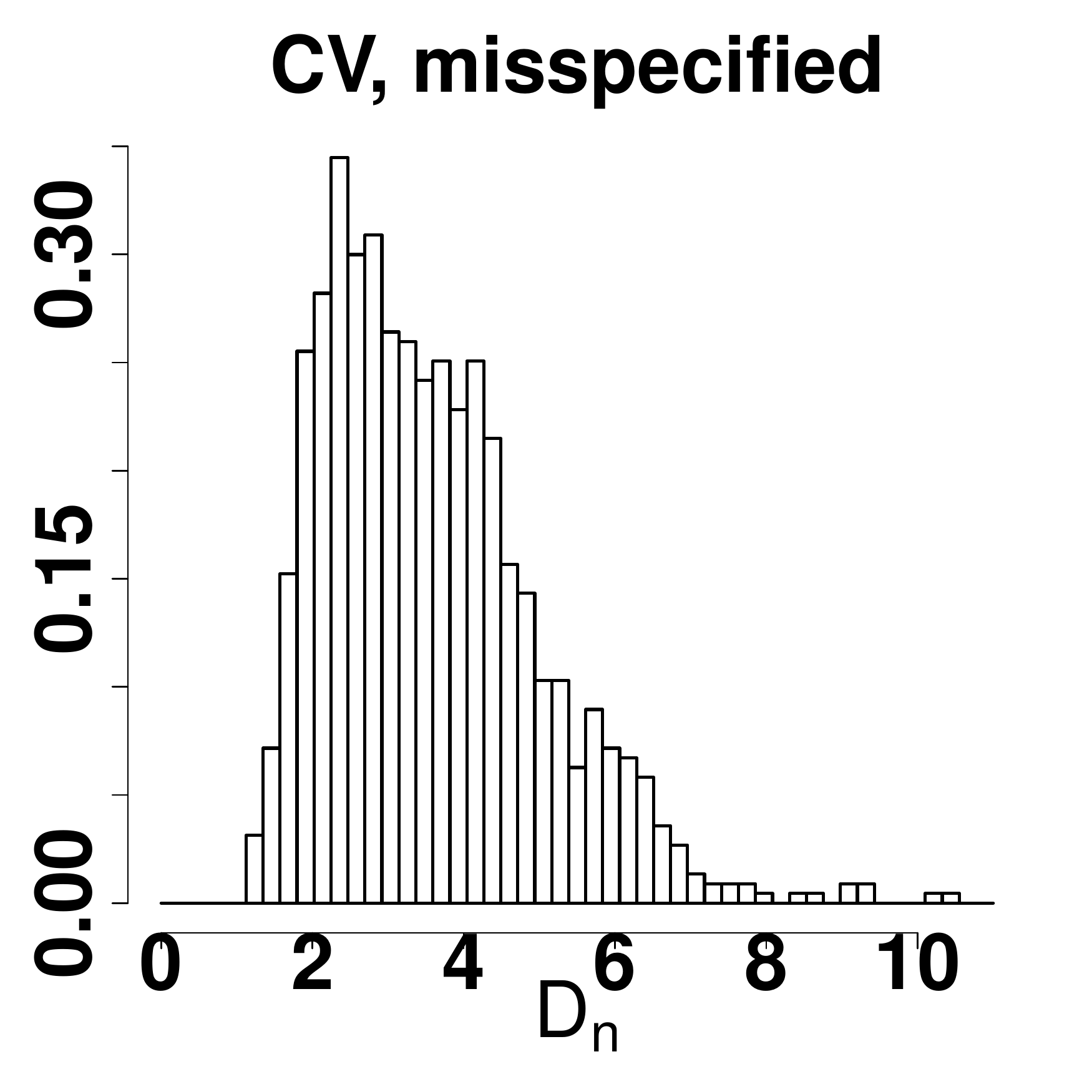}
\end{tabular}
\caption{Simulation of $N=1000$ independent realizations of $n = 100$ i.i.d. observation points with uniform distribution on $[0,n]$, of the Gaussian process $Y$ on $[0,n]$ with Mat\'ern ($\sigma_0^2=1,\ell_0 =3,\nu_0=10$) covariance function, and of the corresponding observation vector with noise variance $\delta_0 = 0.25^2$. For each simulation, $\nu_0$ is known, the noise variance is fixed to $\delta_1 = \delta_0$ (well-specified case) or $\delta_1 = 0.1^2 \neq \delta_0$ (misspecified case), $\sigma^2$ and $\ell$ are estimated by ML and CV and the corresponding error criteria $D_{n,\hat{\sigma}^2,\hat{\ell}}$ (normalized Kullback-Leibler divergence) and $E_{n,\hat{\sigma}^2,\hat{\ell}}$ (integrated square prediction error) are computed. The histograms of the $N$ estimates of $\ell$ and of the $N$ corresponding values of the error criteria are reported for ML and CV and in the well-specified and misspecified cases. In the well-specified case, the estimates are on average reasonably close to the true values, the error criteria are reasonably small and ML performs better than CV in all aspects. In the misspecified case, the ML and CV estimates of the correlation lengths are significantly different, ML performs better than CV for $D_{n,\hat{\sigma}^2,\hat{\ell}}$ and CV performs better than ML for $E_{n,\hat{\sigma}^2,\hat{\ell}}$.}
\label{fig:delta:n100}
\end{figure}

\begin{figure} 
\centering
\begin{tabular}{ccc}
\includegraphics[height=4cm,width=4.5cm]{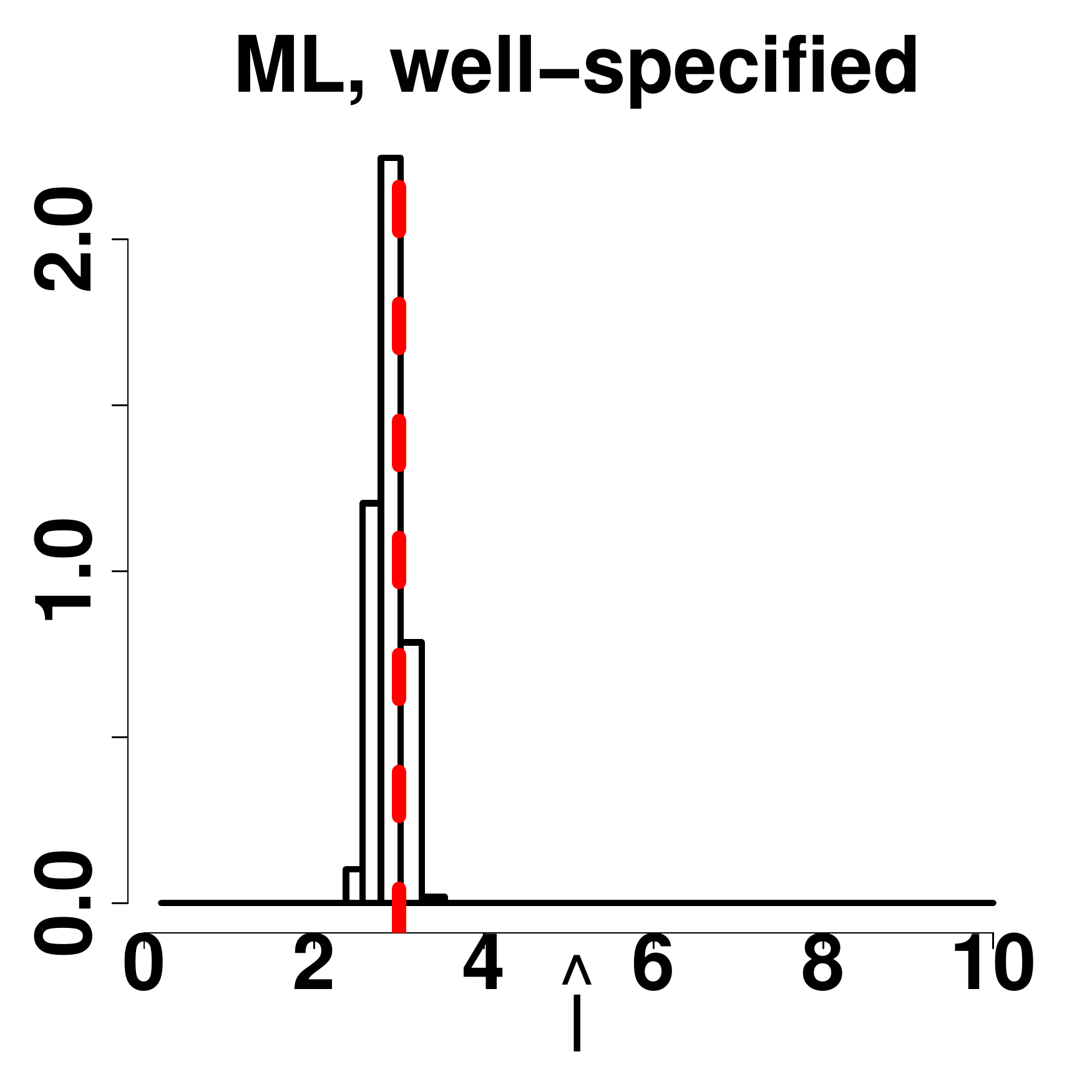}
&
\includegraphics[height=4cm,width=4.5cm]{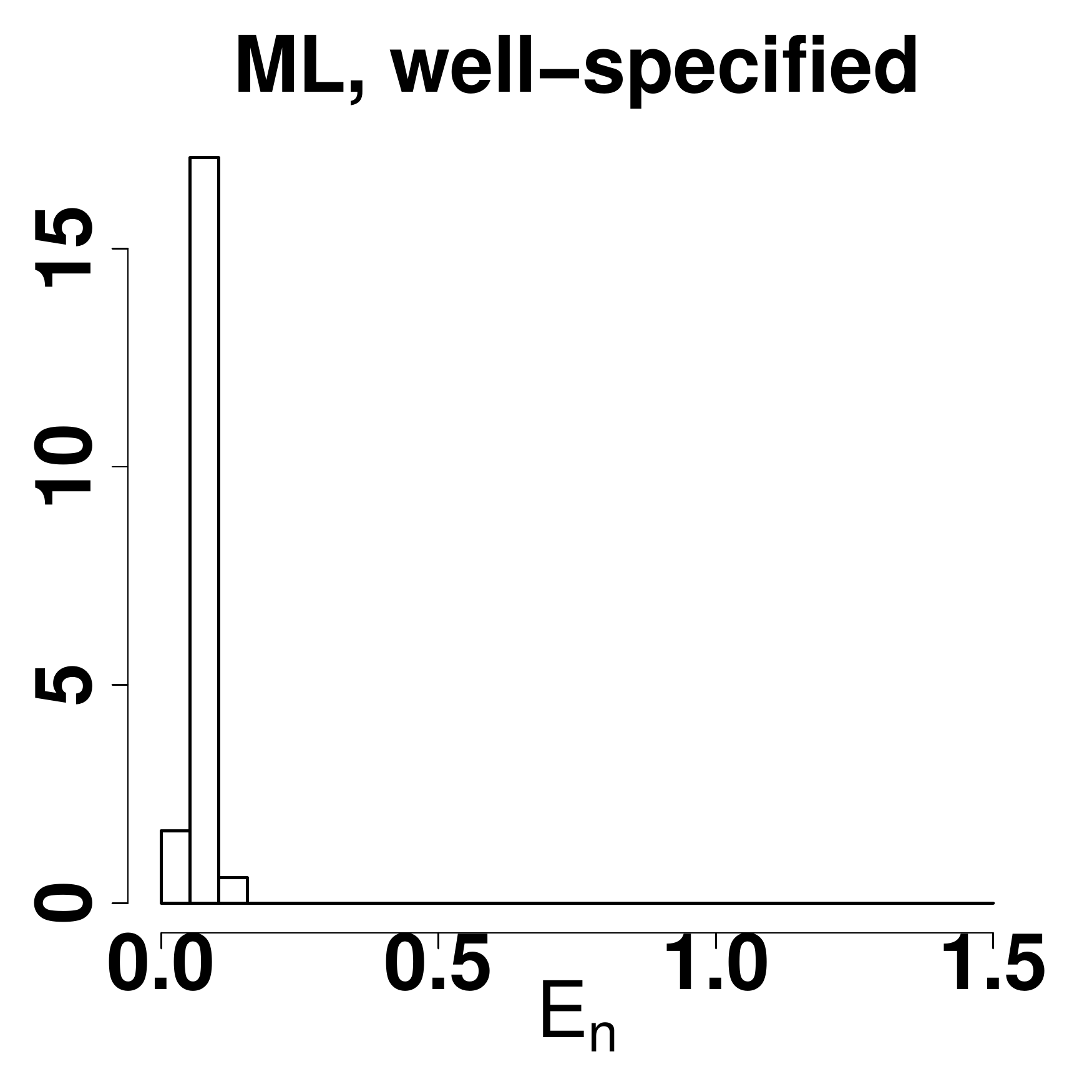}
&
\includegraphics[height=4cm,width=4.5cm]{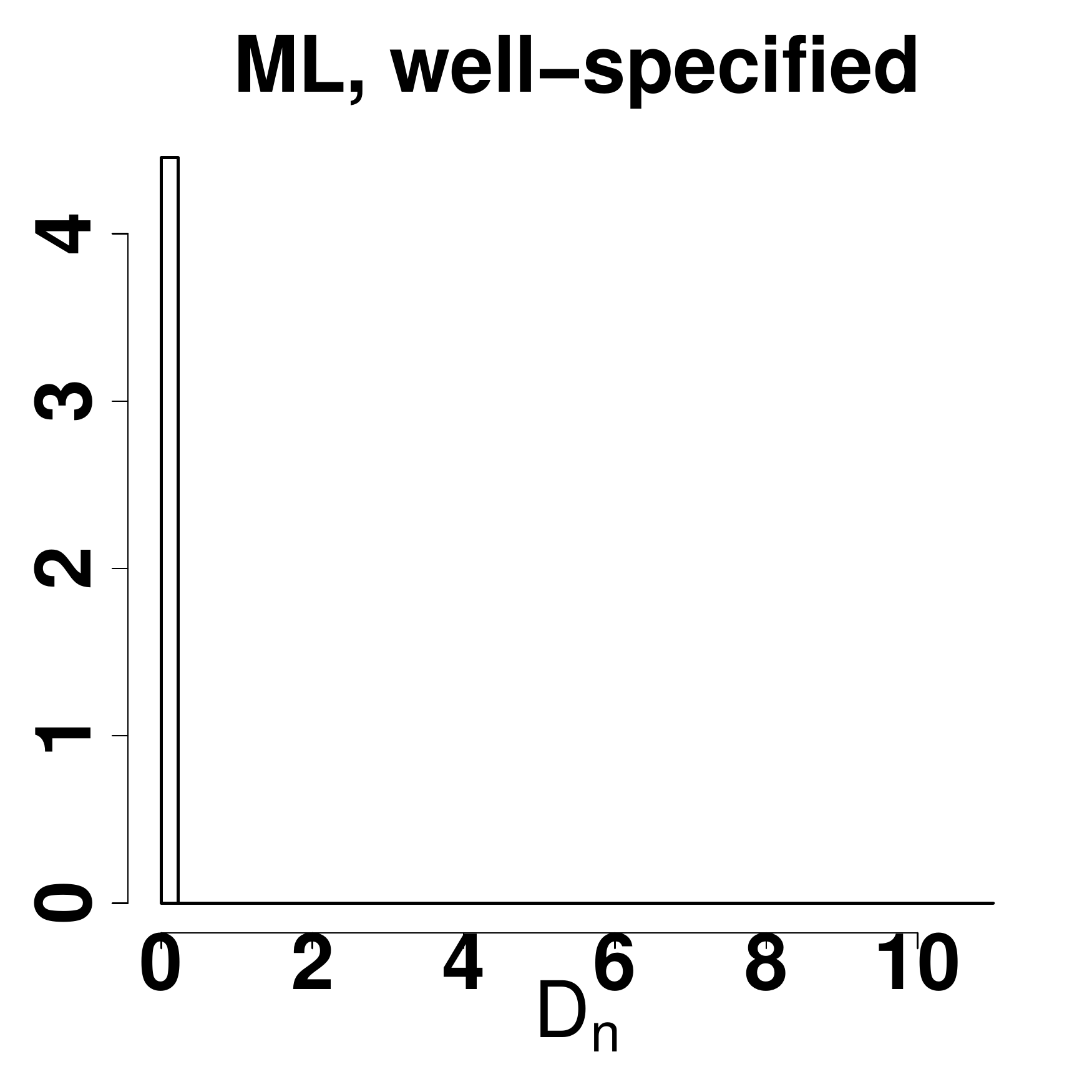}
\\
\includegraphics[height=4cm,width=4.5cm]{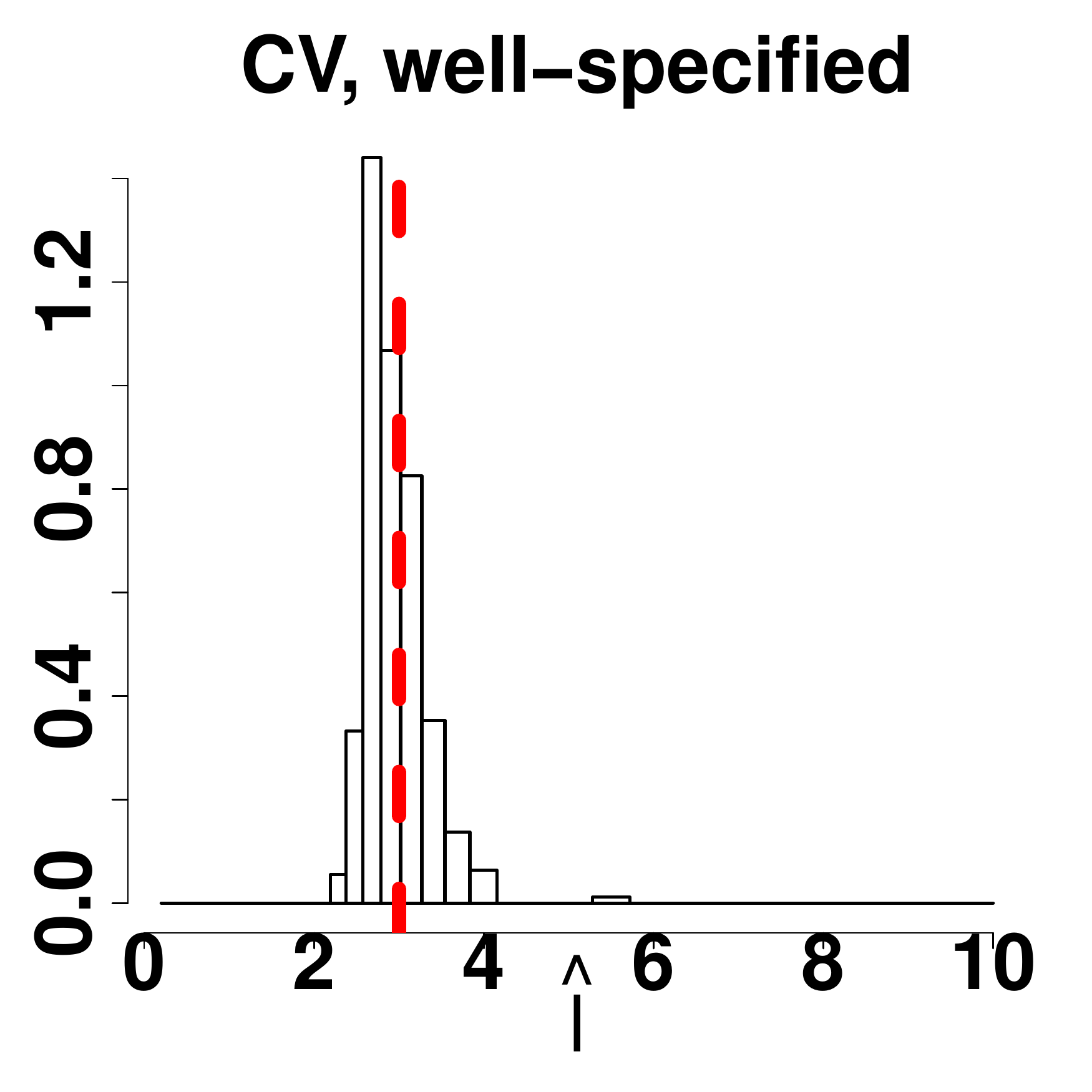}
&
\includegraphics[height=4cm,width=4.5cm]{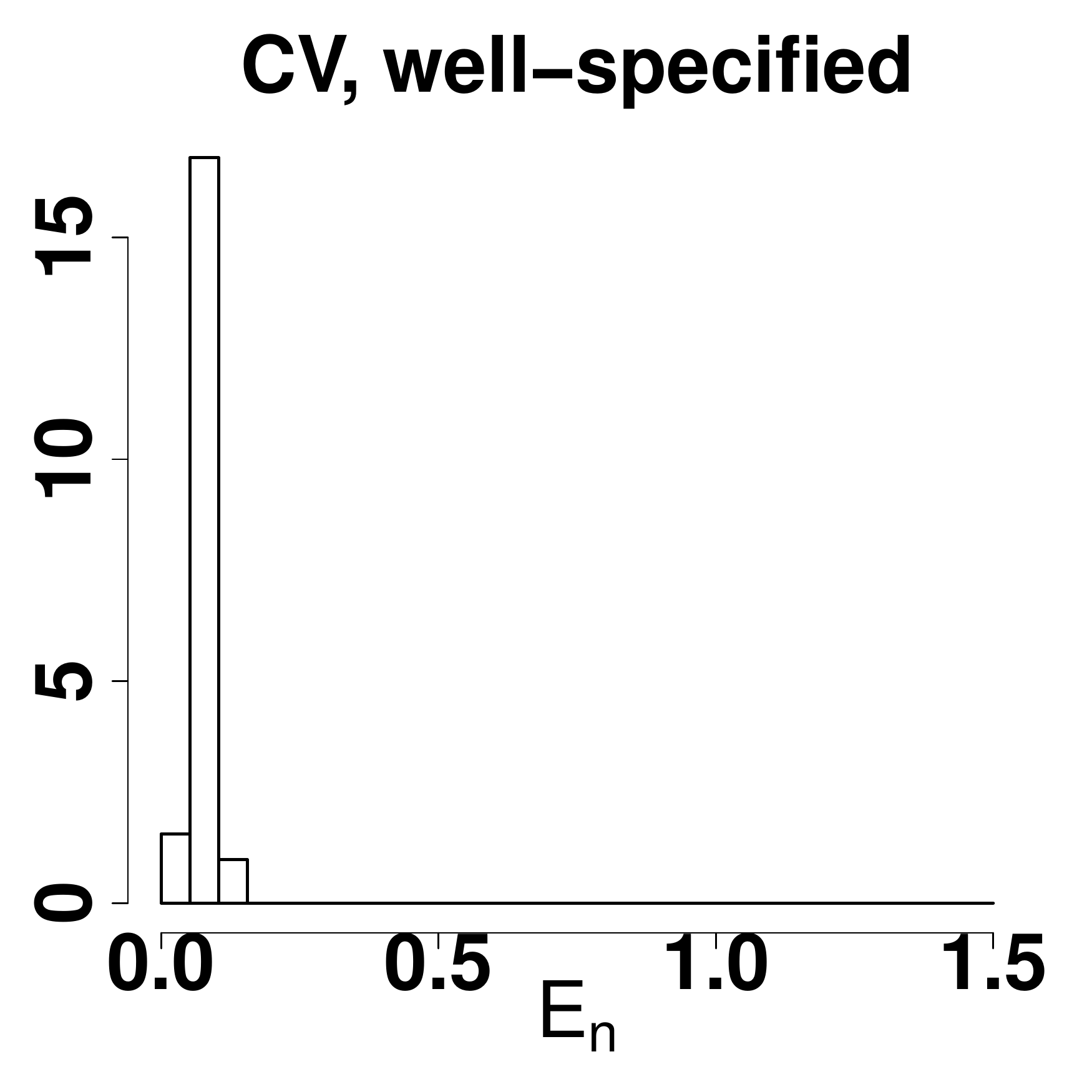}
&
\includegraphics[height=4cm,width=4.5cm]{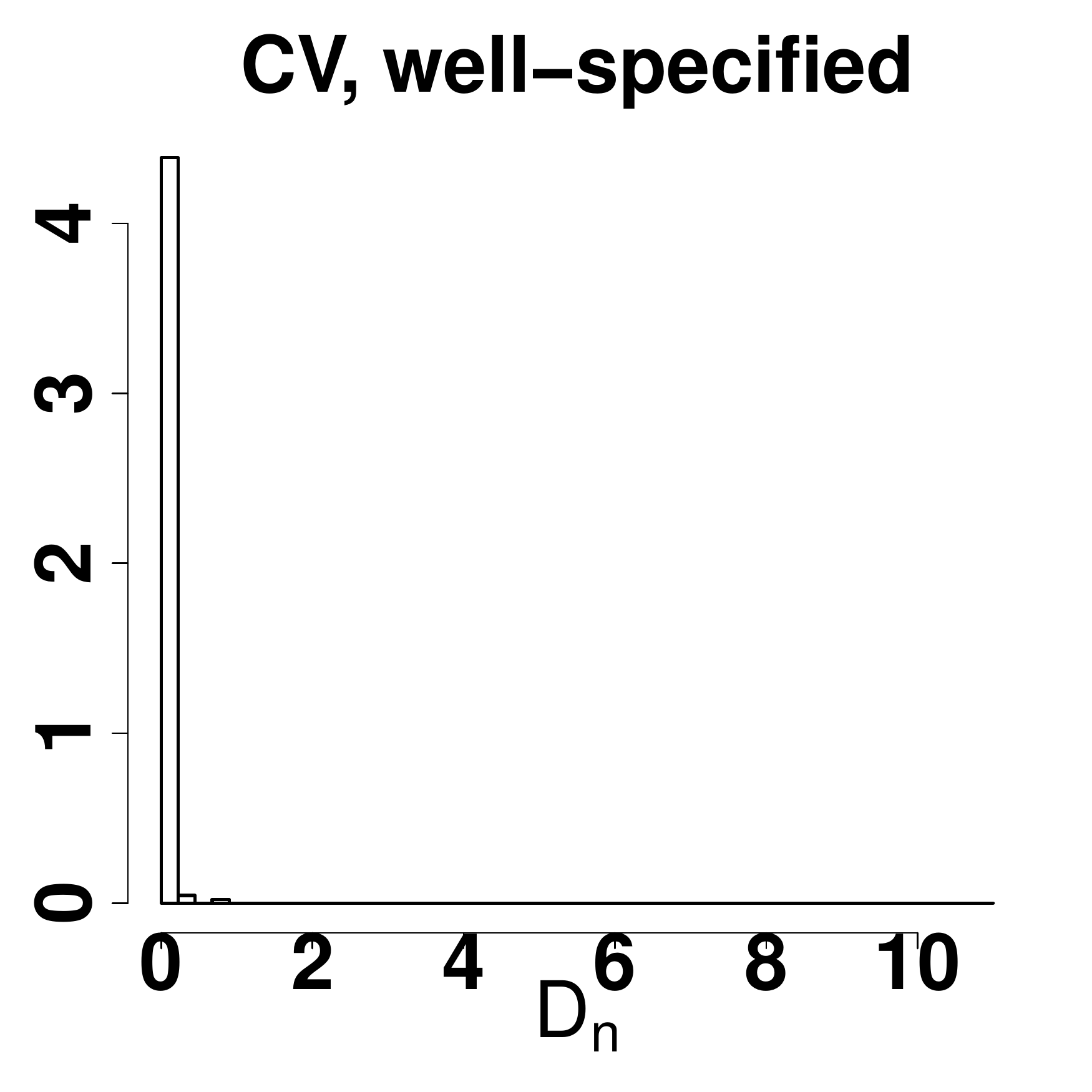}
\\
\includegraphics[height=4cm,width=4.5cm]{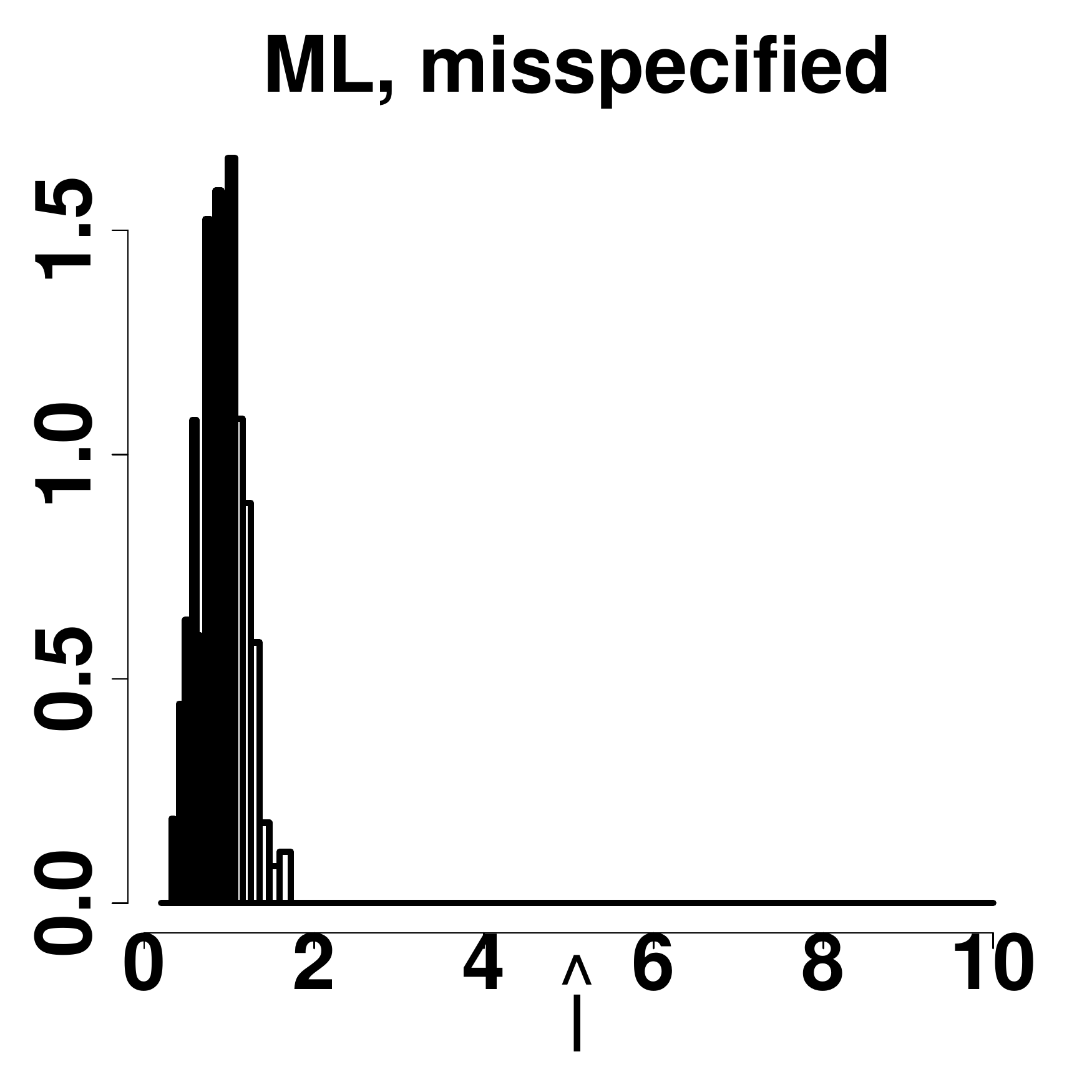}
&
\includegraphics[height=4cm,width=4.5cm]{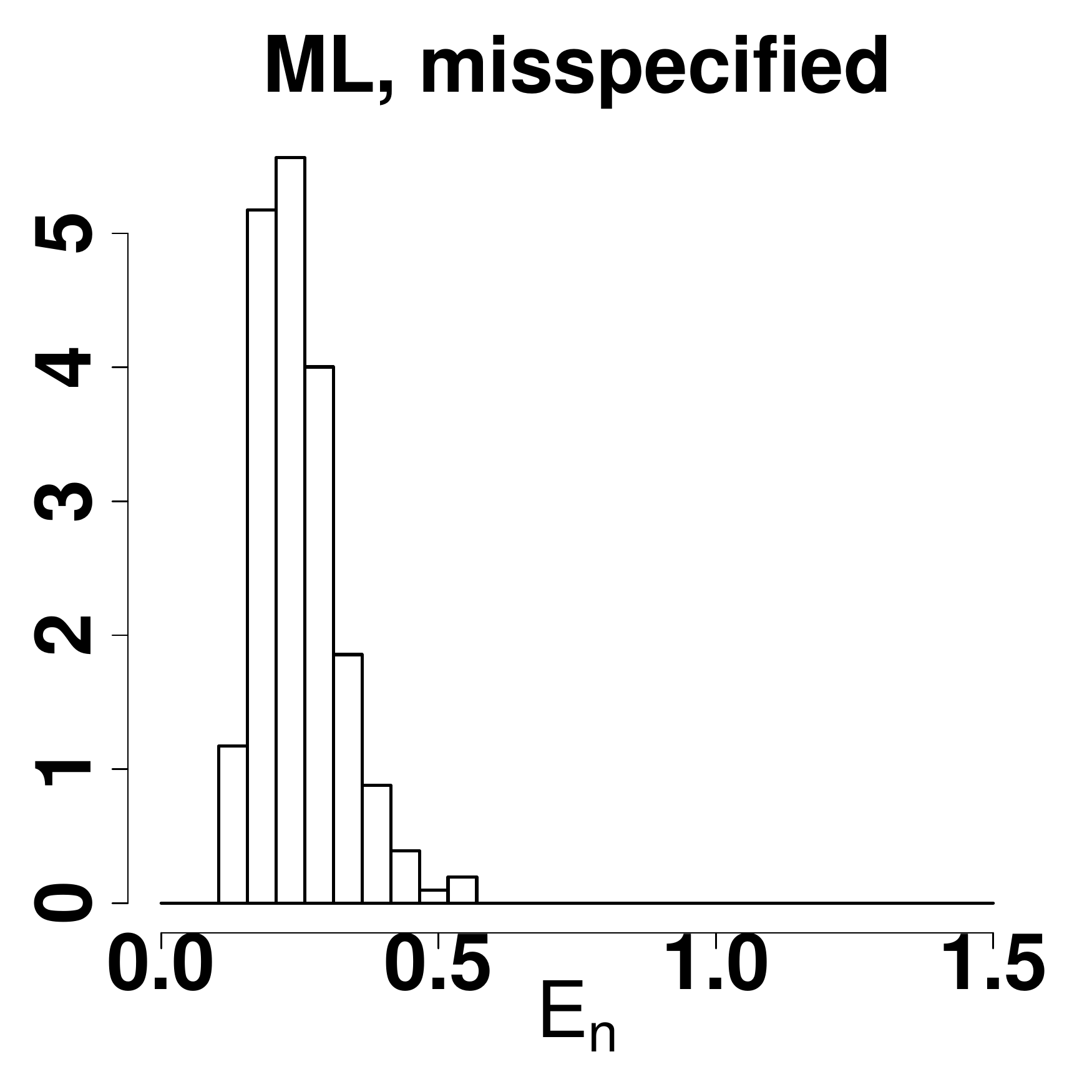}
&
\includegraphics[height=4cm,width=4.5cm]{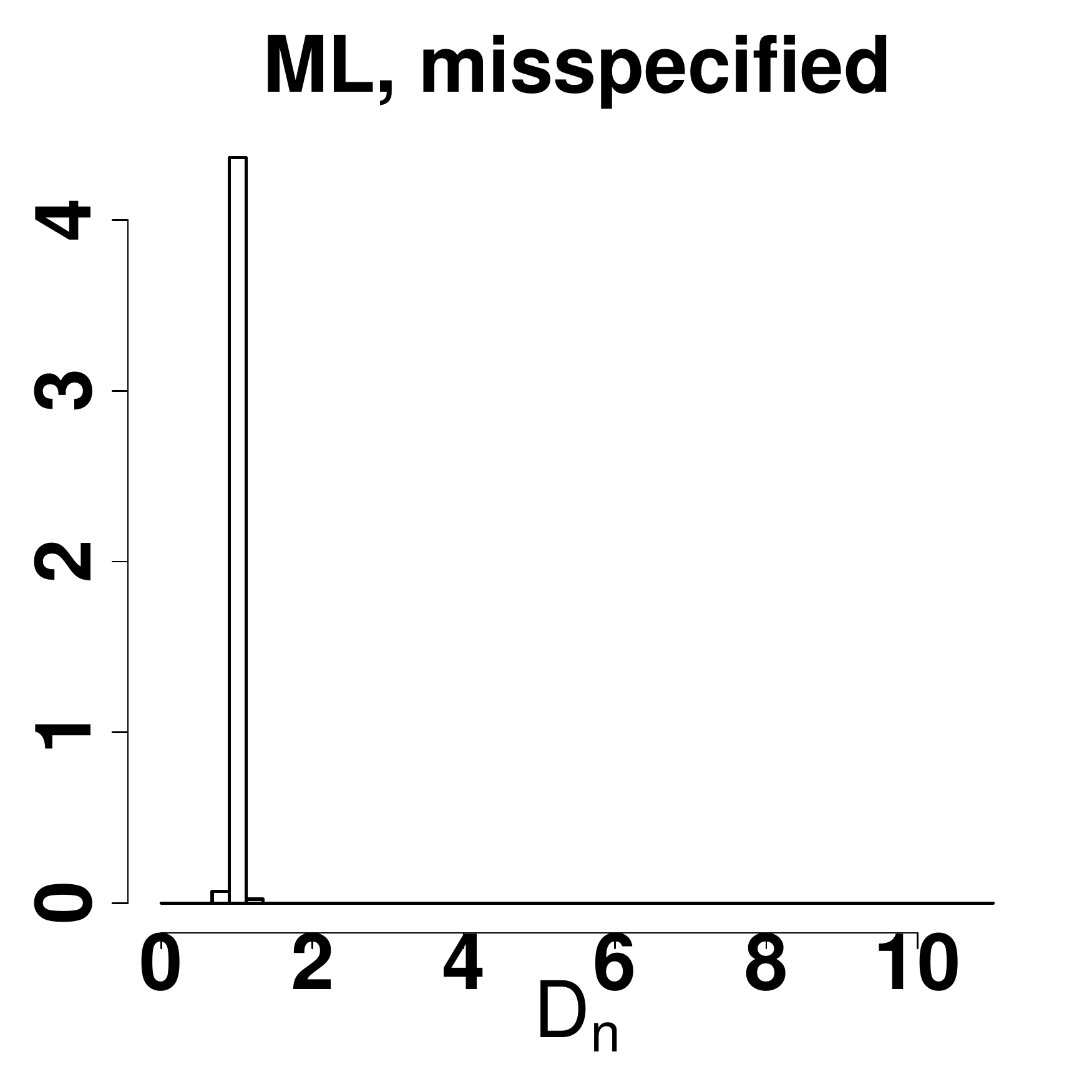}
\\
\includegraphics[height=4cm,width=4.5cm]{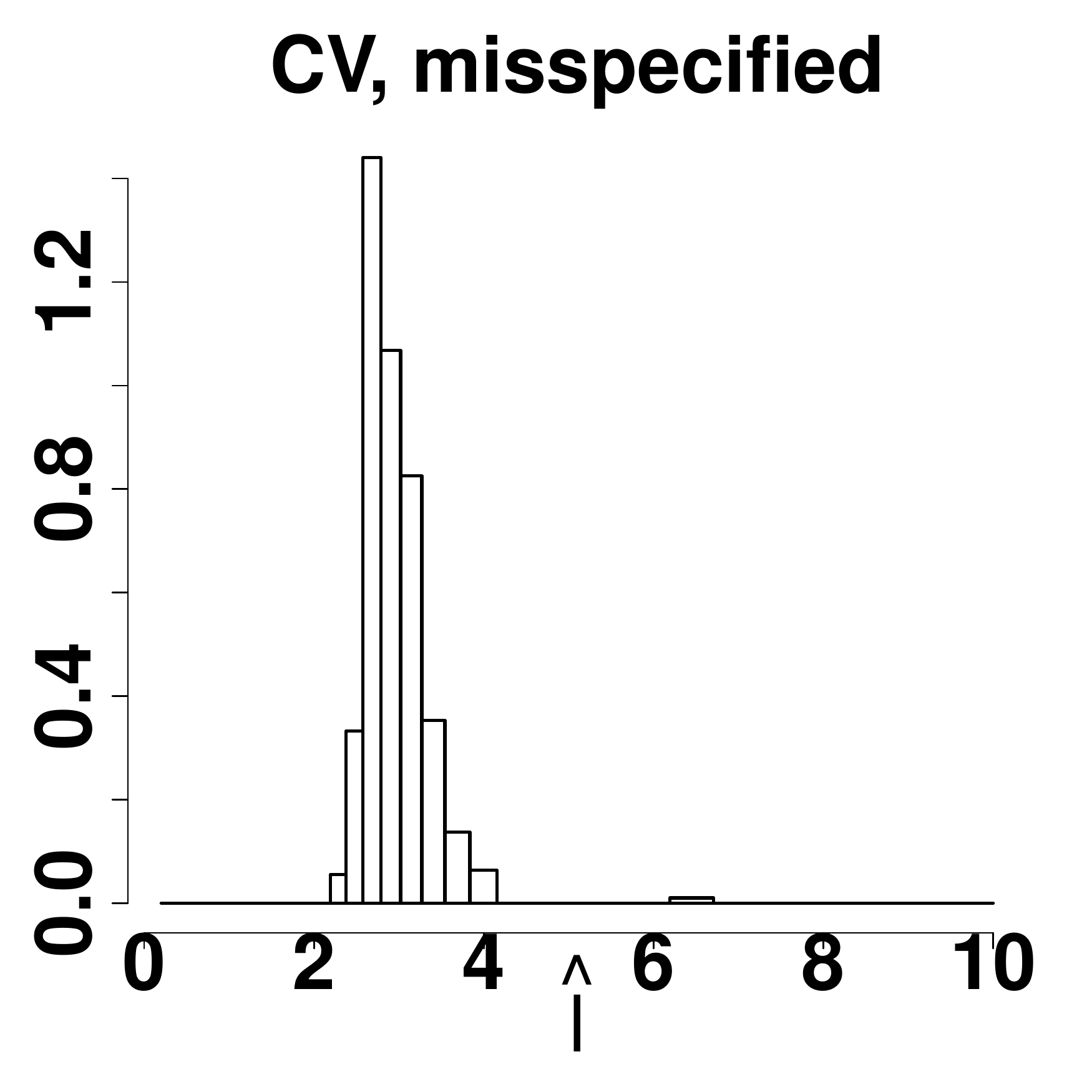}
&
\includegraphics[height=4cm,width=4.5cm]{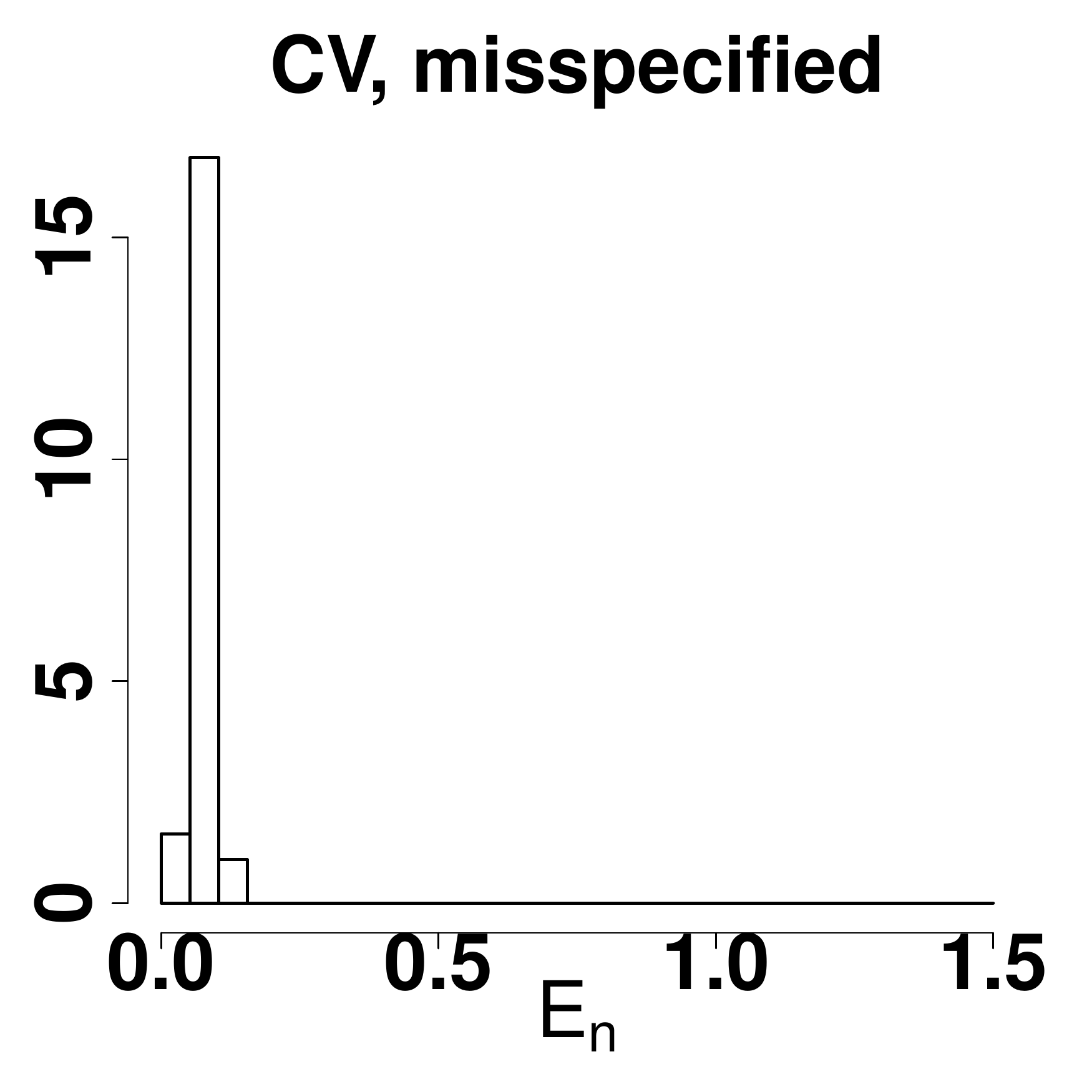}
&
\includegraphics[height=4cm,width=4.5cm]{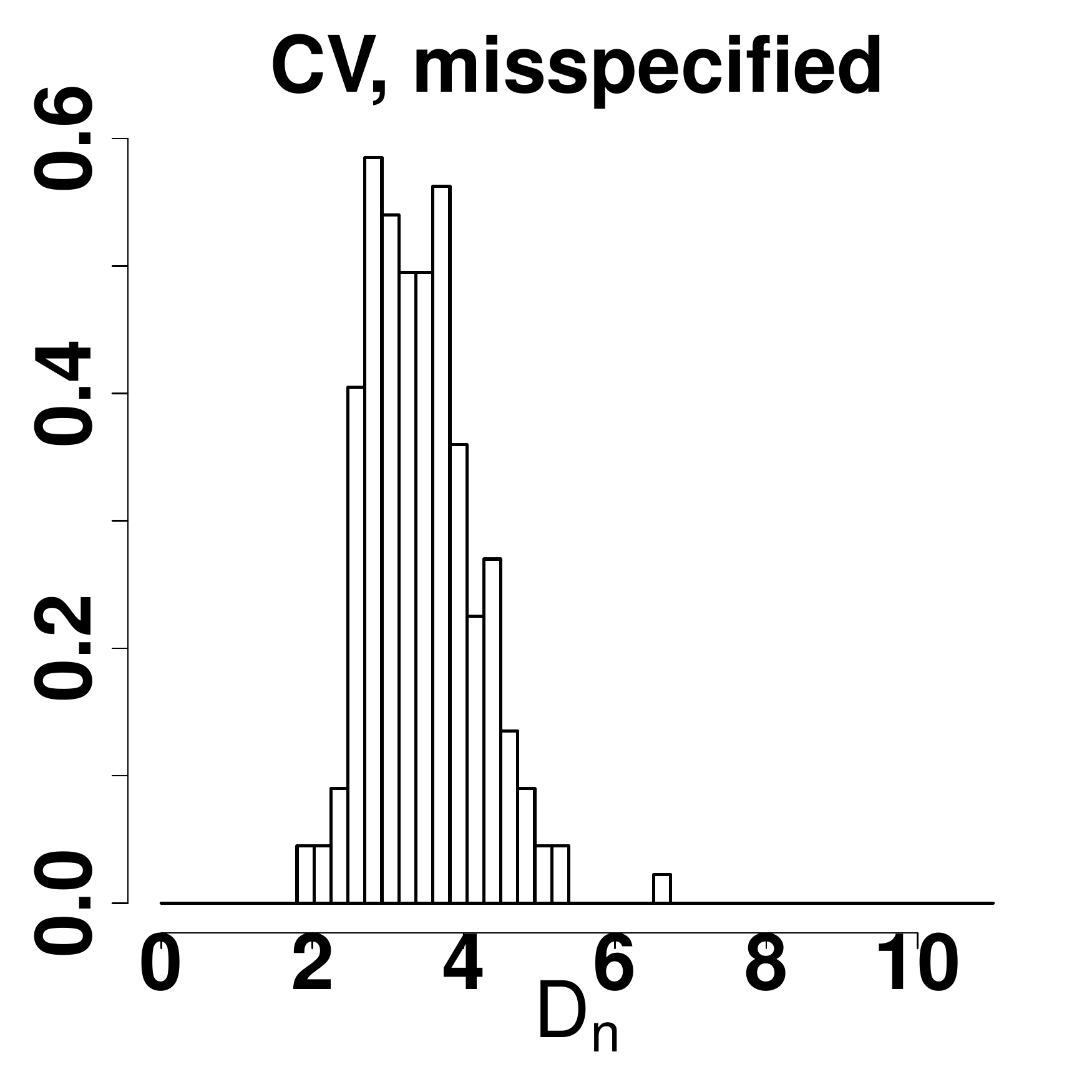}
\end{tabular}
\caption{Same settings as for Figure \ref{fig:delta:n100} but with $N=200$ and $n=500$. The relative differences between ML and CV are the same as for Figure \ref{fig:delta:n100}. The variances of the estimates $\hat{\ell}$ and of the error criteria are smaller than in Figure \ref{fig:delta:n100} and the histograms of the estimates are approximately symmetric and unimodal. Between Figure \ref{fig:delta:n100} and Figure \ref{fig:delta:n500}, all error criteria keep the same order of magnitude except $D_{n,\hat{\sigma}^2,\hat{\ell}}$ in the well-specified case which decreases and becomes very small for ML and CV (see also Table \ref{table:MCresults}). This is because in the well-specified case $D_{n,\sigma_0^2,\ell_0} = 0$.}
\label{fig:delta:n500}
\end{figure}

\begin{table}[] 
\begin{center} 
\begin{tabular}{  c | c | c | c | c | c | c}
$n$ & Specification & Estimation & Average & Standard deviation  & Average  & Average \\
&  & & of $\hat{\ell}$ & of $\hat{\ell}$ & of $E_{n,\hat{\sigma}^2,\hat{\ell}}$ & of $D_{n,\hat{\sigma}^2,\hat{\ell}}$ \\
\hline
\multirow{4}{*}{$100$} & Well-specified & ML & 3.031 & 0.370 & 0.073 & 0.023  \\
 & Well-specified & CV & 3.447 & 1.159 & 0.085 & 0.222  \\
 & Misspecified & ML & 1.090 & 0.553 & 0.255 & 1.025  \\
 & Misspecified & CV & 3.525 & 1.324 & 0.087 & 3.563  \\
\hline
\multirow{4}{*}{$500$} & Well-specified & ML & 3.003 & 0.177 & 0.070 & 0.005 \\
 & Well-specified & CV & 3.084 & 0.399 & 0.071 & 0.037  \\
 & Misspecified & ML &0.975 & 0.269 & 0.247 & 0.972  \\
 & Misspecified & CV & 3.089 & 0.437 & 0.071 & 3.442  \\
\end{tabular}
\end{center} 
\caption{For the settings of Figures \ref{fig:delta:n100} and \ref{fig:delta:n500}, the averages and standard-deviations of $\hat{\ell}$, $D_{n,\hat{\sigma}^2,\hat{\ell}}$
and $E_{n,\hat{\sigma}^2,\hat{\ell}}$ are reported.}
\label{table:MCresults}
\end{table}

\section{Discussion} \label{section:conclusion}

Theorems \ref{theorem:oracle:ML} and \ref{theorem:oracle:CV}, together with the results of the simulation study and the existing literature, draw the following conclusion. 

In the well-specified case, any covariance parameter estimator can be evaluated relatively to the estimation error criterion. The ML estimator is thus generally optimal in the well-specified case. In the simulation study, ML performs better than CV for the estimation error, the conditional Kullback-Leibler divergence and the integrated square prediction error.

On the other hand, in the misspecified case, there is not a unique quality criterion for covariance parameter estimation. Different criteria are optimized by different covariance parameters and estimators. We prove that ML asymptotically minimizes the conditional Kullback-Leibler divergence. In the case of independent and uniformly distributed observation points, we prove that CV asymptotically minimizes the integrated square prediction error.
Thus, CV is asymptotically optimal for the criterion it is designed for, provided the spatial sampling is in agreement with the CV principle.
As shown in the simulation study, the estimated covariance parameters and quality criterion values differ radically between ML and CV in the misspecified case.
In this regard, we point out that ML is not optimal relatively to all the common quality criteria, contrarily to the well-specified case. 
Note finally that our aim is not to provide a hierarchy between ML and CV in the misspecified case.

The fact that ML and CV typically optimize different criteria in the misspecified case can serve as a practical guideline. That is, one can compute the estimated covariance parameters with both methods and compare the two estimates and the corresponding log-likelihood and LOO mean square error values. If the differences between ML and CV are large, then it could be a warning that the covariance model at hand can be inappropriate.

We would like to mention two avenues for future research. First, the results of the Monte Carlo simulation make it conceivable that, for independent and uniform observation points, the ML and CV estimators converge to optimal parameters, for respectively the Kullback-Leibler divergence and the integrated square prediction error, and are asymptotically normal. [These optimal parameters would be equal to the true ones in the well-specified case.]
Considering asymptotic normality might require new techniques to account for independent and uniformly distributed observation points. 

Second, consider the alternative CV estimator, maximizing the log predictive probability criterion (\cite{rasmussen06gaussian}, chapter 5, \cite{zhang10kriging}, \cite{sundararajan01predictive}). It would be interesting to see whether this estimator can be shown to minimize with respect to $\theta$ the quality criterion $\int_{[0,n^{1/d}]^d} d_{\theta}(t)dt$, where $d_{\theta}(t)$ is the conditional Kullback-Leibler divergence of the conditional distribution of $Y(t)$, given $y$ and $X$, assumed under $(K_{\theta},\delta_{\theta})$, from the corresponding true conditional distribution obtained from $(K_{0},\delta_{0})$.

\appendix

\section{Proofs} \label{section:proofs}

\subsection{Notation}

In all the appendix, we consider that Conditions \ref{cond:iid_obs_points}, \ref{cond:Kzero:deltazero} and \ref{cond:Ktheta:deltatheta} hold.
For a column vector $v$ of size $m$, we let $||v||^2 = \sum_{i=1}^m v_i^2$ and $|v| = \max_{i=1,...,m} |v_i|$.
For a real $m \times m$ matrix $A$, we write as in \cite{gray06toeplitz}, $|A|^2 = \frac{1}{m} \sum_{i,j=1}^m A_{i,j}^2$ and $||A||$ for the largest singular value of $A$.
Both $|.|$ and $||.||$ are norms and $||.||$ is also a matrix norm.

For a sequence of real random variables $z_n$, we write $z_n \to_p 0$ and $z_n = o_p\left(1\right)$
when $z_n$ converges to zero in probability.
For a random variable $A$ and a deterministic function $f(A)$, we may write $\EE_A(f(A))$ for $\EE(f(A))$.
For two random variables $A$ and $B$ and a deterministic function $f(A,B)$ we may write $\EE_{A|B}(f(A,B))$ for $\EE( f(A,B) | B )$.

For a finite set $E$, we write $|E|$ for its cardinality. For a continuous set $E \subset \RR^d$, we write $|E|$ for its Lebesgue measure. For two sets $A$, $B$ in $\RR^d$, we write $d(A,B) = \inf_{a \in A,b\in B} |a-b|$.

We write $C_{sup}$ a generic non-negative finite constant (not depending on $n$, $X$, $Y$, $\epsilon$ and $\theta$). The actual value of $C_{sup}$ is of no interest and can change in the same sequence of equations. For instance, instead of writing, say, $a \leq 2b \leq 4c$, we shall write $a \leq C_{sup} b \leq C_{sup} c$.
Similarly, we write $C_{inf}$ a generic strictly positive constant (not depending on $n$, $X$, $Y$, $\epsilon$ and $\theta$). 

\subsection{Proofs for Maximum Likelihood} \label{subsection:proof:ML}

\begin{lem} \label{lem:control:derivative:ML}
For $i=1,...,p$,
\[
\EE \left( \sup_{\theta \in \Theta} \left| \frac{\partial}{\partial \theta_i} L_{\theta} \right| \right)
\]
is bounded w.r.t $n$.
\end{lem}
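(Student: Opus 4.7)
The derivative decomposes as $\partial_i L_\theta = T_1(\theta) - T_2(\theta)$ where
\[
T_1 := \frac{1}{n}\Tr\!\bigl(R_\theta^{-1}\tfrac{\partial R_\theta}{\partial \theta_i}\bigr), \qquad T_2 := \frac{1}{n}\,y^t R_\theta^{-1}\tfrac{\partial R_\theta}{\partial \theta_i} R_\theta^{-1} y.
\]
My plan is two-stage: first prove the pointwise bound $\sup_\theta \EE|\partial_i L_\theta| = O(1)$, then upgrade to the stated uniform bound via a Sobolev embedding on the cube $\Theta \subset \RR^p$. The decisive input, used constantly, is $\|R_\theta^{-1}\|\leq 1/C_{inf}$, which holds deterministically for every $\theta$ and every realization of $X$ since $R_\theta \succeq \delta_\theta I_n \succeq C_{inf} I_n$ by Condition \ref{cond:Ktheta:deltatheta}; in the paper's normalized Frobenius notation this gives $|R_\theta^{-1}|\leq 1/C_{inf}$ and $|R_\theta^{-1} B R_\theta^{-1}|\leq |B|/C_{inf}^2$ for any symmetric $B$.

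For $T_1$, the Frobenius Cauchy--Schwarz inequality gives $|T_1|\leq |R_\theta^{-1}|\,|\partial_i R_\theta|\leq|\partial_i R_\theta|/C_{inf}$, and expanding $|\partial_i R_\theta|^2$ with the polynomial tail bound of Condition \ref{cond:Ktheta:deltatheta} yields $\sup_\theta\EE|\partial_i R_\theta|^2=O(1)$: each off-diagonal squared moment is of order $1/n$ by integrability of $(1+|t|^{d+1})^{-2}$ on $\RR^d$, and there are $n(n-1)$ of them divided by $n$. For $T_2$, I split $T_2=\EE_{y|X}T_2+(T_2-\EE_{y|X}T_2)$. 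The conditional mean $\frac{1}{n}\Tr(AR_0)$ with $A:=R_\theta^{-1}(\partial_i R_\theta)R_\theta^{-1}$ is bounded by $|\partial_i R_\theta|\,|R_0|/C_{inf}^2$ by the same Frobenius argument, and $\sup_\theta\EE|R_0|^2=O(1)$ follows analogously from Condition \ref{cond:Kzero:deltazero}. For the deviation, the Gaussian variance formula together with $\Tr((AR_0)^2)=\|R_0^{1/2}AR_0^{1/2}\|_F^2\leq\|R_0\|^2\|A\|_F^2$ gives
\[
\var_{y|X}(T_2)=\frac{2}{n^2}\Tr\bigl((AR_0)^2\bigr)\leq\frac{2\|R_0\|^2|\partial_i R_\theta|^2}{n\,C_{inf}^4}.
\]
The \emph{main obstacle} surfaces at this step: under Condition \ref{cond:iid_obs_points} observation points may cluster, so $\|R_0\|$ is genuinely not $O(1)$ in probability but may reach order $\sqrt n$, and only the blunt bound $\|R_0\|^2\leq\|R_0\|_F^2=n|R_0|^2$ is available. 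This is however exactly compensated by the $1/n$ prefactor above, yielding $\EE[(T_2-\EE_{y|X}T_2)^2]\leq \frac{2}{C_{inf}^4}\EE[|R_0|^2|\partial_i R_\theta|^2]=O(1)$ after Cauchy--Schwarz and the fourth-moment bounds $\sup_\theta\EE|R_0|^4,\,\sup_\theta\EE|\partial_i R_\theta|^4=O(1)$, which are obtained by expanding the fourth powers and classifying the resulting index quadruples by overlap pattern, using Conditions \ref{cond:Kzero:deltazero} and \ref{cond:Ktheta:deltatheta}. This closes the pointwise step.

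To pass from $\sup_\theta\EE$ to $\EE\sup_\theta$, I invoke the Sobolev embedding $W^{k,2}(\Theta)\hookrightarrow L^\infty(\Theta)$, valid for any integer $k>p/2$: $\sup_\theta|g(\theta)|^2\leq C_\Theta\sum_{|\alpha|\leq k}\int_\Theta|\partial^\alpha g(\theta)|^2 d\theta$. Applied to $g=\partial_i L_\theta$, combined with Fubini and Jensen ($\EE\sup_\theta|g|\leq (\EE\sup_\theta|g|^2)^{1/2}$), this reduces the lemma to the pointwise bound $\sup_\theta\EE|\partial^\alpha\partial_i L_\theta|^2=O(1)$ for every $|\alpha|\leq\lfloor p/2\rfloor+1\leq p$. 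Iterating the identity $\partial_\theta R_\theta^{-1}=-R_\theta^{-1}(\partial_\theta R_\theta)R_\theta^{-1}$ unfolds each such higher derivative into a finite sum of terms of exactly the same shape as $T_1$ and $T_2$, but with longer alternating products of $R_\theta^{-1}$ factors and derivatives $\partial^\beta R_\theta$ of orders $|\beta|\leq|\alpha|+1\leq p+1$---precisely the range Condition \ref{cond:Ktheta:deltatheta} guarantees to exist with the same polynomial tails. Each such term is controlled by the very same Frobenius plus Gaussian-variance recipe as for $T_1$ and $T_2$, delivering the required uniform moment bounds and completing the proof.
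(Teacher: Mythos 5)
Your proposal is correct in its overall architecture and runs structurally parallel to the paper's proof: the same decomposition of $\partial L_\theta/\partial\theta_i$ into a trace term and a quadratic form, the same decisive input $\|R_\theta^{-1}\|\leq 1/C_{inf}$ from $R_\theta\succeq\delta_\theta I_n$ (this is the paper's Lemma \ref{lem:controle:Rmun:diagRmun}), the same split of the quadratic form into conditional mean plus Gaussian fluctuation, and the same mechanism for absorbing the unbounded operator norm of $R_0$ — your bound $\|R_0\|^2\le n|R_0|^2$ feeding into the $1/n^2$ prefactor of the variance is exactly the cancellation the paper exploits when it writes $\frac{1}{2n}\EE|R_0(M_\theta+M_\theta^t)|^2$. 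You have correctly identified the clustering of observation points as the real obstacle, and your diagnosis of how it is neutralized matches the paper's.

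The one genuine divergence is the Sobolev embedding used to convert $\sup_\theta\EE$ into $\EE\sup_\theta$. The paper uses an $L^1$-type embedding (Lemma \ref{lem:sobolev}), $\EE\sup_\theta|f_\theta|\leq C\sum_{i_1+\dots+i_p\leq p}\int_\Theta\EE|\partial^{i_1}_{\theta_1}\cdots\partial^{i_p}_{\theta_p}f_\theta|\,d\theta$, which requires $\theta$-derivatives up to total order $p$ but only \emph{first} moments of each derivative. You use $W^{k,2}(\Theta)\hookrightarrow L^\infty(\Theta)$ with $k=\lfloor p/2\rfloor+1$, which needs fewer derivatives but \emph{second} moments of each, i.e.\ $\sup_\theta\EE|\partial^\alpha\partial_i L_\theta|^2=O(1)$. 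This is where your final step is asserted rather than proved: after unfolding $\partial_\theta R_\theta^{-1}=-R_\theta^{-1}(\partial_\theta R_\theta)R_\theta^{-1}$, the generic term is $\frac{1}{n}y^tM_\theta y$ (or $\frac{1}{n}\Tr(M_\theta)$) with $M_\theta$ an alternating product of inverses and derivative matrices, and squaring it forces you to control $\EE|M_\theta|^4$ and $\EE[|R_0|^2|M_\theta|^2]$. The paper's supporting results (Lemmas \ref{lem:norm:Aun...Ak:carre} and \ref{lem:alternate:Rmun}) deliver only second moments of such products, so your route requires fourth-moment analogues. These do hold — the operator-norm factors $R_\theta^{-1}$ peel off via $|A_1BA_2|^2\le\|B\|^2|A_1A_2|^2$ and the remaining fourth moments follow from the same classification of index tuples by overlap pattern that you invoke for $\EE|R_0|^4$ — but they are a genuine strengthening of the technical toolkit, not "the very same recipe", and should be stated and proved if you keep the $L^2$ embedding. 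Alternatively, switching to the paper's $L^1$ embedding removes this burden entirely at the cost of differentiating one extra time, which Condition \ref{cond:Ktheta:deltatheta} already permits.
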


\begin{proof}[Proof of Lemma \ref{lem:control:derivative:ML}]

\begin{eqnarray} 
\EE \left( \sup_{\theta \in \Theta} \left| \frac{\partial}{\partial \theta_i} L_{\theta} \right| \right)
& = & \EE \left( \sup_{\theta \in \Theta} \left| \frac{1}{n} \Tr\left( R_{\theta}^{-1} \frac{\partial}{\partial \theta_i} R_{\theta} \right) - \frac{1}{n} y^t R_{\theta}^{-1} \left( \frac{\partial}{\partial \theta_i} R_{\theta} \right) R_{\theta}^{-1} y \right| \right) \nonumber \\
\mbox{(Cauchy-Schwarz:)} & \leq & \EE \left( \sup_{\theta \in \Theta}   \sqrt{ \left| R_{\theta}^{-1} \right|^2 } \sqrt{
\left| \frac{\partial}{\partial \theta_i} R_{\theta} \right|^2 }  \right) \label{eq:in:proof:controle:derivative:ML:1} \\ 
& & + 
\sqrt{ \frac{1}{n} \EE\left( \sup_{\theta \in \Theta} \left| \left| R_{\theta}^{-1} y \right| \right|^2 \right) }  
\sqrt{ \frac{1}{n} \EE\left( \sup_{\theta \in \Theta} \left| \left| \left( \frac{\partial}{\partial \theta_i} R_{\theta} \right) R_{\theta}^{-1} y \right| \right|^2 \right) }  \label{eq:in:proof:controle:derivative:ML:2}
\end{eqnarray}

Now, $\left| R_{\theta}^{-1} \right|^2 \leq \left| \left| R_{\theta}^{-1} \right| \right|^2$ because of (2.19) in \cite{gray06toeplitz} and $\left| \left| R_{\theta}^{-1} \right| \right|^2$ is bounded uniformly in $\theta$ because of Lemma \ref{lem:controle:Rmun:diagRmun}. Also, $\EE \left( \sup_{\theta \in \Theta}    \left| [\partial/\partial \theta_i] R_{\theta} \right|^2 \right)$ is bounded because of Condition \ref{cond:Ktheta:deltatheta} and of a simple case of Lemma \ref{lem:norm:Aun...Ak:carre}. So the right-hand side of \eqref{eq:in:proof:controle:derivative:ML:1} is bounded because of Jensen inequality. It remains to show that the term \eqref{eq:in:proof:controle:derivative:ML:2} is bounded. To show this, note first that
\begin{eqnarray*} 
\frac{1}{n} \EE\left( \sup_{\theta \in \Theta} \left| \left| R_{\theta}^{-1} y \right| \right|^2 \right)
& \leq & \frac{1}{n} \EE\left( \sup_{\theta \in \Theta} \left| \left| y \right| \right|^2 \left| \left| R_{\theta}^{-1} \right| \right|^2  \right) \\
\mbox{(Lemma \ref{lem:controle:Rmun:diagRmun}:)} & & \frac{C_{sup}}{n} \EE \left( || y ||^2 \right) \\
& = & C_{sup} (K_0(0) + \delta_0),
\end{eqnarray*}
is bounded. Thus, it remains to show that 
$\frac{1}{n} \EE\left( \sup_{\theta \in \Theta} \left| \left| \left( [\partial/\partial \theta_i] R_{\theta} \right) R_{\theta}^{-1} y \right| \right|^2 \right)$ is bounded. For this, we have
\begin{flalign*}
& \frac{1}{n} \EE\left( \sup_{\theta \in \Theta} \left| \left| \left( \frac{\partial}{\partial \theta_i} R_{\theta} \right) R_{\theta}^{-1} y \right| \right|^2 \right) & \\
& =  \frac{1}{n} \EE\left( \sup_{\theta \in \Theta}  y^t R_{\theta}^{-1} \left( \frac{\partial}{\partial \theta_i} R_{\theta} \right)^2 R_{\theta}^{-1} y  \right) & \\
 & \leq  C_{sup} \sum_{i_1+...+i_p \leq p} \int_{\Theta} \frac{1}{n} \EE\left( \left|  \frac{\partial^{i_1}}{\partial \theta_1^{i_1}}...\frac{\partial^{i_p}}{\partial \theta_p^{i_p}}  \left[ y^t R_{\theta}^{-1} \left(  \frac{\partial}{\partial \theta_i} R_{\theta} \right)^2 R_{\theta}^{-1} y  \right] \right| \right) d \theta ~ ~ \mbox{(Lemma \ref{lem:sobolev}).} & 
\end{flalign*}
Thus, it suffices to show that, for fixed $i_1,...,i_p \in \NN$ so that $i_1+...+i_p \leq p$,
\[
\frac{1}{n} \sup_{\theta \in \Theta} \EE\left( \left| \frac{\partial^{i_1}}{\partial \theta_1^{i_1}}...\frac{\partial^{i_p}}{\partial \theta_p^{i_p}}  \left[ y^t R_{\theta}^{-1} \left(  \frac{\partial}{\partial \theta_i} R_{\theta} \right)^2 R_{\theta}^{-1} y  \right] \right| \right)
\]
is bounded. The above display is smaller than a fixed sum of terms of the form $(1/n) \sup_{\theta \in \Theta} \EE \left( |y^t M_{\theta} y| \right)$, where the number of terms is independent of $n$ and $M_{\theta}$ is of the form $N_{1,\theta} M_{1,\theta} ... M_{k,\theta} N_{k+1,\theta}$ with $N_{i,\theta} = I_n$ or $N_{i,\theta} = R_{\theta}^{-1}$ and with $M_{i,\theta}$ of the form $[\partial^{c_1}/\partial \theta_1^{c_1}]...[\partial^{c_p}/\partial \theta_p^{c_p}] R_{\theta}$ with $c_1,...,c_p \in \NN$ and $c_1+...+c_p \leq p+1$.
Hence, it is enough to show that any term of the form $\sup_{\theta \in \Theta} (1/n) \EE \left( |y^t M_{\theta} y| \right)$ above is bounded. We have

\begin{flalign*}
& \sup_{\theta \in \Theta} \frac{1}{n} \EE \left( |y^t M_{\theta} y| \right) & \\
& \leq \sup_{\theta \in \Theta} \frac{1}{n} \EE \left( \left| y^t M_{\theta} y - \EE \left( \left. y^t M_{\theta} y \right| X \right) \right| \right) +\sup_{\theta \in \Theta} \frac{1}{n}  \EE \left( \left| \EE \left( \left. y^t M_{\theta} y \right| X \right) \right| \right) & \\
& \leq \sup_{\theta \in \Theta} \sqrt{ \EE \left( \var \left[  \left. \frac{1}{n} y^t M_{\theta} y \right| X  \right] \right) }+\sup_{\theta \in \Theta} \frac{1}{n}  \EE \left( \left| \EE \left( \left. y^t M_{\theta} y \right| X \right) \right| \right) ~ ~ \mbox{(Jensen inequality)} & \\
& = \sup_{\theta \in \Theta} \sqrt{ \EE \left(  \frac{1}{2n^2} \Tr \left[ R_0 \left\{ M_{\theta} + M_{\theta}^t \right\} R_0 \left\{ M_{\theta} + M_{\theta}^t \right\}  \right] \right) }+\sup_{\theta \in \Theta} \frac{1}{n}  \EE \left( \left| \Tr \left[ R_0 M_{\theta} \right] \right| \right) & \\
& \leq  \sup_{\theta \in \Theta} \sqrt{ \frac{1}{2n} \EE \left(   \left| R_0 \left\{ M_{\theta} + M_{\theta}^t \right\}   \right|^2 \right) }+\sup_{\theta \in \Theta} \sqrt{  \EE \left( |  R_0 |^2  \right)   \EE \left( |  M_{\theta} |^2  \right) }  ~ ~ \mbox{(Cauchy-Schwarz).} & \\
& \leq \sup_{\theta \in \Theta} \sqrt{ \frac{1}{n} \EE \left(   \left| R_0  M_{\theta} \right|^2 + \left| R_0  M_{\theta}^t \right|^2  \right) }+\sup_{\theta \in \Theta} \sqrt{  \EE \left( |  R_0 |^2  \right)   \EE \left( |  M_{\theta} |^2  \right) } &
\end{flalign*}
In the display above, the first term goes to $0$ because of Conditions \ref{cond:Kzero:deltazero} and \ref{cond:Ktheta:deltatheta} and Lemmas \ref{lem:norm:Aun...Ak:carre} and \ref{lem:controle:Rmun:diagRmun}.
The second term is bounded because of Lemmas \ref{lem:norm:Aun...Ak:carre}, \ref{lem:controle:Rmun:diagRmun} and \ref{lem:alternate:Rmun}. This completes the proof.

\end{proof}

\begin{cor} \label{cor:controle:derivative:Ltheta}
For any $i=1,...,p$,
\[
\EE \left( \sup_{\theta \in \Theta} \left| \frac{\partial}{\partial \theta_i} \EE \left( \left.  L_{\theta} \right| X \right) \right| \right)
~ ~ ~
\mbox{and}
~ ~ ~
 \sup_{\theta \in \Theta} \left| \frac{\partial}{\partial \theta_i}  D_{n,\theta} \right|
\]
are bounded w.r.t $n$.
\end{cor}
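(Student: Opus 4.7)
The plan is to reduce both quantities to the bound already established in Lemma \ref{lem:control:derivative:ML}. First I would compute the conditional expectation of $L_\theta$: conditionally on $X$, $y \sim \N(0, R_0)$, so the standard quadratic-form identity gives $\EE(y^t R_\theta^{-1} y \mid X) = \Tr(R_0 R_\theta^{-1})$, whence
\[
\EE(L_\theta \mid X) = \frac{1}{n}\log\det(R_\theta) + \frac{1}{n}\Tr(R_0 R_\theta^{-1}).
\]
Comparing with \eqref{eq:KL:zero:theta}, the difference $D_{n,\theta} - \EE(L_\theta \mid X) = -\frac{1}{n}\log\det(R_0) - 1$ does not depend on $\theta$. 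Consequently
\[
\frac{\partial}{\partial \theta_i} D_{n,\theta} = \frac{\partial}{\partial \theta_i} \EE(L_\theta \mid X),
\]
which collapses the two assertions of the corollary to a single identity.

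Next, I would interchange $\partial/\partial\theta_i$ and $\EE(\cdot\mid X)$. Under Condition \ref{cond:Ktheta:deltatheta}, $\theta \mapsto L_\theta$ is $C^{p+1}$ for each $X$, and $\partial_i L_\theta$ is dominated uniformly in $\theta$ by a random variable that is conditionally integrable given $X$; the required domination is essentially the one already used in the proof of Lemma \ref{lem:control:derivative:ML}, combined with Lemma \ref{lem:controle:Rmun:diagRmun}. A standard dominated-convergence argument then yields, almost surely,
\[
\frac{\partial}{\partial \theta_i} \EE(L_\theta \mid X) = \EE\Big(\frac{\partial}{\partial \theta_i} L_\theta \,\Big|\, X\Big).
\]
Applying Jensen's inequality conditionally on $X$ and then taking unconditional expectation gives
\[
\EE\Big(\sup_{\theta \in \Theta}\Big|\frac{\partial}{\partial \theta_i} \EE(L_\theta \mid X)\Big|\Big) \leq \EE\Big(\sup_{\theta \in \Theta}\Big|\frac{\partial}{\partial \theta_i} L_\theta\Big|\Big),
\]
and the right-hand side is bounded in $n$ by Lemma \ref{lem:control:derivative:ML}. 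This proves the first claim, and the second follows from the pointwise identity of derivatives (either interpreted as boundedness in expectation of $\sup_{\theta}|\partial_i D_{n,\theta}|$ or, via Markov's inequality, as tightness in $n$).

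The main technical point is justifying the derivative/conditional-expectation swap and the pull-through of the supremum inside a conditional expectation; both rest on a uniform-in-$\theta$ domination of $\partial_i L_\theta$ that is already encoded in the estimates of Lemma \ref{lem:control:derivative:ML}. No new ingredient beyond Condition \ref{cond:Ktheta:deltatheta}, Lemma \ref{lem:controle:Rmun:diagRmun}, and Lemma \ref{lem:norm:Aun...Ak:carre} is required, so the corollary should follow quickly once the identity $\partial_i D_{n,\theta} = \partial_i \EE(L_\theta \mid X)$ is in hand.
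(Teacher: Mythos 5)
Your proof is correct and follows essentially the same route as the paper: identify $\partial_{\theta_i} D_{n,\theta}$ with $\partial_{\theta_i}\EE(L_\theta\mid X)$ via the $\theta$-independent offset $-\tfrac{1}{n}\log\det(R_0)-1$, interchange differentiation with conditional expectation, and bound via $\sup_\theta|\EE(\cdot)|\le\EE(\sup_\theta|\cdot|)$ together with Lemma \ref{lem:control:derivative:ML}. Your version merely spells out the domination argument and the interpretation of the second (random) quantity in more detail than the paper does.
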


\begin{proof}[Proof of corollary \ref{cor:controle:derivative:Ltheta}]

Note first that $[\partial/\partial \theta_i] \EE \left( \left.  L_{\theta} \right| X \right) = [\partial/\partial \theta_i]  D_{n,\theta}$.
The corollary is then a consequence of the fact that, for fixed $n$, we have $(\partial/\partial \theta_i) \EE \left( \left.  L_{\theta} \right| X \right) =  \EE \left( \left. (\partial/\partial \theta_i) L_{\theta} \right| X \right)$ and of $\sup_{\theta} |\EE(.)| \leq \EE( \sup_{\theta} |.| )$.
\end{proof}

\begin{lem} \label{lem:var:given:X:Ltheta}

Consider a fixed $\theta \in \Theta$. Then
\[
\EE \left(  \left|  L_{\theta} - \EE(L_{\theta} | X) \right|    \right) \to_{n \to \infty} 0.
\]
\end{lem}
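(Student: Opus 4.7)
The approach is to exploit the fact that, conditionally on $X$, the variable $y$ is Gaussian with known covariance $R_0$, so the fluctuations of $L_\theta$ around its conditional mean come entirely from a Gaussian quadratic form.

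First, observe that the term $\frac{1}{n}\log\det(R_\theta)$ is a function of $X$ only, hence cancels when subtracting the conditional expectation. Writing $\EE(y^t R_\theta^{-1} y \mid X) = \Tr(R_0 R_\theta^{-1})$ (since $y \mid X \sim \N(0,R_0)$), we have
\[
L_\theta - \EE(L_\theta \mid X) = \frac{1}{n}\bigl[ y^t R_\theta^{-1} y - \Tr(R_0 R_\theta^{-1}) \bigr].
\]
By Jensen's inequality applied twice (to $|\cdot|$ and then to the conditional expectation),
\[
\EE\bigl( | L_\theta - \EE(L_\theta \mid X) | \bigr)
\le \sqrt{\EE\bigl( \var[ L_\theta \mid X ] \bigr)}
= \frac{1}{n}\sqrt{\EE\bigl( \var[ y^t R_\theta^{-1} y \mid X ] \bigr)}.
\]

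Next, the classical identity for quadratic forms in a centred Gaussian vector gives, conditionally on $X$,
\[
\var\bigl( y^t R_\theta^{-1} y \mid X \bigr) = 2\,\Tr\bigl( (R_\theta^{-1} R_0)^2 \bigr),
\]
since $R_\theta^{-1}$ is symmetric. Using $|\Tr(B^2)| \le \Tr(B^t B) = n\, |B|^2$ with $B = R_\theta^{-1}R_0$, and the submultiplicativity relation $|AB|^2 \le \|A\|^2\,|B|^2$, we obtain
\[
\Tr\bigl( (R_\theta^{-1} R_0)^2 \bigr) \le n\, \| R_\theta^{-1} \|^2 \, |R_0|^2 .
\]

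Finally, plugging this bound in and taking expectations,
\[
\EE\bigl( | L_\theta - \EE(L_\theta \mid X) | \bigr) \le \sqrt{\frac{2}{n}\, \EE\bigl( \| R_\theta^{-1} \|^2 \, |R_0|^2 \bigr)} .
\]
By Lemma \ref{lem:controle:Rmun:diagRmun}, $\| R_\theta^{-1} \|^2$ is bounded (deterministically, uniformly in $n$ and $\theta$), and by Lemma \ref{lem:norm:Aun...Ak:carre} together with Condition \ref{cond:Kzero:deltazero}, $\EE(|R_0|^2)$ is bounded uniformly in $n$. The right-hand side is therefore $O(1/\sqrt{n})$, which tends to $0$.

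There is no real obstacle here beyond remembering the Gaussian quadratic-form variance formula and choosing the right norm bounds; the log-determinant term drops out for free because it is $X$-measurable, and all remaining quantities are controlled by the matrix lemmas already stated in the appendix.
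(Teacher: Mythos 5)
Your proof is correct and follows essentially the same route as the paper: the log-determinant term drops out as $X$-measurable, Jensen is applied twice to reduce to $\EE(\var(L_\theta\mid X))$, the Gaussian quadratic-form variance identity gives $\frac{2}{n^2}\Tr\left[(R_\theta^{-1}R_0)^2\right]$, and the trace is controlled by the normalized Frobenius norm together with the uniform spectral bound on $R_\theta^{-1}$ from Lemma \ref{lem:controle:Rmun:diagRmun} and the boundedness of $\EE(|R_0|^2)$ from Lemma \ref{lem:norm:Aun...Ak:carre}. The only cosmetic difference is that you bound $|R_\theta^{-1}R_0|^2\leq \|R_\theta^{-1}\|^2|R_0|^2$ directly rather than citing Lemma \ref{lem:alternate:Rmun}, which amounts to the same estimate.
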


\begin{proof}[Proof of Lemma \ref{lem:var:given:X:Ltheta}]

We have, applying Jensen inequality twice
\[
 \EE \left( \left | L_{\theta} - \EE(L_{\theta}|X) \right| \right) 
 \leq  \EE \left(  \sqrt{ var( L_{\theta} | X ) } \right)  
 \leq  \sqrt{ \EE \left(   var( L_{\theta} | X )  \right) }  
 =  \sqrt{ \EE \left(   \frac{2}{n^2} \Tr\left[ R_0 R_{\theta}^{-1} R_0 R_{\theta}^{-1}  \right]  \right) }.  
\]
The eigenvalues of $R_{\theta}^{-1}$ are smaller than a finite constant $C_{sup}$ for any $n,X,\theta$ from Lemma \ref{lem:controle:Rmun:diagRmun}. Thus, by applying Cauchy schwarz inequality and Lemmas \ref{lem:norm:Aun...Ak:carre} and \ref{lem:alternate:Rmun},
\[
\EE \left( \left | L_{\theta} - \EE(L_{\theta}|X) \right| \right)  \leq 
\sqrt{\frac{2}{n}} \sqrt{ \EE \left( |R_0 R_{\theta}^{-1} |^2 \right)} 
 \leq  \frac{C_{sup}}{\sqrt{n}}.
\]

\end{proof}

\begin{proof}[Proof of Theorem \ref{theorem:oracle:ML}]

We have
\begin{flalign*}
& \sup_{\theta \in \Theta} \left| L_{\theta} - \log(\det(R_0)) -1 - D_{n,\theta} \right| & \\
& \leq \sup_{\theta \in \Theta} \left| L_{\theta} - \EE(L_{\theta}|X) \right| + \sup_{\theta \in \Theta} \left| \EE(L_{\theta}|X) - \log(\det(R_0)) -1 - D_{n,\theta} \right|  & \\
& = \sup_{\theta \in \Theta} \left| L_{\theta} - \EE(L_{\theta}|X) \right| .
\end{flalign*}
The term in the above display goes to $0$ in probability. Indeed, for fixed $\theta$, the function of $\theta$ goes to $0$ in probability because of Lemma \ref{lem:var:given:X:Ltheta}. The convergence of the supremum over $\theta$ to $0$ is then a consequence of the fact that $\Theta$ is compact and of Lemma \ref{lem:control:derivative:ML} and corollary \ref{cor:controle:derivative:Ltheta}.

Finally, since $\hat{\theta}_{ML}$ minimizes $L_{\theta}$ and so also  $L_{\theta} - \log(\det(R_0)) -1$, we conclude with, for any $\theta \in \Theta$,
\begin{eqnarray*}
D_{n,\hat{\theta}_{ML}} - D_{n,\theta} & \leq & L_{\hat{\theta}_{ML}} - L_{\theta} + 2\sup_{\theta \in \Theta} \left| L_{\theta} - \log(\det(R_0)) -1 - D_{n,\theta} \right| \\ 
& \leq & 2\sup_{\theta \in \Theta} \left| L_{\theta} - \log(\det(R_0)) -1 - D_{n,\theta} \right| . \\
\end{eqnarray*}

Hence $\sup_{\theta \in \Theta} \left( D_{n,\hat{\theta}_{ML}} - D_{n,\theta} \right) = o_p(1)$.

\end{proof}

\subsection{Proofs for Cross Validation} \label{subsection:proof:CV}

\begin{lem} \label{lem:control:derivative:CV}
For $i=1,...,p$,
\[
\EE \left( \sup_{\theta \in \Theta} \left| \frac{\partial}{\partial \theta_i} CV_{\theta} \right| \right)
\]
is bounded w.r.t $n$.
\end{lem}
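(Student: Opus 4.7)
The approach mirrors the proof of Lemma \ref{lem:control:derivative:ML}, with additional bookkeeping to track the diagonal factor $D_\theta := (\diag R_\theta^{-1})^{-1}$ appearing in the matrix-form expression $CV_\theta = n^{-1} y^t R_\theta^{-1} D_\theta^2 R_\theta^{-1} y$ of \eqref{eq:CVtheta}. The essential new analytic ingredient is a uniform bound on $\|D_\theta\|$: by the Gaussian block-inverse identity,
\[
\frac{1}{(R_\theta^{-1})_{ii}} \;=\; \var_\theta(y_i\mid y_{-i}) \;\leq\; \var_\theta(y_i) \;=\; K_\theta(0) + \delta_\theta \;\leq\; C_{sup},
\]
uniformly in $\theta$, $i$ and $n$ under Condition \ref{cond:Ktheta:deltatheta}; this is presumably already part of Lemma \ref{lem:controle:Rmun:diagRmun}. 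Combined with the elementary bound $|M_{ii}| \leq \|M\|$, it follows that any diagonal matrix produced by differentiating $D_\theta$ has spectral norm controlled by the same quantities as in the ML case.

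Concretely, use $\partial R_\theta^{-1}/\partial \theta_j = -R_\theta^{-1}(\partial R_\theta/\partial \theta_j)R_\theta^{-1}$ and $\partial D_\theta/\partial \theta_j = D_\theta^2 \diag\bigl(R_\theta^{-1}(\partial R_\theta/\partial \theta_j) R_\theta^{-1}\bigr)$ to expand $\partial CV_\theta/\partial \theta_i$ as a finite sum, with an $n$-independent number of summands, of quadratic forms $n^{-1} y^t R_\theta^{-1} A_\theta R_\theta^{-1} y$, where each $A_\theta$ is a product of a bounded number of factors from
\[
\bigl\{R_\theta^{-1},\;\partial R_\theta/\partial \theta_j,\; D_\theta,\; \diag(R_\theta^{-1}(\partial R_\theta/\partial \theta_j) R_\theta^{-1})\bigr\}.
\]
Apply to each such term the Cauchy-Schwarz split of \eqref{eq:in:proof:controle:derivative:ML:2}:
\[
\EE\sup_{\theta \in \Theta} \tfrac{1}{n}\bigl|y^t R_\theta^{-1} A_\theta R_\theta^{-1} y\bigr|
\;\leq\; \sqrt{\tfrac{1}{n}\EE\sup_\theta \|R_\theta^{-1} y\|^2}\;\sqrt{\tfrac{1}{n}\EE\sup_\theta \|A_\theta R_\theta^{-1} y\|^2}.
\]
The first factor is already shown to be bounded in the proof of Lemma \ref{lem:control:derivative:ML}. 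For the second, invoke Lemma \ref{lem:sobolev} to dominate $\sup_\theta y^t R_\theta^{-1} A_\theta^t A_\theta R_\theta^{-1} y$ by an integral over $\Theta$ of its $\theta$-partial derivatives up to order $p$. Each such derivative is again a finite sum of quadratic forms $y^t N_\theta y$ in which $N_\theta$ is a product, of bounded length, of factors of the four types above, with total differentiation order of $K_\theta$ and $\delta_\theta$ at most $p+1$ and hence controlled by Condition \ref{cond:Ktheta:deltatheta}. It remains to bound $\sup_\theta n^{-1}\EE|y^t N_\theta y|$ uniformly in $n$, which is done exactly as in the last display of the proof of Lemma \ref{lem:control:derivative:ML}: split into conditional mean and variance given $X$, use $\EE(y^t N_\theta y\mid X) = \Tr(R_0 N_\theta)$ and $\var(y^t N_\theta y\mid X) = \tfrac12 \Tr\bigl(R_0(N_\theta+N_\theta^t)R_0(N_\theta+N_\theta^t)\bigr)$, and apply Lemmas \ref{lem:norm:Aun...Ak:carre}, \ref{lem:controle:Rmun:diagRmun} and \ref{lem:alternate:Rmun}.

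The main obstacle is essentially combinatorial rather than analytic: differentiating $D_\theta$ and iterating up to $p$ times produces substantially more terms than in the ML case, and one must verify that each resulting $N_\theta$ still fits the matrix-product template to which Lemmas \ref{lem:norm:Aun...Ak:carre} and \ref{lem:alternate:Rmun} apply (in particular, that every new diagonal factor $\diag(R_\theta^{-1}(\partial R_\theta/\partial \theta_j) R_\theta^{-1})$ is spectrally bounded by the corresponding full matrix, which follows from $|M_{ii}|\leq\|M\|$). Once the uniform bound on $\|D_\theta\|$ is in hand, the rest is a direct transcription of the ML argument, and no fundamentally new analytic ingredient is required.
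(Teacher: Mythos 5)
Your route is genuinely different from the paper's, and it has a gap at the one point where the CV criterion differs structurally from the ML one. You differentiate the matrix form \eqref{eq:CVtheta}, which produces factors of the type $\diag\bigl(R_\theta^{-1}(\partial R_\theta/\partial\theta_j)R_\theta^{-1}\bigr)$ (and higher-order analogues), and you propose to control their spectral norm via $|M_{ii}|\leq \|M\|$ applied to $M=R_\theta^{-1}(\partial R_\theta/\partial\theta_j)R_\theta^{-1}$. But $\|M\|$ is \emph{not} bounded here: while $\|R_\theta^{-1}\|$ is uniformly bounded (Lemma \ref{lem:controle:Rmun:diagRmun}), the spectral norms of $\partial R_\theta/\partial\theta_j$ are not bounded, even in probability, because i.i.d. uniform sampling produces clusters of closely spaced points --- this is exactly the obstacle the paper highlights in Section \ref{section:asymptotic:optimality} as the reason the standard matrix-form arguments fail. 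Consequently these diagonal factors cannot be fed into Lemma \ref{lem:alternate:Rmun} as spectrally bounded $B$-type factors. Treating them instead as $A$-type factors is not immediate either: Lemma \ref{lem:alternate:Rmun} requires $\EE\left|A_{a_1,\theta}\cdots A_{a_m,\theta}\right|^2$ bounded for arbitrary products, and a product such as $\diag\bigl(R_\theta^{-1}(\partial R_\theta/\partial\theta_j)R_\theta^{-1}\bigr)\cdot(\partial R_\theta/\partial\theta_k)$ couples the (potentially large) diagonal entries with the (potentially large) row sums at the same clustered points, so bounding its normalized Frobenius norm in expectation needs fourth-moment estimates that are not in the paper's toolbox and that you do not supply. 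Note also that the uniform bound on $\|D_\theta\|$ you single out as the ``essential new ingredient'' is the easy part (it is indeed in Lemma \ref{lem:controle:Rmun:diagRmun}); the hard part is precisely the derivative of $D_\theta$.

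The paper sidesteps all of this by never differentiating the matrix form. It writes $CV_\theta$ as the LOO sum \eqref{eq:CV:theta:with:sum}, uses exchangeability of $X_1,\dots,X_n$ to reduce to the single term $\EE\bigl(\sup_\theta|\partial_{\theta_i}(y_1-\hat y_{1,\theta})^2|\bigr)$, applies Lemma \ref{lem:sobolev}, and then differentiates $\hat y_{1,\theta}=r_{1,\theta}^t R_{1,\theta}^{-1}y_{-1}$, whose derivatives involve only $R_{1,\theta}^{-1}$ (spectrally bounded), derivative matrices of $R_{1,\theta}$ (handled by Lemma \ref{lem:norm:Aun...Ak:carre}), and the vectors $\partial^\alpha r_{1,\theta}$. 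The latter are controlled by conditioning on $X_2,\dots,X_n$ and integrating out $X_1$ (display \eqref{eq:cond:on:2n:trick} together with Lemma \ref{lem:integrale:abc}), which yields the crucial $n^{-1}(1+|X_i-X_j|^{d+1})^{-1}$ factor. No diagonal-of-a-derivative matrix ever appears. To make your approach work you would need to either prove the missing moment bounds for products involving the diagonal factors, or switch to the paper's decomposition.
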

\begin{proof}[Proof of Lemma \ref{lem:control:derivative:CV}]
We have
\begin{eqnarray*}
\EE \left( \sup_{\theta \in \Theta} \left| \frac{\partial}{\partial \theta_i} CV_{\theta} \right| \right)
& \leq & \EE \left( \frac{1}{n} \sum_{k=1}^n  \sup_{\theta \in \Theta} \left| \frac{\partial}{\partial \theta_i} ( y_k - \hat{y}_{k,\theta} )^2 \right| \right) \\
\mbox{(symmetry of $X_1,...,X_n$:)} & = & \EE \left(   \sup_{\theta \in \Theta} \left| \frac{\partial}{\partial \theta_i} ( y_1 - \hat{y}_{1,\theta} )^2 \right| \right) \\
\mbox{(Lemma \ref{lem:sobolev}:)} & \leq & C_{sup} \sum_{i_1+...+i_p \leq p} \int_{\Theta} \EE \left(  \left| \frac{\partial^{i_1}}{\partial \theta_1^{i_1}}...\frac{\partial^{i_p}}{\partial \theta_p^{i_p}} \frac{\partial}{\partial \theta_i} ( y_1 - \hat{y}_{1,\theta} )^2 \right| \right) d \theta.
\end{eqnarray*}
Let us consider a specific $i_1,...,i_p$. Then $ [\partial^{i_1}/\partial \theta_1^{i_1}]...[\partial^{i_p}/\partial \theta_p^{i_p}] [\partial/\partial \theta_i] ( y_1 - \hat{y}_{1,\theta} )^2$ is a weighted sum (weights and number of terms depending only on $i_1,...,i_p$), so that the terms are of the two following forms: 
\[ 
( y_1 - \hat{y}_{1,\theta} ) \left( \frac{\partial^{k_1}}{\partial \theta_1^{k_1}}...\frac{\partial^{k_p}}{\partial \theta_p^{k_p}} \frac{\partial}{\partial \theta_i}  \hat{y}_{1,\theta} \right)
~ ~ ~
\mbox{or}
~ ~ ~
\left( \frac{\partial^{k_1}}{\partial \theta_1^{k_1}}...\frac{\partial^{k_p}}{\partial \theta_p^{k_p}} \frac{\partial}{\partial \theta_i}  \hat{y}_{1,\theta} \right)\left( \frac{\partial^{l_1}}{\partial \theta_1^{l_1}}...\frac{\partial^{l_p}}{\partial \theta_p^{l_p}} \frac{\partial}{\partial \theta_i}  \hat{y}_{1,\theta} \right).
\]
Thus, we just have to show that the mean values of the absolute values of the terms of the form above (for $k_1+...+k_p \leq p$ and $l_1+...+l_p \leq p$) are bounded uniformly in $\theta \in \Theta$. 
By using Cauchy-Schwarz inequality, these means of absolute values are smaller than either 
\[
\sqrt{ \EE\left( ( y_1 - \hat{y}_{1,\theta} )^2 \right)  } \sqrt{ \EE\left( \left( \frac{\partial^{k_1}}{\partial \theta_1^{k_1}}...\frac{\partial^{k_p}}{\partial \theta_p^{k_p}} \frac{\partial}{\partial \theta_i}  \hat{y}_{1,\theta} \right)^2 \right)  }
\]
or 
\[
 \sqrt{ \EE\left( \left( \frac{\partial^{k_1}}{\partial \theta_1^{k_1}}...\frac{\partial^{k_p}}{\partial \theta_p^{k_p}} \frac{\partial}{\partial \theta_i}  \hat{y}_{1,\theta} \right)^2 \right)  }
\sqrt{ \EE\left( \left( \frac{\partial^{l_1}}{\partial \theta_1^{l_1}}...\frac{\partial^{l_p}}{\partial \theta_p^{l_p}} \frac{\partial}{\partial \theta_i}  \hat{y}_{1,\theta} \right)^2 \right)  }.
\]
Now, $\EE\left( ( y_1 - \hat{y}_{1,\theta} )^2 \right) \leq 2 \EE( y_1^2 ) + 2 \EE( \hat{y}_{1,\theta}^2 )$. The term $\EE( y_1^2 )$ is bounded uniformly in $\theta$. Thus, finally, it remains to show that for any $a_1+...+a_p \leq p+1$, $\sup_{\theta \in \Theta} \EE\left( \left( [\partial^{a_1}/\partial \theta_1^{a_1}]...[\partial^{a_p}/\partial \theta_p^{a_p}]   \hat{y}_{1,\theta} \right)^2 \right) $ is bounded.
For that, we have $\hat{y}_{1,\theta} = r_{1,\theta}^t R_{1,\theta}^{-1} y_{-1}$. Thus, $[\partial^{a_1}/\partial \theta_1^{a_1}]...[\partial^{a_p}/\partial \theta_p^{a_p}]   \hat{y}_{1,\theta}$ is a fixed sum of weighted terms of the form $w_{\theta}^t M_{\theta} y_{-1}$, where $w_{\theta}$ is of the form $[\partial^{b_1}/\partial \theta_1^{b_1}]...[\partial^{b_p}/\partial \theta_p^{b_p}] r_{1,\theta}$ ($b_1+...+b_p \leq p+1$) and $M_{\theta}$ is of the form $R_{1,\theta}^{-1} M_{1,\theta}...R_{1,\theta}^{-1} M_{k,\theta} R_{1,\theta}^{-1}$. Finally, $k$ is smaller than a finite constant $C_{sup}$ (function of $p$) and $M_{i,\theta}$ is of the form $[\partial^{c_1}/\partial \theta_1^{c_1}]...[\partial^{c_p}/\partial \theta_p^{c_p}] R_{1,\theta}$, with $c_1+...+c_p \leq p+1$.
Thus, it is sufficient to show that a generic $\sup_{\theta \in \Theta} \EE \left( \left( w_{\theta}^ tM_{\theta}y_{-1} \right)^2 \right)$, as previously defined, is bounded.

Then,
\begin{eqnarray} \label{eq:cond:on:2n:trick}
 \sup_{\theta \in \Theta} \EE \left( \left( w_{\theta}^ tM_{\theta}y_{-1} \right)^2 \right)
 & = &  \sup_{\theta \in \Theta} \EE_X \EE_{y|X} \left(  y_{-1}^t M_{\theta}^t w_{\theta} w_{\theta}^t M_{\theta} y_{-1}  \right) \nonumber \\
  & = &  \sup_{\theta \in \Theta} \EE_X \Tr \left(  R_{1,0} M_{\theta}^t w_{\theta} w_{\theta}^t M_{\theta}   \right) \nonumber \\
   & \leq &  \sup_{\theta \in \Theta} \EE_X \left[ \sum_{i,j=2}^n \left| \left( M_{\theta}  R_{1,0} M_{\theta}^t \right)_{i,j} \right| \left| \left(w_{\theta} w_{\theta}^t \right)_{i,j} \right|  \right] 
      \\
        & = &  \sup_{\theta \in \Theta} \left[  \sum_{i,j=2}^n \EE_{X_2,...,X_n} \left( \left| \left( M_{\theta}  R_{1,0} M_{\theta}^t \right)_{i,j} \right| \EE_{X_1|X_2,...,X_n} \left| \left(w_{\theta} w_{\theta}^t \right)_{i,j} \right| \right) \right]. \nonumber
\end{eqnarray}
Now, because of Conditions \ref{cond:iid_obs_points} and \ref{cond:Ktheta:deltatheta},
\begin{eqnarray*}
\EE_{X_1|X_2,...,X_n} \left| \left(w_{\theta} w_{\theta}^t \right)_{i,j} \right| & \leq &  \frac{C_{sup}}{n} \int_{[0,n^{1/d}]^d} \frac{1}{1+|X_i-x_1|^{d+1}} \frac{1}{1+|X_j-x_1|^{d+1}} dx_1 \\
\mbox{ (Lemma \ref{lem:integrale:abc}:)} & \leq & \frac{1}{n} \frac{C_{sup}}{1+|X_i-X_j|^{d+1}}.
\end{eqnarray*}

So,
\begin{eqnarray*}
 \sup_{\theta \in \Theta}  \EE \left( \left( w_{\theta}^ tM_{\theta}y_{-1} \right)^2 \right)
 & \leq & C_{sup}  \frac{1}{n} \sup_{\theta \in \Theta} \left[ \sum_{i,j=2}^n \EE_{X_2,...,X_n} \left( \left| \left( M_{\theta}  R_{1,0} M_{\theta}^t \right)_{i,j} \right| \frac{1}{1+|X_i-X_j|^{d+1}} \right) \right] \\
 \mbox{(Cauchy-Schwarz:)} & \leq &  \sup_{\theta \in \Theta} \left[ \sqrt{ \EE \left\{ \left| M_{\theta} R_{1,0} M_{\theta}^t \right|^2 \right\}  } \sqrt{ \EE \left\{ \frac{1}{n} \sum_{i,j=2}^n \left( \frac{1}{1+|X_i-X_j|^{d+1}} \right)^2 \right\} } \right].
 \end{eqnarray*}
The supremum over $\theta$ of the second term above is bounded because of Lemma \ref{lem:norm:Aun...Ak:carre}. The supremum over $\theta$ of the first term above is bounded because of Lemmas \ref{lem:norm:Aun...Ak:carre}, \ref{lem:controle:Rmun:diagRmun} and \ref{lem:alternate:Rmun}.

\end{proof}

\begin{cor} \label{cor:controle:derivative}
For any $i=1,...,p$,
\[
\EE \left( \sup_{\theta \in \Theta} \left| \frac{\partial}{\partial \theta_i} \EE \left( \left.  CV_{\theta} \right| X \right) \right| \right)
~ ~ ~
\mbox{and}
~ ~ ~
 \sup_{\theta \in \Theta} \left| \frac{\partial}{\partial \theta_i}  \EE \left(  CV_{\theta}  \right) \right|
\]
are bounded w.r.t $n$.
\end{cor}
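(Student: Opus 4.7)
The plan is to mirror the proof of Corollary \ref{cor:controle:derivative:Ltheta} for ML, now using Lemma \ref{lem:control:derivative:CV} as the key input. The two quantities to bound are both of the form $\sup_{\theta}|\partial_{\theta_i} \EE(CV_{\theta} | \mathcal{G})|$ where $\mathcal{G}$ is either the trivial $\sigma$-algebra or $\sigma(X)$, so one strategy suffices.

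First I would justify the interchange of the partial derivative and the (conditional) expectation: since Lemma \ref{lem:control:derivative:CV} provides
\[
\EE \!\left( \sup_{\theta \in \Theta} \left| \frac{\partial}{\partial \theta_i} CV_{\theta} \right| \right) < \infty
\]
uniformly in $n$, this supremum serves as an integrable dominating function, so the standard dominated-convergence argument for differentiating under the integral sign yields
\[
\frac{\partial}{\partial \theta_i} \EE\!\left( CV_{\theta} \mid X \right) = \EE\!\left( \left. \frac{\partial}{\partial \theta_i} CV_{\theta} \right| X \right),
\qquad
\frac{\partial}{\partial \theta_i} \EE(CV_{\theta}) = \EE\!\left( \frac{\partial}{\partial \theta_i} CV_{\theta} \right).
\]
Second, I would use the elementary observation that, for any family of integrable random variables $\{Z_{\theta}\}_{\theta \in \Theta}$, one has $\sup_{\theta} |\EE(Z_{\theta} \mid \mathcal{G})| \leq \EE(\sup_{\theta} |Z_{\theta}| \mid \mathcal{G})$, since $|\EE(Z_{\theta}\mid\mathcal{G})| \leq \EE(|Z_{\theta}|\mid\mathcal{G}) \leq \EE(\sup_{\theta'}|Z_{\theta'}|\mid\mathcal{G})$ and the final bound is $\theta$-free. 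Applying this with $Z_{\theta} = \partial_{\theta_i} CV_{\theta}$ and taking an outer expectation gives
\[
\EE\!\left( \sup_{\theta \in \Theta} \left| \frac{\partial}{\partial \theta_i} \EE(CV_{\theta} \mid X) \right| \right)
\;\leq\;
\EE\!\left( \sup_{\theta \in \Theta} \left| \frac{\partial}{\partial \theta_i} CV_{\theta} \right| \right),
\]
and similarly for the unconditional version. Both right-hand sides are bounded in $n$ by Lemma \ref{lem:control:derivative:CV}, which concludes the argument.

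There is essentially no real obstacle here: the only non-cosmetic point is the differentiation-under-the-integral step, and its hypothesis is supplied directly by Lemma \ref{lem:control:derivative:CV} (the measurability of $\theta \mapsto CV_{\theta}$ and its derivatives follows from Condition \ref{cond:Ktheta:deltatheta}, and the compactness of $\Theta$ makes the supremum measurable without further ado). All heavy lifting, namely the combinatorial expansion of derivatives of $CV_{\theta}$ and the probabilistic control of traces and quadratic forms involving $R_{\theta}^{-1}$ and its derivatives on random observation points, was already carried out in the proof of Lemma \ref{lem:control:derivative:CV}.
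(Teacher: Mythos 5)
Your argument is correct and is essentially identical to the paper's own proof: both rest on Lemma \ref{lem:control:derivative:CV}, the interchange of $\partial/\partial\theta_i$ with $\EE(\cdot\mid X)$ and $\EE(\cdot)$, and the inequality $\sup_{\theta}|\EE(\cdot)|\leq\EE(\sup_{\theta}|\cdot|)$. The only difference is that you spell out the domination argument for differentiating under the expectation, which the paper simply asserts.
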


\begin{proof}[Proof of corollary \ref{cor:controle:derivative}]
The corollary is a consequence of Lemma \ref{lem:control:derivative:CV}, $\sup_{\theta} |\EE(.)| \leq \EE( \sup_{\theta} |.| )$ and of the fact that, for fixed $n$, we have $(\partial/\partial \theta_i) \EE \left( \left.  CV_{\theta} \right| X \right) =  \EE \left( \left. (\partial/\partial \theta_i) CV_{\theta} \right| X \right)$
and $(\partial/\partial \theta_i) \EE( CV_{\theta} ) =  \EE( (\partial/\partial \theta_i) CV_{\theta} )$.
\end{proof}

\begin{lem} \label{lem:var:given:X:CVtheta}

For any fixed $\theta \in \Theta$ we have
\[
\EE \left(  \left| CV_{\theta} - \EE( CV_{\theta} | X ) \right|\right) \to_{n \to \infty} 0.
\]

\end{lem}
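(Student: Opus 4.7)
Following the strategy of Lemma \ref{lem:var:given:X:Ltheta}, the plan is to apply Jensen's inequality twice to reduce the problem to controlling the expected conditional variance:
\begin{equation*}
\EE \left( \left| CV_{\theta} - \EE( CV_{\theta} | X ) \right| \right) \leq \EE \sqrt{ \var( CV_{\theta} | X ) } \leq \sqrt{ \EE \, \var( CV_{\theta} | X ) },
\end{equation*}
so it suffices to show that $\EE \, \var( CV_{\theta} | X ) = O(1/n)$.

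For the conditional variance, the matrix form \eqref{eq:CVtheta} expresses $CV_{\theta} = (1/n)\, y^t M_{\theta} y$ with the symmetric matrix $M_{\theta} = R_{\theta}^{-1} \left( \diag( R_{\theta}^{-1} ) \right)^{-2} R_{\theta}^{-1}$. Since $y | X \sim \N(0, R_0)$, the Gaussian quadratic form identity yields $\var( CV_{\theta} | X ) = (2/n^2)\, \Tr\left( (R_0 M_{\theta})^2 \right)$. Using the elementary inequality $\Tr(C^2) \leq \Tr(C C^t) = n |C|^2$, valid for any real square matrix $C$, together with the sub-multiplicativity $|AB|^2 \leq ||B||^2 |A|^2$ (which follows from $||AB||_F \leq ||A||_F \cdot ||B||$), this quantity is at most $(2/n)\, ||M_{\theta}||^2 |R_0|^2$.

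The spectral norm of $M_{\theta}$ is controlled by $||M_{\theta}|| \leq ||R_{\theta}^{-1}||^2 \, ||(\diag R_{\theta}^{-1})^{-1}||^2$, and both factors are uniformly bounded in $n$, $X$ and $\theta$ by Lemma \ref{lem:controle:Rmun:diagRmun}. The main ingredient absent from the ML analogue is the upper bound on $||(\diag R_{\theta}^{-1})^{-1}||$; this bound rests on the pointwise inequality $(R_{\theta}^{-1})_{ii} \geq 1/(R_{\theta})_{ii}$ combined with the uniform upper bound on $K_{\theta}(0) + \delta_{\theta}$ provided by Condition \ref{cond:Ktheta:deltatheta}. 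Finally, $\EE |R_0|^2$ is bounded uniformly in $n$ thanks to Condition \ref{cond:Kzero:deltazero} and a simple case of Lemma \ref{lem:norm:Aun...Ak:carre}. Combining these estimates yields $\EE \, \var( CV_{\theta} | X ) \leq C_{sup}/n$, and hence the quantitative rate $\EE(|CV_{\theta} - \EE(CV_{\theta}|X)|) \leq C_{sup}/\sqrt{n} \to 0$. No substantial obstacle is expected, since the diagonal factor $(\diag R_{\theta}^{-1})^{-2}$ is cleanly absorbed into $||M_{\theta}||$ once Lemma \ref{lem:controle:Rmun:diagRmun} is at hand.
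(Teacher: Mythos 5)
Your proposal is correct and follows essentially the same route as the paper, which reduces the lemma to the argument of Lemma \ref{lem:var:given:X:Ltheta} with $R_{\theta}^{-1}$ replaced by $M_{\theta} = R_{\theta}^{-1}\left(\diag(R_{\theta}^{-1})\right)^{-2}R_{\theta}^{-1}$: Jensen twice, the Gaussian quadratic-form variance identity, and the uniform spectral bound on $M_{\theta}$ from Lemma \ref{lem:controle:Rmun:diagRmun}. The only cosmetic difference is that you bound $\EE\left(|R_0 M_{\theta}|^2\right)$ by hand via $|R_0 M_{\theta}|^2 \leq ||M_{\theta}||^2 |R_0|^2$ and Lemma \ref{lem:norm:Aun...Ak:carre}, where the paper invokes Lemma \ref{lem:alternate:Rmun} for the same submultiplicativity step.
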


\begin{proof}[Proof of Lemma \ref{lem:var:given:X:CVtheta}]

From \eqref{eq:CVtheta}, we have $CV_{\theta} = y^t M_{\theta} y$, with $M_{\theta} = (1/n) R_{\theta}^{-1} diag(R_{\theta}^{-1})^{-2} R_{\theta}^{-1}$. Because of Lemma \ref{lem:controle:Rmun:diagRmun}, the eigenvalues of $M_{\theta}$ are bounded uniformly in $n,X,\theta$ by a finite constant $C_{sup}$. Thus, the proof of the lemma is exactly the same as that of Lemma \ref{lem:var:given:X:Ltheta}, with $R_{\theta}^{-1}$ replaced by $M_{\theta}$.

\end{proof}

\begin{defi} \label{def:blockwise}

Consider a fixed $\theta \in \Theta$.
Consider two functions of $n$: $n_2(n) \in \NN^*$ and $\Delta(n) \geq 0$, that we write $n_2$ and $\Delta$ for simplicity, so that, for any $n \in \NN^*$, $n_2$ can be written $n_2 = N_2^d$, with $N_2 \in \NN^*$, and so that $n = n_2 \Delta$. 
Let, for $i=1,...,N_2-1$, $c_i = [((i-1)/N_2)n^{1/d},(i/N_2)n^{1/d})$. Let $c_{N_2} = [((N_2-1)/N_2)n^{1/d},n^{1/d}]$. Let, for $x \in [0,n^{1/d}]$, $i(x)$ be the unique $i \in \{1,...,N_2\}$ so that $x \in c_i$. Let, for $t=(t_1,...,t_d)^t \in [0,n^{1/d}]^d$, $C(t) = \prod_{j=1}^d c_{i(t_j)}$.
Define the non-stationary covariance function $\tilde{K}_{\theta}(t_1,t_2) = K_{\theta}(t_1,t_2) \indun_{C(t_1) = C(t_2)}$.  Define $\Rt_{\theta}$, $\Rt_{i,\theta}$, $\rt_{i,\theta}$, $\hatty_{i,\theta}$, $\tilde{CV}_{\theta}$ similarly to $R_{\theta}$, $R_{i,\theta}$, $r_{i,\theta}$, $\hat{y}_{i,\theta}$, $CV_{\theta}$ but with $K_{\theta}$  replaced by $\Kt_{\theta}$. 
Furthermore, let us write the $n_2$ aforementioned sets of the form $\prod_{j=1}^d c_{i_j}$, for $i_1,...,i_d \in \{1,...,N_2\}$, as the sets $C_1,...,C_{n_2}$. [The specific one-to-one correspondence we use between $\{1,...,N_2\}^d$ and $\{1,...,n_2\}$ is of no interest. Note that this one-to-one correspondence depends on $n$. The sets $C_1,...,C_{n_2}$ also depend of $n$, but we drop this dependence in the notation for simplicity.]

Let $N_i$ be the random number of observation points in $C_i$ and let $X^i$ be the random $N_i$-tuple obtained from $X$ by keeping only the observation points that are in $C_i$ and by preserving the order of the indices in $X$.
Let $y^i$ be the column vector of size $N_i$, composed by the components $y_j$ of $y$ for which $X_j$ is in $C_i$ (preserving the order of indexes).
Let $\Rb_{i,\theta}$ and $\Rb_{i,0}$ be the covariance matrices, under $(K_{\theta},\delta_{\theta})$ and $(K_0,\delta_0)$, of $y^i$, given $X$.

Finally, for $1 \leq i,j \leq n_2$, let $v_i$ and $w_j$ be two $N_i \times 1$ and $N_j \times 1$ vectors and $M^{ij}$ be a $N_i \times N_j$ matrix.
Then we use the convention that, when $N_i=0$, $|M^{ij}| = ||M^{ij}|| = 0$, $||v_i||=|v_i|=0$ and $v_i^t M^{ij} w_j=0$.
Furthermore, if $i=j$ and $M^{ii}$ is invertible when $N_i  \geq 1$, we use the convention that $v_i^t (M^{ii})^{-1} w_i = 0$ when $N_i=0$.
[These conventions enable to write equalities or inequalities involving matrices and vectors of size $N_i$, $N_j$ or $N_i \times N_j$, that hold regardless of whether $N_i$ or $N_j$ are zero or not. As can be checked along the proofs involving Definition \ref{def:blockwise}, these relations boil down to trivial relations (e.g. $0=0$) when $N_i=0$ or $N_j=0$. This way of proceeding considerably simplifies the exposition in these proofs.]

\end{defi}

\begin{lem} \label{lem:blockwise}

Consider a fixed $\theta \in \Theta$.
In the context of Definition \ref{def:blockwise}, if $n_2 = o(n)$,
\[
\EE \left( \left| CV_{\theta} - \tilde{CV}_{\theta} \right| \right) \to_{n \to \infty} 0.
\]

\end{lem}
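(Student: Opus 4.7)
The plan is to express the difference as a quadratic form in $y$. Writing $M_\theta = (1/n) R_\theta^{-1} \diag(R_\theta^{-1})^{-2} R_\theta^{-1}$ and analogously $\tilde{M}_\theta$ with $R_\theta$ replaced by $\Rt_\theta$, one has $CV_\theta - \tilde{CV}_\theta = y^t (M_\theta - \tilde{M}_\theta) y$. Conditioning on $X$ and applying the standard split of a mean-zero Gaussian quadratic form into conditional mean and fluctuation (exactly as in the proof of Lemma \ref{lem:var:given:X:Ltheta}), it suffices to show that $\EE|\Tr(R_0(M_\theta - \tilde{M}_\theta))| \to 0$ and $\EE\sqrt{\Tr((R_0(M_\theta - \tilde{M}_\theta))^2)} \to 0$.

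The next step is a bound of the form $|M_\theta - \tilde{M}_\theta| \leq (C_{sup}/n)\,|R_\theta - \Rt_\theta|$. After reordering the observation vector by block index, $\Rt_\theta$ is block-diagonal with blocks $\Rb_{i,\theta}$, each of the exact type treated by Lemma \ref{lem:controle:Rmun:diagRmun}, so that $\|\Rt_\theta^{-1}\|$ and $\|\diag(\Rt_\theta^{-1})^{-1}\|$ are bounded uniformly in $n$, $X$, $\theta$. Combining this with the same uniform bounds applied to $R_\theta^{-1}$, a telescoping of $M_\theta - \tilde{M}_\theta$ into three products involving either $R_\theta^{-1} - \Rt_\theta^{-1}$ or $\diag(R_\theta^{-1})^{-2} - \diag(\Rt_\theta^{-1})^{-2}$, the resolvent identity $R_\theta^{-1} - \Rt_\theta^{-1} = \Rt_\theta^{-1}(\Rt_\theta - R_\theta) R_\theta^{-1}$, and the inequality $|AB| \leq \|A\|\,|B|$, yields the claim.

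The crucial input is then $\EE|R_\theta - \Rt_\theta|^2 \to 0$, which I would derive directly from the polynomial decay in Condition \ref{cond:Ktheta:deltatheta}. Since $(R_\theta - \Rt_\theta)_{ij} = K_\theta(X_i - X_j)\indun_{C(X_i) \neq C(X_j)}$ and $X_1,\ldots,X_n$ are i.i.d. uniform on $[0,n^{1/d}]^d$,
\[
\EE|R_\theta - \Rt_\theta|^2 \leq \frac{n-1}{n^2}\int_{[0,n^{1/d}]^d} \int_{x_2 \notin C(x_1)} \frac{C_{sup}}{(1+|x_1-x_2|^{d+1})^2}\,dx_2\,dx_1.
\]
Bounding the inner integral by $C_{sup}/(1+d(x_1,\partial C(x_1))^{d+2})$ and integrating over a single block of side $\Delta^{1/d}$ (only a boundary layer of thickness $O(1)$ contributes substantially, giving $O(\Delta^{(d-1)/d})$ per block), then summing over the $n_2$ blocks, yields $\EE|R_\theta - \Rt_\theta|^2 \leq C_{sup}\,\Delta^{-1/d}$, which vanishes since $n_2 = o(n)$ forces $\Delta \to \infty$.

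Putting the pieces together, $\EE|\Tr(R_0(M_\theta - \tilde{M}_\theta))| \leq n\sqrt{\EE|R_0|^2}\,\sqrt{\EE|M_\theta - \tilde{M}_\theta|^2} \leq C_{sup}\sqrt{\EE|R_\theta - \Rt_\theta|^2} \to 0$ by Cauchy--Schwarz and Lemma \ref{lem:norm:Aun...Ak:carre}; the variance term is handled similarly by invoking Lemma \ref{lem:alternate:Rmun} to control $\EE|R_0(M_\theta - \tilde{M}_\theta)|^2$ by a constant times $\EE|M_\theta - \tilde{M}_\theta|^2$. The main obstacle I anticipate is the boundary-layer estimate: it must be carried out carefully enough to retain the $\Delta^{-1/d}$ gain (capturing only those cross-block pairs for which the polynomial decay is genuinely effective), and in parallel one must check that Lemma \ref{lem:controle:Rmun:diagRmun} transfers block-by-block to $\Rt_\theta^{-1}$ with no loss of uniformity in $n$ and $\theta$.
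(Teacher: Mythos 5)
Your proof is correct, but it takes a genuinely different route from the paper's. The paper stays with the leave-one-out representation: by exchangeability of $X_1,\dots,X_n$ it reduces to the single residual difference $(y_1-\hat y_{1,\theta})^2-(y_1-\hatty_{1,\theta})^2$, then splits the perturbation into the change of the cross-covariance vector ($\rt_{1,\theta}-r_{1,\theta}$) and the change of the inverse ($\Rt_{1,\theta}^{-1}-R_{1,\theta}^{-1}$), controlling the former with the integral estimates of Lemmas \ref{lem:integrale:abc:T} and \ref{lem:integrale:abf} and the latter with the fourth-moment bound of Lemma \ref{lem:norm:Run:moins:Rt1:carre:carre}. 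You instead work globally with the matrix form \eqref{eq:CVtheta}, perturb $M_{\theta}=(1/n)R_{\theta}^{-1}\diag(R_{\theta}^{-1})^{-2}R_{\theta}^{-1}$ by telescoping plus the resolvent identity, and reduce everything to the elementary second-moment estimate $\EE|R_{\theta}-\Rt_{\theta}|^2=O(\Delta^{-1/d})$; your boundary-layer computation of that estimate is right (only the diagonal pairs drop out, the inner integral is $O\bigl(1/(1+D_{\Delta}(x_1)^{d+2})\bigr)$, giving $O(\Delta^{(d-1)/d})$ per block, times $n_2$ blocks, divided by $n=n_2\Delta$). The uniform spectral bounds you need for $\Rt_{\theta}^{-1}$ and $\diag(\Rt_{\theta}^{-1})^{-1}$ are exactly the content of Lemma \ref{lem:controle:Rmun:diagRmun:tilde}, and your block-diagonal justification is the correct way to obtain them; the $\diag$-difference in your telescoping reduces to $|R_{\theta}^{-1}-\Rt_{\theta}^{-1}|$ via $|\diag(A)|\le|A|$, so that step is also sound. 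What your route buys is the elimination of the three auxiliary lemmas just cited in favour of one direct computation; a small further simplification you could note is that for the conditional fluctuation term the crude bound $\|M_{\theta}-\tilde{M}_{\theta}\|\le C_{sup}/n$ already gives $\EE\,\var(y^t(M_{\theta}-\tilde{M}_{\theta})y\,|\,X)\le C_{sup}\,\EE|R_0|^2/n\to 0$ without invoking Lemma \ref{lem:alternate:Rmun} or the blockwise structure at all, so only the conditional-mean term genuinely needs the $\Delta^{-1/d}$ gain.
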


\begin{proof}[Proof of Lemma \ref{lem:blockwise}]

Assume that $n_2 = o(n)$, or equivalently that $\Delta \to_{n \to \infty} \infty$. We have
\begin{eqnarray*}
\EE \left( \left| CV_{\theta} - \tilde{CV}_{\theta} \right| \right) & \leq &
\frac{1}{n} \sum_{i=1}^n \EE \left( \left| (y_i - \hat{y}_{i,\theta})^2 - (y_i - \hatty_{i,\theta})^2 \right| \right) \\
 \mbox{(symmetry:)} & = & \EE \left( \left| (y_1 - \hat{y}_{1,\theta})^2 - (y_1 - \hatty_{1,\theta})^2 \right| \right) \\
 & = & \EE \left( \left| (y_1 - r_{1,\theta}^t R_{1,\theta}^{-1} y_{-1})^2 - (y_1 - \rt_{1,\theta}^t \Rt_{1,\theta}^{-1} y_{-1})^2 \right| \right) \\
  & = & \EE \left( \left| \rt_{1,\theta}^t \Rt_{1,\theta}^{-1} y_{-1}  - r_{1,\theta}^t R_{1,\theta}^{-1} y_{-1} \right| \left| 2 y_1 - r_{1,\theta}^t R_{1,\theta}^{-1} y_{-1} - \rt_{1,\theta}^t \Rt_{1,\theta}^{-1} y_{-1} \right| \right) \\
\mbox{ (Cauchy-Schwarz:)}   & \leq & \sqrt{\EE \left( \left( \rt_{1,\theta}^t \Rt_{1,\theta}^{-1} y_{-1}  - r_{1,\theta}^t R_{1,\theta}^{-1} y_{-1} \right)^2 \right)} \\
& &  \sqrt{ \EE \left( \left( 2 y_1 - r_{1,\theta}^t R_{1,\theta}^{-1} y_{-1} - \rt_{1,\theta}^t \Rt_{1,\theta}^{-1} y_{-1} \right)^2 \right) }
\end{eqnarray*}

Now, the second square root in the above display is bounded, because of $(a+b+c)^2 \leq 3 \left( a^2 + b^2 + c^2 \right)$ and of arguments similar to but simpler than those given in the proof of Lemma \ref{lem:control:derivative:CV}.
Thus it only remains to show that
\[
\EE \left( \left( \rt_{1,\theta}^t \Rt_{1,\theta}^{-1} y_{-1}  - r_{1,\theta}^t R_{1,\theta}^{-1} y_{-1} \right)^2 \right) \to_{n \to \infty} 0.
\]
For this,
\begin{eqnarray} \label{eq:in:proof:lem:blockwise:two:terms}
\EE \left( \left( \rt_{1,\theta}^t \Rt_{1,\theta}^{-1} y_{-1}  - r_{1,\theta}^t R_{1,\theta}^{-1} y_{-1} \right)^2 \right) & \leq & 2 \EE \left( \left( \rt_{1,\theta}^t ( \Rt_{1,\theta}^{-1}  - R_{1,\theta}^{-1} ) y_{-1}  \right)^2 \right)  \\
& & + 2  \EE \left( \left( (\rt_{1,\theta} - r_{1,\theta})^t R_{1,\theta}^{-1} y_{-1}  \right)^2 \right). \nonumber
\end{eqnarray}

We show separately that both terms in the right-hand side of \eqref{eq:in:proof:lem:blockwise:two:terms} converge to $0$.
For the first term,
\begin{eqnarray*}
\EE \left( \left( \rt_{1,\theta}^t ( \Rt_{1,\theta}^{-1}  - R_{1,\theta}^{-1} ) y_{-1}  \right)^2 \right)
& = & \EE \left( \Tr \left[ R_{1,0} ( \Rt_{1,\theta}^{-1}  - R_{1,\theta}^{-1} ) \rt_{1,\theta} \rt_{1,\theta}^t
( \Rt_{1,\theta}^{-1}  - R_{1,\theta}^{-1} ) \right] \right) \\
& \leq & \sum_{i,j=2}^n \EE \left( | ( \Rt_{1,\theta}^{-1}  - R_{1,\theta}^{-1} ) R_{1,0} ( \Rt_{1,\theta}^{-1}  - R_{1,\theta}^{-1} )  |_{i,j} |\rt_{1,\theta} \rt_{1,\theta}^t|_{i,j} \right). 
\end{eqnarray*}
Hence, by the same arguments as after \eqref{eq:cond:on:2n:trick} in the proof of Lemma \ref{lem:control:derivative:CV}, we obtain
\begin{eqnarray*}
\left[ \EE \left( \left( \rt_{1,\theta}^t ( \Rt_{1,\theta}^{-1}  - R_{1,\theta}^{-1} ) y_{-1}  \right)^2 \right) \right]^2 & \leq & C_{sup} 
\EE \left( | ( \Rt_{1,\theta}^{-1}  - R_{1,\theta}^{-1} ) R_{1,0} ( \Rt_{1,\theta}^{-1}  - R_{1,\theta}^{-1} )  |^2 \right)  \\
& \leq & C_{sup} \EE \left( \left\{ ||\Rt_{1,\theta}^{-1} || +|| R_{1,\theta}^{-1}|| \right\} |  R_{1,0} ( \Rt_{1,\theta}^{-1}  - R_{1,\theta}^{-1} )  |^2  \right) \\
\mbox{(Lemmas \ref{lem:controle:Rmun:diagRmun} and \ref{lem:controle:Rmun:diagRmun:tilde}:)} & \leq &
\frac{C_{sup}}{n} \EE \left(  \Tr\left[ ( \Rt_{1,\theta}^{-1}  - R_{1,\theta}^{-1} ) R_{1,0}^2 ( \Rt_{1,\theta}^{-1}  - R_{1,\theta}^{-1} ) \right]  \right) \\
\mbox{(Cauchy-Schwarz:)} & \leq & C_{sup} \sqrt{\EE \left( | ( \Rt_{1,\theta}^{-1}  - R_{1,\theta}^{-1} )^2 |^2 \right)} \sqrt{\EE \left( | R_{1,0}^2 |^2 \right)}. 
\end{eqnarray*}

From Lemma \ref{lem:norm:Aun...Ak:carre}, $\EE \left( | R_{1,0}^2 |^2 \right)$ is bounded, so it remains to show that $\EE \left( | ( \Rt_{1,\theta}^{-1}  - R_{1,\theta}^{-1} )^2 |^2 \right)$ converges to $0$.
For this,
\begin{eqnarray*}
\EE \left( | ( \Rt_{1,\theta}^{-1}  - R_{1,\theta}^{-1} )^2 |^2 \right) & = & 
\EE \left( | ( \Rt_{1,\theta}^{-1} ( R_{1,\theta} - \Rt_{1,\theta} ) R_{1,\theta}^{-1} )^2 |^2 \right) \\
\mbox{(Lemma \ref{lem:controle:Rmun:diagRmun}:)}  & \leq & C_{sup} \EE \left( |  ( R_{1,\theta} - \Rt_{1,\theta} )  R_{1,\theta}^{-1} \Rt_{1,\theta}^{-1} ( R_{1,\theta} - \Rt_{1,\theta} )  |^2 \right) \\
& = & C_{sup} \frac{1}{n} \EE\left(  \Tr \left[  ( R_{1,\theta} - \Rt_{1,\theta} )^2  R_{1,\theta}^{-1} \Rt_{1,\theta}^{-1} ( R_{1,\theta} - \Rt_{1,\theta} )^2  \Rt_{1,\theta}^{-1} R_{1,\theta}^{-1} \right] \right) \\
\mbox{(Cauchy-Schwarz:)}  & \leq & C_{sup} \sqrt{ \EE \left( |  ( R_{1,\theta} - \Rt_{1,\theta} )^2  R_{1,\theta}^{-1} \Rt_{1,\theta}^{-1}  |^2 \right) }  
\sqrt{ \EE \left( |  ( R_{1,\theta} - \Rt_{1,\theta} )^2  \Rt_{1,\theta}^{-1} R_{1,\theta}^{-1}  |^2 \right) }.   \\
\end{eqnarray*}
Hence, with Lemmas \ref{lem:controle:Rmun:diagRmun}, \ref{lem:controle:Rmun:diagRmun:tilde} and \ref{lem:norm:Run:moins:Rt1:carre:carre}, we conclude that the first term of the right hand side of \eqref{eq:in:proof:lem:blockwise:two:terms} goes to $0$.
Let us now show that the second term of the right hand side of \eqref{eq:in:proof:lem:blockwise:two:terms} goes to $0$. 
We have,
\begin{flalign*}
& \EE \left( \left( (\rt_{1,\theta} - r_{1,\theta})^t R_{1,\theta}^{-1} y_{-1}  \right)^2 \right)  & \\
& = 
\EE \left( \Tr \left( R_{1,\theta}^{-1} R_{1,0} R_{1,\theta}^{-1} (\rt_{1,\theta} - r_{1,\theta})(\rt_{1,\theta} - r_{1,\theta})^t   \right) \right) & \\
 & \leq   \sum_{i,j=2}^n \EE \left( \left| [R_{1,\theta}^{-1} R_{1,0} R_{1,\theta}^{-1}]_{i,j} \right|  \frac{1}{n} \int_{[0,n^{1/d}]^d} \frac{1}{1+|X_i - x_1|^{d+1}} \frac{1}{1+|X_j - x_1|^{d+1}} \indun_{C(X_i) \neq C(x_1)} \indun_{C(X_j) \neq C(x_1)} dx_1 \right), & 
\end{flalign*}
where the last line is obtained similarly to after \eqref{eq:cond:on:2n:trick} in the proof of Lemma \ref{lem:control:derivative:CV}.
Thus we have, with the notation and result of Lemma \ref{lem:integrale:abc:T},
\begin{eqnarray*}
\EE \left( \left( (\rt_{1,\theta} - r_{1,\theta})^t R_{1,\theta}^{-1} y_{-1}  \right)^2 \right)
& \leq & C_{sup} \frac{1}{n} \sum_{i,j=2}^n \EE \left( \left| [R_{1,\theta}^{-1} R_{1,0} R_{1,\theta}^{-1}]_{i,j} \right|  \frac{1}{1+|X_i-X_j|^{d+1}} f(D_{\Delta}(X_i,X_j)) \right) \\
\mbox{(Cauchy-Schwarz:)} & \leq  & \sqrt{ \EE \left( | R_{1,\theta}^{-1} R_{1,0} R_{1,\theta}^{-1} |^2 \right) } 
\\
& & \sqrt{ \frac{1}{n} \sum_{i,j=2}^n  \EE  \left[  \left( \frac{1}{1+|X_i-X_j|^{d+1}} \right)^2 f^2(D_{\Delta}(X_i,X_j)) \right] }.
\end{eqnarray*}
From Lemmas \ref{lem:norm:Aun...Ak:carre}, \ref{lem:controle:Rmun:diagRmun} and \ref{lem:alternate:Rmun}, the first $\sqrt{.}$ in the above display is bounded. Thus it remains to show that the second $\sqrt{.}$ goes to $0$. For this, noting that $f^2(t) \leq C_{sup} f(t)$ and distinguishing the case $i=j$ from the case $i \neq j$,
\begin{flalign} \label{eq:in:proof:lem:blockwise:int:f:Delta}
& \frac{1}{n} \sum_{i,j=2}^n \EE \left[ \left( \frac{1}{1+|X_i-X_j|^{d+1}} \right)^2 f(D_{\Delta}(X_1,X_j)) \right] & \nonumber  \\
& \leq  \frac{C_{sup}}{n} \int_{[0,n^{1/d}]^d} f (D_{\Delta}(x) ) dx 
 + \frac{C_{sup}}{n} \int_{[0,n^{1/d}]^d} dx_1 \int_{[0,n^{1/d}]^d} dx_2  \frac{1}{1+|x_1-x_2|^{d+1}} f(D_{\Delta}(x_1,x_2) ) \nonumber & \\
 & =  \frac{C_{sup}}{n} \int_{[0,n^{1/d}]^d} f (D_{\Delta}(x) ) dx + o(1) ~ ~ ~ \mbox{(Lemma \ref{lem:integrale:abf}).} &
\end{flalign}
Now, for any $\epsilon>0$, there is a finite $T$ so that $f(T) \leq \epsilon$, and by defining $E_n = \{x \in [0,n^{1/d}]^d; D_{\Delta}(x) \leq T \}$, we have $|E_n| = o(n)$, as can be seen easily, and
\[
\frac{1}{n} \int_{[0,n^{1/d}]^d} f (D_{\Delta}(x) ) dx \leq f(0) \frac{|E_n|}{n} + \epsilon.
\]
This finally shows that the second term of the right hand side of \eqref{eq:in:proof:lem:blockwise:two:terms} goes to $0$ which finishes the proof.

\end{proof}

\begin{lem} \label{lem:var:ECV:sachant:X}

For any fixed $\theta \in \Theta$, 
\[
\EE \left( \left| \EE \left(CV_{\theta}\right) - \EE \left[ CV_{\theta} | X \right] \right| \right)
\]
goes to $0$ as $n \to \infty$.

\end{lem}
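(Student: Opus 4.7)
The plan is to reduce, via the blockwise approximation of Definition \ref{def:blockwise}, to showing concentration of a ``decoupled'' version of $\EE[CV_\theta \mid X]$, and then to exploit the resulting block structure through an Efron--Stein-type variance bound. I would choose a block volume $\Delta = \Delta_n$ with $\Delta_n \to \infty$ but $\Delta_n = o(\sqrt{n})$, for instance $N_2 = \lfloor n^{3/(4d)} \rfloor$ giving $\Delta_n \sim n^{1/4}$, so that both the blockwise approximation and the concentration argument have room to operate.

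First, by the triangle inequality,
\[
\EE\bigl|\EE[CV_\theta \mid X] - \EE[CV_\theta]\bigr| \leq \EE\bigl|\EE[CV_\theta \mid X] - \EE[\tilde{CV}_\theta \mid X]\bigr| + \EE\bigl|\EE[\tilde{CV}_\theta \mid X] - \EE[\tilde{CV}_\theta]\bigr| + \bigl|\EE[\tilde{CV}_\theta] - \EE[CV_\theta]\bigr|.
\]
The first and third right-hand terms are both bounded by $\EE|CV_\theta - \tilde{CV}_\theta|$, by conditional and unconditional Jensen respectively, and this tends to zero by Lemma \ref{lem:blockwise} since $\Delta_n \to \infty$. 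By Jensen once more, the middle term is at most $\sqrt{\var(\EE[\tilde{CV}_\theta \mid X])}$, so it suffices to prove $\var(\EE[\tilde{CV}_\theta \mid X]) \to 0$.

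To handle this variance, I would exploit the fact that each blockwise LOO prediction $\hatty_{i,\theta}$ uses only observations in the same cell $C_k$ as $X_i$. Taking conditional expectations under $y \mid X \sim \N(0, R_0)$ yields
\[
f(X) := \EE[\tilde{CV}_\theta \mid X] = \frac{1}{n} \sum_{k=1}^{n_2} T_k(X^k), \qquad T_k(X^k) := \sum_{i : X_i \in C_k} \EE\bigl[(y_i - \hatty_{i,\theta})^2 \mid X\bigr],
\]
where $T_k$ depends on $X$ only through the points $X^k$ of cell $C_k$, since the relevant sub-block of $R_0$ only involves indices in $I_k$. Using the polynomial decay of $K_0$ and $K_\theta$ from Conditions \ref{cond:Kzero:deltazero} and \ref{cond:Ktheta:deltatheta}, the uniform spectral bound on $\Rb_{k,\theta}^{-1}$ from Lemma \ref{lem:controle:Rmun:diagRmun:tilde}, and the moment-bound techniques already used in Lemma \ref{lem:control:derivative:CV}, one obtains the uniform second-moment bound $\EE[T_k(X^k)^2] = O(\Delta_n^2)$.

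Finally I would apply the Efron--Stein inequality to $f$. Replacing $X_1$ by an independent copy $X_1'$ alters at most the two summands indexed by the cell $k_0$ containing $X_1$ and the cell $k_0'$ containing $X_1'$; Cauchy--Schwarz applied to the four terms $T_{k_0}(X^{k_0})$, $T_{k_0}(X^{k_0} \setminus \{X_1\})$, $T_{k_0'}(X^{k_0'} \cup \{X_1'\})$, $T_{k_0'}(X^{k_0'})$ together with the preceding second-moment bound yields $\EE[(f(X) - f(X^{(1)}))^2] = O(\Delta_n^2/n^2)$. Summing via Efron--Stein and symmetry across the $n$ coordinates then gives $\var(f) = O(\Delta_n^2/n) = o(1)$, which concludes the proof. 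The main technical obstacle is the uniform $L^2$ control on $T_k(X^k)$ in the presence of possibly large, random cell occupancies, since clusters of closely spaced points are not excluded under Condition \ref{cond:iid_obs_points}; this is precisely what Lemma \ref{lem:controle:Rmun:diagRmun:tilde} combined with the polynomial covariance decay is designed to overcome.
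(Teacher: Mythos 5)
Your proof is correct, and the first two steps coincide with the paper's: the same triangle-inequality reduction via Lemma \ref{lem:blockwise} to showing $\var(\EE[\tilde{CV}_{\theta}\mid X])\to 0$ for a suitable $\Delta\to\infty$, and the same block decomposition $\EE[\tilde{CV}_{\theta}\mid X]=\frac{1}{n_2}\sum_k f_k(X)$ with $f_k=\frac{1}{\Delta}T_k$ depending only on $X^k$, together with the second-moment bound $\EE[f_k^2]=O(1)$ (the paper gets $\EE(f_k^2\mid N_k=N)\leq C_{sup}(1+N^4/\Delta^4)$ via the trace formula $f_k=\frac{1}{\Delta}\Tr(\Rb_{k,0}\Rb_{k,\theta}^{-1}\diag(\Rb_{k,\theta}^{-1})^{-2}\Rb_{k,\theta}^{-1})$ and Lemmas \ref{lem:controle:Rmun:diagRmun:bar} and \ref{lem:moment:binomial}; note your spectral-bound citation should be Lemma \ref{lem:controle:Rmun:diagRmun:bar}, not \ref{lem:controle:Rmun:diagRmun:tilde}). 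Where you genuinely diverge is the final concentration step: the paper invokes its bespoke Lemma \ref{lem:cvg:mean:fk}, which exploits the multinomial conditional-independence structure of Lemma \ref{lem:dist:fk} (blocks are independent given the occupancy counts, and the counts are weakly dependent) to compute the variance of $\frac{1}{n_2}\sum_k f_k$ directly, under the constraint $\Delta=O(n^{1/(2q+5)})$. You instead apply Efron--Stein to $f(X)=\frac{1}{n}\sum_k T_k(X^k)$: resampling one point perturbs at most two cell functionals, each with second moment $O(\Delta^2)$ (the conditioning on $X_1\in C_{k_0}$ only shifts $N_{k_0}$ to $1+B(n-1,1/n_2)$, which preserves the moment order), giving $\var(f)=O(\Delta^2/n)$ and hence the conclusion for any $\Delta=o(\sqrt{n})$. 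This is a legitimate and arguably more elementary route that avoids the supplementary-material machinery of Lemmas \ref{lem:dist:fk} and \ref{lem:cvg:mean:fk} entirely and yields a more generous range of admissible $\Delta$; what the paper's approach buys in exchange is a reusable general lemma that it applies again verbatim in the proof of Lemma \ref{lem:var:E:Entheta:sachant:X}, whereas your Efron--Stein argument would need to be repeated (though it would work there too).
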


\begin{proof}[Proof of Lemma \ref{lem:var:ECV:sachant:X}]

Fix $\theta \in \Theta$. Because of Lemma \ref{lem:blockwise} and of $|\EE(.)| \leq \EE(|.|)$, it is sufficient to show that there exists a sequence $\Delta \to + \infty$ so that the lemma holds with $CV_{\theta}$ replaced by $\tilde{CV}_{\theta}$. Then, because of $\left( \EE(.) \right)^2 \leq \EE \left( (.)^2 \right)$, it is sufficient to show $var \left( \EE \left[ \left. \tilde{CV}_{\theta} \right| X \right] \right) \to_{n \to \infty} 0$.

Let $C_1,...,C_{n_2}$ be as in Definition \ref{def:blockwise}. Define, for $k=1,...,n_2$,
\[
f_k(X) = \frac{1}{\Delta} \sum_{X_i \in C_k} \EE \left( \left. \left[ y_i - \tilde{\hat{y}}_{i,\theta} \right]^2 \right| X  \right). 
\]
[Note that, following the discussion in Definition \ref{def:blockwise}, we have $f_k(X) =0$ if $N_k=0$ and $f_k(X) = K_0(0) + \delta_0$ if $N_k=1$.]
Then $ \EE \left( \tilde{CV}_{\theta} | X \right) = (1/n_2) \sum_{k=1}^{n_2} f_k(X)$. Let $\Rb_{k,\theta}$ and $\Rb_{k,0}$ be as in Definition \ref{def:blockwise}. Because of the definition of $\tilde{K}$ and by \eqref{eq:CVtheta}, we have 
\[
f_k(X) = \frac{1}{\Delta} \Tr \left( \Rb_{k,0} \Rb_{k,\theta}^{-1} diag( \Rb_{k,\theta}^{-1} )^{-2} \Rb_{k,\theta}^{-1} \right).
\]

The functions $f_k(X)$ satisfy the conditions of Lemma \ref{lem:dist:fk}. Furthermore, by using the notation $N_k$ of Lemma \ref{lem:dist:fk}, we have
\begin{eqnarray*}
\EE \left( \left. f^2_{k}(X) \right| N_k = N \right) & = & \frac{1}{\Delta^2} \EE \left( \left. \left[ \Tr \left( \Rb_{k,0} \Rb_{k,\theta}^{-1} diag( \Rb_{k,\theta}^{-1} )^{-2} \Rb_{k,\theta}^{-1} \right) \right]^2 \right| N_k = N \right) \\
\mbox{(Cauchy-Schwarz:)} & \leq & \frac{1}{\Delta^2} \EE \left( \left. N^2 |\Rb_{k,0}|^2 
   | \Rb_{k,\theta}^{-1} diag( \Rb_{k,\theta}^{-1} )^{-2} \Rb_{k,\theta}^{-1}  |^2 \right| N_k=N \right) \\
\mbox{(Lemma \ref{lem:controle:Rmun:diagRmun:bar}:)} & \leq & C_{sup} \frac{N^2}{\Delta^2} \EE \left( \left. |\Rb_{k,0}|^2 \right| N_k=N \right) \\
\mbox{(Condition \ref{cond:Kzero:deltazero} and Lemma \ref{lem:dist:fk}:)} & \leq & C_{sup} \frac{N^2}{\Delta^2}  \left( 1 +  \frac{N}{\Delta^2} \int_{[0,\Delta^{1/d}]^d} \int_{[0,\Delta^{1/d}]^d} \frac{1}{1+|x_1-x_2|^{d+1}} dx_1 dx_2 \right) \\
& \leq & C_{sup} \left( \frac{N^2}{\Delta^2} + \frac{N^3}{\Delta^3} \right) \\
& \leq & C_{sup} \left( 1 + \frac{N^4}{\Delta^4} \right). \\
\end{eqnarray*} 
Thus, because of Lemma \ref{lem:cvg:mean:fk}, there exists a sequence $\Delta \to_{n \to \infty} \infty$ so that $var \left( \EE \left[ \left. \tilde{CV}_{\theta} \right| X \right] \right) \to_{n \to \infty} 0$, which completes the proof.
\end{proof}

\begin{lem} \label{lem:blockwise:Entheta}
Let, with the notation of Definition \ref{def:blockwise}, $\tilde{E}_{n,\theta}$ be defined as $E_{n,\theta}$, with $K_{\theta}$ replaced by $\Kt_{\theta}$. Fix $\theta \in \Theta$. Then, if $n_2 = o(n)$, 
\[
\EE \left( \left| E_{n,\theta} - \tilde{E}_{n,\theta} \right| \right) \to_{n \to \infty} 0.
\]
\end{lem}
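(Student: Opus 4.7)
The plan is to closely mimic the proof of Lemma \ref{lem:blockwise}, exploiting the fact already stressed in the remark preceding this lemma: the integration variable $t$ in the definition of $E_{n,\theta}$ plays the role of an auxiliary uniform point $X_{n+1}$ on $[0,n^{1/d}]^d$, independent of $(X_1,\ldots,X_n,Y,\epsilon)$. Under this identification, $Y(t)$ plays the role that $y_1$ played there (but without noise), $\hat{y}_\theta(t)=r_\theta(t)^t R_\theta^{-1} y$ plays the role of $\hat{y}_{1,\theta}$, and similarly $\hatty_\theta(t)=\rt_\theta(t)^t \Rt_\theta^{-1} y$ plays the role of $\hatty_{1,\theta}$.

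First, using $a^2-b^2=(a-b)(a+b)$ and Cauchy--Schwarz, I would bound
\begin{align*}
\EE\bigl(|E_{n,\theta}-\tilde{E}_{n,\theta}|\bigr)
\leq \sqrt{\frac{1}{n}\int_{[0,n^{1/d}]^d}\EE\bigl((\hat{y}_\theta(t)-\hatty_\theta(t))^2\bigr)dt}\;
\sqrt{\frac{1}{n}\int_{[0,n^{1/d}]^d}\EE\bigl((\hat{y}_\theta(t)+\hatty_\theta(t)-2Y(t))^2\bigr)dt}.
\end{align*}
The second factor is bounded uniformly in $n$ by the same computations used in the proof of Lemma \ref{lem:control:derivative:CV} (with $t$ in place of $X_1$), combined with $\EE(Y(t)^2)=K_0(0)$. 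So the task reduces to showing that the first factor tends to $0$.

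For the first factor, I would split
\[
\hat{y}_\theta(t)-\hatty_\theta(t)=r_\theta(t)^t(R_\theta^{-1}-\Rt_\theta^{-1})y + (r_\theta(t)-\rt_\theta(t))^t\Rt_\theta^{-1}y
\]
exactly as in the decomposition \eqref{eq:in:proof:lem:blockwise:two:terms}, and control each piece separately after rewriting $\frac{1}{n}\int_{[0,n^{1/d}]^d}dt$ as $\EE_{X_{n+1}}$. For the first piece, the identity $R_\theta^{-1}-\Rt_\theta^{-1}=\Rt_\theta^{-1}(\Rt_\theta-R_\theta)R_\theta^{-1}$, Cauchy--Schwarz on traces, and the spectral bounds in Lemmas \ref{lem:controle:Rmun:diagRmun} and \ref{lem:controle:Rmun:diagRmun:tilde} reduce the estimate to showing that $\EE(|(R_\theta^{-1}-\Rt_\theta^{-1})^2|^2)\to 0$, which is precisely the computation carried out in Lemma \ref{lem:blockwise} and is delivered by Lemma \ref{lem:norm:Run:moins:Rt1:carre:carre}. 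For the second piece, the crucial observation is that $(r_\theta(t)-\rt_\theta(t))_j=K_\theta(t-X_j)\indun_{C(t)\neq C(X_j)}$, so the contribution is supported on pairs $(t,X_j)$ lying in different blocks; using Condition \ref{cond:Ktheta:deltatheta} and integrating out $X_{n+1}=t$ as in the end of the proof of Lemma \ref{lem:blockwise}, one obtains a bound of the form
\begin{align*}
C_{sup}\sqrt{\EE\bigl(|\Rt_\theta^{-1}R_0\Rt_\theta^{-1}|^2\bigr)}\sqrt{\frac{1}{n}\int_{[0,n^{1/d}]^d}f(D_\Delta(t))\,dt+o(1)},
\end{align*}
where the off-diagonal $o(1)$ term is furnished by Lemma \ref{lem:integrale:abf} and the first $\sqrt{\cdot}$ is bounded by Lemmas \ref{lem:norm:Aun...Ak:carre}, \ref{lem:controle:Rmun:diagRmun} and \ref{lem:alternate:Rmun}.

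The main obstacle, just as in Lemma \ref{lem:blockwise}, is the second piece, since here $t$ ranges over the entire domain $[0,n^{1/d}]^d$ rather than being tied to a single observation point. The resolution is geometric: for any $\epsilon>0$, choose $T$ with $f(T)\leq\epsilon$, and observe that the set $\{t\in[0,n^{1/d}]^d : D_\Delta(t)\leq T\}$ consists of a thin boundary layer around the block faces with Lebesgue measure at most $C_{sup}\,n\,T/\Delta^{1/d}=o(n)$ as $\Delta\to\infty$. Since $f(0)$ is finite, this gives $\frac{1}{n}\int f(D_\Delta(t))\,dt\to 0$, completing the argument. No genuinely new technique is needed beyond what Lemma \ref{lem:blockwise} has already developed; the translation amounts to replacing the symmetry argument over observation points by the equivalent integration against a uniform point independent of the data.
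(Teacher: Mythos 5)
Your proposal is correct and follows essentially the same route as the paper: reduce the integral $\frac{1}{n}\int_{[0,n^{1/d}]^d} dt$ to an expectation over an auxiliary uniform point $X_{n+1}$ independent of the data, and then run the argument of Lemma \ref{lem:blockwise} (same two-term decomposition, same use of Lemmas \ref{lem:controle:Rmun:diagRmun}, \ref{lem:controle:Rmun:diagRmun:tilde}, \ref{lem:norm:Run:moins:Rt1:carre:carre}, \ref{lem:integrale:abc:T} and \ref{lem:integrale:abf}, and the same boundary-layer estimate for $\frac{1}{n}\int f(D_\Delta(t))\,dt$). The paper's proof is in fact terser — it simply states that the rest is carried out as in Lemma \ref{lem:blockwise} with $n+1$ points instead of $n$ — so your version just spells out the details it leaves implicit.
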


\begin{proof}[Proof of Lemma \ref{lem:blockwise:Entheta}]

We have, by letting $\tilde{\hat{y}}_{\theta}(t)$ be as $\hat{y}_{\theta}(t)$, with $K_{\theta}$ replaced by $\tilde{K}_{\theta}$.
\begin{eqnarray*}
\EE \left( \left| E_{n,\theta} - \tilde{E}_{n,\theta} \right| \right) & = & 
\EE \left( \left| \frac{1}{n} \int_{[0,n^{1/d}]^d} \left[ Y(t) - \hat{y}_{\theta}(t) \right]^2 dt - \frac{1}{n} \int_{[0,n^{1/d}]^d} \left[ Y(t) - \hatty_{\theta}(t) \right]^2 dt \right| \right) \\
& \leq & \EE \left(  \frac{1}{n} \int_{[0,n^{1/d}]^d} \left| \left[ Y(t) - \hat{y}_{\theta}(t) \right]^2 - \left[ Y(t) - \hatty_{\theta}(t) \right]^2  \right| dt \right) \\
\end{eqnarray*}

The variable $t$ in the integral above is formally equivalent to a new observation point $X_{n+1}$, so that $X_1,...,X_{n+1}$ are independent and uniformly distributed on $[0,n^{1/d}]^d$. Thus,

\begin{eqnarray*}
\EE \left( \left| E_{n,\theta} - \tilde{E}_{n,\theta} \right| \right)
& \leq & \EE \left( \left| \left( Y(X_{n+1}) -\hat{y}_{\theta}(X_{n+1}) \right)^2 - \left( Y(X_{n+1}) -\tilde{\hat{y}}_{\theta}(X_{n+1}) \right)^2  \right| \right).
\end{eqnarray*}
The rest of the proof is carried out as in Lemma \ref{lem:blockwise}, the only difference being that there are $n+1$ observation points instead of $n$.

\end{proof}

\begin{lem} \label{lem:var:given:X:Entheta}

For any fixed $\theta \in \Theta$ we have
\[
\EE \left(  \left| E_{n,\theta} - \EE( E_{n,\theta} | X ) \right| \right) \to_{n \to \infty} 0.
\]

\end{lem}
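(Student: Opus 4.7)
The plan is to mimic the strategy of Lemmas \ref{lem:var:given:X:Ltheta} and \ref{lem:var:given:X:CVtheta}. By Jensen's inequality applied twice,
\[
\EE \left| E_{n,\theta} - \EE(E_{n,\theta}|X) \right| \leq \sqrt{\EE \var(E_{n,\theta}|X)},
\]
so it suffices to show $\EE \var(E_{n,\theta}|X) \to 0$ as $n \to \infty$. Conditional on $X$, the process $Z(t) := \hat{y}_\theta(t) - Y(t) = r_\theta(t)^T R_\theta^{-1} y - Y(t)$ is centered Gaussian with covariance kernel
\[
C_X(s,t) = K_0(s-t) - r_\theta(s)^T R_\theta^{-1} r_0(t) - r_0(s)^T R_\theta^{-1} r_\theta(t) + r_\theta(s)^T R_\theta^{-1} R_0 R_\theta^{-1} r_\theta(t),
\]
and $E_{n,\theta} = (1/n) \int Z(t)^2 dt$. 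Applying Isserlis' formula to this integrated squared Gaussian process gives
\[
\var(E_{n,\theta}|X) = \frac{2}{n^2} \int_{[0,n^{1/d}]^d} \int_{[0,n^{1/d}]^d} C_X(s,t)^2 \, ds \, dt.
\]

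Using $(a+b+c+d)^2 \leq 4(a^2+b^2+c^2+d^2)$, I reduce the task to bounding $\EE \int\int F(s,t)^2 \, ds \, dt = O(n)$ for each of the four summands $F$ in $C_X$. The first term, $\int\int K_0(s-t)^2 \, ds \, dt$, is $O(n)$ directly by the polynomial decay in Condition \ref{cond:Kzero:deltazero}. For the other three terms, which all have the generic form $v(s)^T A w(t)$ with $v, w \in \{r_\theta, r_0\}$ and $A$ a product of the matrices $R_\theta^{-1}$ and $R_0$, integrating the square over $s$ and $t$ yields a trace involving the Gram matrices
\[
H_\theta := \int r_\theta(s) r_\theta(s)^T ds, \qquad H_0 := \int r_0(s) r_0(s)^T ds,
\]
whose entries are bounded by $C/(1+|X_i-X_j|^{d+1})$ by Lemma \ref{lem:integrale:abc}. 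Hence $H_\theta$ and $H_0$ inherit the decay structure of the covariance matrices $R_\theta$ and $R_0$, and the normalized-Frobenius controls behind Lemmas \ref{lem:norm:Aun...Ak:carre}, \ref{lem:controle:Rmun:diagRmun} and \ref{lem:alternate:Rmun} apply to products involving them.

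Each resulting trace is then bounded via $|\Tr(M^2)| \leq \|M\|_F^2 = n |M|^2$ together with the above lemmas, producing the $O(n)$ expected bound per summand. Summing the four contributions yields $\EE \int\int C_X(s,t)^2 \, ds \, dt = O(n)$, hence $\EE \var(E_{n,\theta}|X) = O(1/n) \to 0$, which closes the argument. The main technical obstacle is the fourth summand, quadratic in $R_0$: the corresponding trace $\Tr\bigl((R_\theta^{-1} R_0 R_\theta^{-1} H_\theta)^2\bigr)$ cannot be bounded by $\|R_0\|_{op}^2$ times simpler quantities, since under independent uniform sampling $\|R_0\|_{op}$ is not bounded in probability (clusters of nearby observation points may arise). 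It must instead be cast as $\|R_0 R_\theta^{-1} H_\theta R_\theta^{-1}\|_F^2$ via $|\Tr(M \cdot M)| \leq \|M\|_F^2$, and the normalized Frobenius norm of this four-fold product controlled in expectation, in the same spirit as the treatment of terms such as $|R_0 M_\theta|^2$ in the proof of Lemma \ref{lem:control:derivative:CV}.
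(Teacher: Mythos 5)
Your proof is correct, but it takes a genuinely different route from the paper's. The paper does not attack $\var(E_{n,\theta}\mid X)$ directly: it first replaces $K_\theta$ by the block-diagonal truncation $\tilde K_\theta$ of Definition \ref{def:blockwise} (via Lemma \ref{lem:blockwise:Entheta}), so that the predictor at $t$ only uses the observations falling in the same cell $C(t)$; it then applies the same Gaussian identity $\cov(A^2,B^2)=2\cov(A,B)^2$ to $\tilde E_{n,\theta}$ and bounds the resulting four terms $T_1,\dots,T_4$ by crude per-cell estimates in terms of the cell counts $N_i$ (Lemmas \ref{lem:controle:Rmun:diagRmun:bar}, \ref{lem:moment:binomial}, \ref{lem:sum:d:Ci:Cj}), finally tuning $\Delta=n^{1/6}$. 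You instead work with the untruncated predictor, observe that conditionally on $X$ the error process $Z(t)$ is centered Gaussian with the kernel $C_X$ you wrote, and convert $\int\int C_X^2$ into traces of products of $R_\theta^{-1}$, $R_0$ and the Gram matrices $H_\theta,H_0$, whose entries decay like $1/(1+|X_i-X_j|^{d+1})$ by Lemma \ref{lem:integrale:abc} and which are therefore exactly the objects Lemmas \ref{lem:norm:Aun...Ak:carre}, \ref{lem:controle:Rmun:diagRmun} and \ref{lem:alternate:Rmun} control. Your route is shorter and quantitatively sharper ($\EE\var(E_{n,\theta}\mid X)=O(1/n)$ versus the paper's $n^{-1/6}$-type rate plus a blockwise approximation error), and it dispenses with Definition \ref{def:blockwise} for this lemma; the paper's blockwise machinery remains indispensable, however, for the companion Lemma \ref{lem:var:E:Entheta:sachant:X}, where the fluctuation is over the random design $X$ rather than over $(Y,\epsilon)$ given $X$, no conditional Gaussian identity is available, and approximate independence across cells is the only leverage — so the paper simply reuses one framework for both statements. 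Two minor points to tidy up in your write-up: the mixed term $\Tr(R_\theta^{-1}H_0R_\theta^{-1}H_\theta)$ is not literally of the form $\Tr(M^2)$, so you should invoke the Cauchy--Schwarz trace bound $|\Tr(MN)|\le\sqrt{\Tr(MM^t)\Tr(NN^t)}=n|M|\,|N|$ there; and the interchange $\var\bigl(\int Z(t)^2dt\mid X\bigr)=2\int\int C_X(s,t)^2\,ds\,dt$ deserves a one-line Fubini justification using continuity of $K_0$ and $K_\theta$ on the compact domain.
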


\begin{proof}[Proof of Lemma \ref{lem:var:given:X:Entheta}]

Because of Lemma \ref{lem:blockwise:Entheta} and using $|\EE(.)| \leq \EE(|.|)$ and $\EE^2(.) \leq \EE( (.)^2 )$, it is sufficient to show that there exists a sequence $\Delta \to_{n \to \infty}$ so that

\[
\EE \left( var \left( \left. \tilde{E}_{n,\theta} \right| X \right) \right)
\]
goes to $0$ as $n \to \infty$.

Let us use the notation $C_1,...,C_{n_2}$ of Definition \ref{def:blockwise}. Let, for $t \in \RR^d$ and $v = (v_1,...,v_m) \in (\RR^d)^m$, $r_{\theta} (t,v) = (K_{\theta}(t,v_1),...,K_{\theta}(t,v_m))^t$. We define $r_0(t,v)$ similarly. Let $y^i$, $\bar{R}_{i,\theta}$ and $\bar{R}_{i,0}$ be as in Definition \ref{def:blockwise}. Let for $i \neq j$, $R_{0} (X^i,X^j) = \left[K_{0}((X^i)_k,(X^j)_l)\right]_{k=1,...,N_i;l=1,...,N_j}$. Let $R_{0} (X^i,X^i) = \left[K_{0}((X^i)_k,(X^i)_l)\right]_{k,l=1,...,N_i} + \delta_0 I_{N_i}$.
Then,
\[
\tilde{E}_{n,\theta} = \frac{1}{n_2} \sum_{i=1}^{n_2} \frac{1}{\Delta} \int_{C_i} dt_i \left[ Y(t_i) - r_{\theta}^t(t_i,X^i) \bar{R}_{i,\theta}^{-1} y^i \right]^2
\]
Hence, using the relation $cov \left( A^2 , B^2 \right) = 2 \left( cov(A,B) \right)^2$, for two centered Gaussian variables $A$ and $B$, we obtain
\begin{flalign*}
& var \left( \left. \tilde{E}_{n,\theta} \right| X \right) & \\
& =  \frac{2}{n_2} \sum_{i=1}^{n_2} \frac{1}{n_2} \sum_{j=1}^{n_2} \frac{1}{\Delta^2} \int_{C_i} dt_i \int_{C_j} dt_j cov^2 \left( \left. \left[ Y(t_i) - r_{\theta}^t(t_i,X^i) \bar{R}_{i,\theta}^{-1} y^i \right] , \left[ Y(t_j) - r_{\theta}^t(t_j,X^j) \bar{R}_{j,\theta}^{-1} y^j \right] \right| X \right) & \\
& =  \frac{1}{n_2} \sum_{i=1}^{n_2} \frac{1}{n_2} \sum_{j=1}^{n_2} \frac{1}{\Delta^2} \int_{C_i} dt_i \int_{C_j} dt_j \left\{  K_0(t_i,t_j) - r_{\theta}^t(t_i,X^i) \bar{R}_{i,\theta}^{-1} r_{0}(t_j,X^i)
- r_{\theta}^t(t_j,X^j) \bar{R}_{j,\theta}^{-1} r_{0}(t_i,X^j) \right. & \\
& ~~~   \left. + r_{\theta}^t(t_i,X^i) \bar{R}_{i,\theta}^{-1} R_0(X^i,X^j) \bar{R}_{j,\theta}^{-1} r_{\theta}(t_j,X^j) \right\}^2. &
\end{flalign*}

Now, we use $(a_1+a_2+a_3+a_4)^2 \leq 4 (a_1^2+a_2^2+a_3^2+a_4^2)$. Hence we obtain
\begin{equation} \label{eq:in:proof:var:given:X:Entheta:def:Tun:quatre}
\EE \left( var \left( \left. \tilde{E}_{n,\theta} \right| X \right) \right) \leq C_{sup} \left( T_1+T_2+T_3+T_4 \right),
\end{equation}
where $T_1,T_2,T_3,T_4$ are defined and treated below, and with $T_2 = T_3$ by symmetry.

For $T_1$,
\begin{eqnarray} \label{eq:in:proof:var:given:X:Entheta:Tun}
T_1 & = & \frac{1}{n_2} \sum_{i=1}^{n_2} \frac{1}{n_2} \sum_{j=1}^{n_2} \frac{1}{\Delta^2} \int_{C_i} dt_i \int_{C_j} dt_j K_0^2(t_i,t_j) \nonumber \\
\mbox{(Condition \ref{cond:Kzero:deltazero}:)} & \leq &  \frac{C_{sup}}{n_2} \sum_{i=1}^{n_2} \frac{1}{n_2} \frac{1}{\Delta^2} \int_{C_i} dt_i \int_{\RR^d} dt \left( \frac{1}{1+|t_i-t|^{d+1}} \right)^2 \nonumber \\
 & \leq & C_{sup} \frac{1}{n_2 \Delta}.
\end{eqnarray}

For $T_2$, using Cauchy-Schwarz and Lemma \ref{lem:controle:Rmun:diagRmun:bar},
\begin{flalign*}
& T_2 & \\
& = \frac{1}{n_2} \sum_{i=1}^{n_2} \frac{1}{n_2} \sum_{j=1}^{n_2} \frac{1}{\Delta^2} \int_{C_i} dt_i \int_{C_j} dt_j 
\EE \left[ \left( r_{\theta}^t(t_i,X^i) \bar{R}_{i,\theta}^{-1} r_{0}(t_j,X^i) \right)^2 \right] & \\
& \leq C_{sup} \frac{1}{n_2} \sum_{i=1}^{n_2} \frac{1}{n_2} \sum_{j=1}^{n_2} \frac{1}{\Delta^2} \int_{C_i} dt_i \int_{C_j} dt_j 
\EE \left[ || r_{\theta}(t_i,X^i)||^2 ||r_{0}(t_j,X^i)||^2  \right]. &
\end{flalign*}

Now, using the notation $N_i$ of Lemma \ref{lem:dist:fk} and Conditions \ref{cond:Kzero:deltazero} and \ref{cond:Ktheta:deltatheta},
\begin{flalign} \label{eq:in:proof:var:given:X:Entheta:Tdeux}
& T_2 & \nonumber \\
& \leq C_{sup} \frac{1}{n_2} \sum_{i=1}^{n_2} \frac{1}{n_2} \sum_{j=1}^{n_2} \frac{1}{\Delta^2} \int_{C_i} dt_i \int_{C_j} dt_j 
\EE \left[ N_i^2  \left\{  \frac{1}{1+d(C_i,C_j)^{d+1}} \right\}^4  \right] & \nonumber \\
& \leq C_{sup} \frac{1}{n_2} \sum_{i=1}^{n_2} \frac{1}{n_2} \sum_{j=1}^{n_2} \Delta^2
 \left\{ \frac{1}{1+d(C_i,C_j)^{d+1}} \right\}  ~ ~ \mbox{(Lemma \ref{lem:moment:binomial})} & \nonumber \\
& \leq C_{sup} \frac{\Delta^2}{n_2} \max_{i=1,...,n_2} \sum_{j=1}^{n_2}   \left\{ \frac{1}{1+d(C_i,C_j)^{d+1}} \right\} & \nonumber \\
& \leq C_{sup} \frac{\Delta^2}{n_2} ~ ~ \mbox{(Lemma \ref{lem:sum:d:Ci:Cj}, and because we will set $\Delta \to_{n \to \infty} \infty$)}.   & 
\end{flalign}

For $T_4$ in \eqref{eq:in:proof:var:given:X:Entheta:def:Tun:quatre}, using Cauchy-Schwarz and Lemma \ref{lem:controle:Rmun:diagRmun:bar},
\begin{flalign} \label{eq:in:proof:var:given:X:Entheta:Tquatre:a}
& T_4 & \nonumber \\
& = \frac{1}{n_2} \sum_{i=1}^{n_2} \frac{1}{n_2} \sum_{j=1}^{n_2} \frac{1}{\Delta^2} \int_{C_i} dt_i \int_{C_j} dt_j 
\EE \left[ \left( r_{\theta}^t(t_i,X^i) \bar{R}_{i,\theta}^{-1} R_0(X^i,X^j) \bar{R}_{j,\theta}^{-1} r_{\theta}(t_j,X^j) \right)^2 \right] & \nonumber \\
& \leq C_{sup} \frac{1}{n_2} \sum_{i=1}^{n_2} \frac{1}{n_2} \sum_{j=1}^{n_2} \frac{1}{\Delta^2} \int_{C_i} dt_i \int_{C_j} dt_j 
\sqrt{ \EE \left[ || r_{\theta}^t(t_i,X^i) ||^4 \right] }
\sqrt{ \EE \left[ ||  R_0(X^i,X^j) \bar{R}_{j,\theta}^{-1} r_{\theta}(t_j,X^j) ||^4 \right] }. &
\end{flalign}

Using Condition \ref{cond:Kzero:deltazero}, Lemma \ref{lem:controle:Rmun:diagRmun:bar} and Lemma \ref{lem:norm:A:b}, we obtain
 
\begin{eqnarray*}
||  R_0(X^i,X^j) \bar{R}_{j,\theta}^{-1} r_{\theta}(t_j,X^j) ||^2 & \leq &
C_{sup} N_i N_j \left\{ \frac{1}{1+d(C_i,C_j)^{d+1}} \right\}^2  ||\bar{R}_{j,\theta}^{-1} r_{\theta}(t_j,X^j)||^2 \\
& \leq & C_{sup} N_i N_j^2 \left\{ \frac{1}{1+d(C_i,C_j)^{d+1}} \right\}^2  .  \\
\end{eqnarray*}

Hence, going back to \eqref{eq:in:proof:var:given:X:Entheta:Tquatre:a},

\begin{flalign} \label{eq:in:proof:var:given:X:Entheta:Tquatre:b}
& T_4 & \nonumber \\
& \leq C_{sup} \frac{1}{n_2} \sum_{i=1}^{n_2} \frac{1}{n_2} \sum_{j=1}^{n_2} \frac{1}{\Delta^2} \int_{C_i} dt_i 
\int_{C_j} dt_j 
\sqrt{ \EE \left[ N_i^2 \right] }
 \left\{ \frac{1}{1+d(C_i,C_j)^{d+1}} \right\}^2
\sqrt{ \EE \left[ N_i^2 N_j^4 \right] } & \nonumber \\
& \leq C_{sup} \frac{1}{n_2} \sum_{i=1}^{n_2} \frac{1}{n_2} \sum_{j=1}^{n_2} \frac{1}{\Delta^2} \int_{C_i} dt_i \int_{C_j} dt_j 
 \left\{ \frac{1}{1+d(C_i,C_j)^{d+1}} \right\}^2
\sqrt{ \EE \left[ N_i^2 \right] }
\sqrt{ \sqrt{\EE \left[ N_i^4 \right]} \sqrt{\EE \left[ N_j^8 \right]} } & \nonumber \\
 & \leq C_{sup} \frac{\Delta^4}{n_2} ~ ~ \mbox{(Lemmas \ref{lem:moment:binomial} and \ref{lem:sum:d:Ci:Cj}).} & 
\end{flalign}

Hence, from \eqref{eq:in:proof:var:given:X:Entheta:Tun}, \eqref{eq:in:proof:var:given:X:Entheta:Tdeux} and \eqref{eq:in:proof:var:given:X:Entheta:Tquatre:b}, we can set $\Delta = n^{1/6}$ to complete the proof.

\end{proof}

\begin{lem} \label{lem:var:E:Entheta:sachant:X}

For any fixed $\theta \in \Theta$, 
\[
\EE \left( \left| \EE \left(E_{n,\theta}\right) - \EE \left[ E_{n,\theta} | X \right] \right| \right)
\]
goes to $0$ as $n \to \infty$.

\end{lem}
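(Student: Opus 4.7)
The plan is to mirror closely the strategy used in the proof of Lemma \ref{lem:var:ECV:sachant:X}. First, by Lemma \ref{lem:blockwise:Entheta} and Jensen's inequality $|\EE(\cdot)| \leq \EE(|\cdot|)$, it is enough to exhibit a sequence $\Delta = \Delta(n) \to \infty$ with $n_2 = n/\Delta$ a perfect $d$-th power such that the conclusion holds with $E_{n,\theta}$ replaced by $\tilde{E}_{n,\theta}$. Then, using $(\EE(\cdot))^2 \leq \EE((\cdot)^2)$, it suffices to prove that $\var(\EE[\tilde{E}_{n,\theta} \mid X]) \to 0$ as $n \to \infty$.

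Second, I would exploit the block-diagonal structure of $\tilde{K}_{\theta}$: since $\tilde{K}_{\theta}(t, X_j)$ vanishes unless $X_j$ and $t$ lie in the same block, the blockwise predictor at $t \in C_k$ is $\tilde{\hat{y}}_{\theta}(t) = r_{\theta}^t(t, X^k) \Rb_{k,\theta}^{-1} y^k$. Consequently one can write
\[
\EE[\tilde{E}_{n,\theta} \mid X] = \frac{1}{n_2} \sum_{k=1}^{n_2} f_k(X),
\qquad
f_k(X) = \frac{1}{\Delta} \int_{C_k} \EE\!\left[\bigl(Y(t) - \tilde{\hat{y}}_{\theta}(t)\bigr)^2 \,\Big|\, X \right] dt,
\]
where each $f_k(X)$ depends only on the (random) tuple $X^k$ of points falling in $C_k$. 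A direct computation of the conditional expectation gives
\[
f_k(X) = \frac{1}{\Delta} \int_{C_k}\!\bigl[\, K_0(0) - 2\, r_0^t(t, X^k) \Rb_{k,\theta}^{-1} r_{\theta}(t, X^k) + r_{\theta}^t(t, X^k) \Rb_{k,\theta}^{-1} \Rb_{k,0} \Rb_{k,\theta}^{-1} r_{\theta}(t, X^k) \,\bigr] dt.
\]

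Third, I would bound $\EE[f_k^2(X) \mid N_k = N]$ by $C_{sup}(1 + N^\alpha/\Delta^\alpha)$ for some fixed exponent $\alpha$. Using $(a+b+c)^2 \leq 3(a^2+b^2+c^2)$ reduces the estimate to three terms. The constant term contributes $O(1)$. For the two non-trivial terms, Cauchy--Schwarz on the $t$-integral, combined with Lemma \ref{lem:controle:Rmun:diagRmun:bar} to bound $\|\Rb_{k,\theta}^{-1}\|$, and the polynomial decay of Conditions \ref{cond:Kzero:deltazero} and \ref{cond:Ktheta:deltatheta} to control $\|r_{\theta}(t, X^k)\|^2$ and $\|r_0(t, X^k)\|^2$ by $C_{sup}\, N$, yields the required polynomial dependence on $N/\Delta$. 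The bound on $|\Rb_{k,0}|^2$ is handled as in Lemma \ref{lem:var:ECV:sachant:X}, via Condition \ref{cond:Kzero:deltazero} and a change-of-variables argument over $C_k$. Once this is in place, Lemma \ref{lem:cvg:mean:fk} applied to $(f_k)_{k=1,\ldots,n_2}$ produces a sequence $\Delta \to \infty$ along which $\var(\EE[\tilde{E}_{n,\theta} \mid X]) \to 0$, completing the proof.

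The main obstacle will be step three: the integration over $C_k$ (of volume $\Delta$) injects a factor that must be traded cleanly against $1/\Delta^2$ out front, and one must simultaneously control three quadratic forms involving $r_{\theta}(t, X^k)$. Compared to the $CV$ case, where $\textbf{tr}(\Rb_{k,0}\Rb_{k,\theta}^{-1}\diag(\Rb_{k,\theta}^{-1})^{-2}\Rb_{k,\theta}^{-1})$ is already a scalar, here one must first bound bilinear forms in $t$ pointwise, integrate, and then take expectation over $X^k$. As long as this bookkeeping keeps $\EE(f_k^2 \mid N_k = N)$ at most polynomial of fixed degree in $N/\Delta$ (quartic should be ample), Lemma \ref{lem:cvg:mean:fk} delivers the conclusion.
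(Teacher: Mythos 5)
Your proposal is correct and follows essentially the same route as the paper: reduce to $\tilde{E}_{n,\theta}$ via Lemma \ref{lem:blockwise:Entheta}, decompose $\EE[\tilde{E}_{n,\theta}\mid X]$ as a block average of functions $f_k$ of $X^k$, bound $\EE(f_k^2\mid N_k=N)$ polynomially in $N$ using Lemmas \ref{lem:controle:Rmun:diagRmun:bar} and \ref{lem:norm:A:b}, and conclude with Lemma \ref{lem:cvg:mean:fk}. The only (immaterial) difference is that the paper avoids your three-term expansion by bounding the conditional second moment with $(a-b)^2\leq 2(a^2+b^2)$, arriving at the same $C_{sup}(1+N_k^4)$ estimate.
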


\begin{proof}[Proof of Lemma \ref{lem:var:E:Entheta:sachant:X}]

Fix $\theta \in \Theta$. Because of Lemma \ref{lem:blockwise:Entheta} and of $|\EE(.)| \leq \EE(|.|)$, it is sufficient to show that there exists a sequence $\Delta \to + \infty$ so that the lemma holds with $E_{n,\theta}$ replaced by $\tilde{E}_{n,\theta}$. Then, because of $\left( \EE(.) \right)^2 \leq \EE \left( (.)^2 \right)$, it is sufficient to show $var \left( \EE \left[ \left. \tilde{E}_{n,\theta} \right| X \right] \right) \to_{n \to \infty} 0$.

Let $C_1,...,C_{n_2}$ be as in Definition \ref{def:blockwise} and let $\tilde{\hat{y}}_{\theta}(t)$ be as in the proof of Lemma \ref{lem:blockwise:Entheta}. Define, for $k=1,...,n_2$,
\[
g_k(X) = \frac{1}{\Delta} \int_{C_k} dt_k \EE \left( \left. \left[ Y(t_k) - \tilde{\hat{y}}_{\theta}(t_k)  \right]^2 \right| X \right). 
\]
[Note that, following the discussion in Definition \ref{def:blockwise}, we have $g_k(x) = K_0(0)$ if $N_k = 0$.]
Then $ \EE \left( \tilde{E}_{n,\theta} | X \right) = (1/n_2) \sum_{k=1}^{n_2} g_k(X)$. Following the notation of Lemma \ref{lem:var:given:X:Entheta} we have,
\begin{flalign*}
& g_k(X)  = &  \\
& \frac{1}{\Delta} \int_{C_k} dt_k \EE \left( \left. \left[ Y(t_k) - r_{\theta}^t(t_k,X^k) \bar{R}_{k,\theta}^{-1}y^k  \right]^2 \right| X \right) & \\ 
& \leq  2 \frac{1}{\Delta} \int_{C_k} dt_k \left( K_0(0) +  r_{\theta}^t(t_k,X^k) \bar{R}_{k,\theta}^{-1} \bar{R}_{k,0} \bar{R}_{k,\theta}^{-1} r_{\theta}(t_k,X^k) \right) & \\ 
 & \leq  C_{sup} + 2 \frac{1}{\Delta} \int_{C_k} dt_k   ||r_{\theta}^t(t_k,X^k)|| || \bar{R}_{k,0} \bar{R}_{k,\theta}^{-1} r_{\theta}(t_k,X^k) || ~ ~ ~ \mbox{(Lemma \ref{lem:controle:Rmun:diagRmun:bar})} & \\ 
 & \leq  C_{sup} + C_{sup} \frac{1}{\Delta} \int_{C_k} dt_k   \sqrt{N_k} N_k || \bar{R}_{k,\theta}^{-1} r_{\theta}(t_k,X^k) ||  ~ ~ ~ \mbox{(Conditions \ref{cond:Kzero:deltazero} and \ref{cond:Ktheta:deltatheta} and Lemma \ref{lem:norm:A:b})} & \\
   & \leq  C_{sup} \left( 1 + N_k^2 \right)
~ ~ ~   \mbox{(Lemma \ref{lem:controle:Rmun:diagRmun:bar})}. & 
\end{flalign*}

Hence $\EE \left( \left. g^2_{k}(X) \right| N_k = N \right) \leq C_{sup} (1 + N^4)$, so that we can complete the proof with Lemma \ref{lem:cvg:mean:fk}.

\end{proof}

\begin{lem} \label{lem:diff:mean:CV:En}
Consider a fixed $\theta \in \Theta$. Then
\[
\EE(CV_{\theta})  - \EE( E_{n,\theta} ) - \delta_0
\]
goes to $0$ as $n \to \infty$.
\end{lem}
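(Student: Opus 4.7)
The plan is to combine the blockwise approximations of Lemmas \ref{lem:blockwise} and \ref{lem:blockwise:Entheta} with an exchangeability argument inside a single block. I would fix a sequence $\Delta = \Delta(n)$ with $\Delta \to \infty$ and $\Delta = o(n)$ and use the partition $C_1, \ldots, C_{n_2}$ of Definition \ref{def:blockwise} with $n_2 = n/\Delta$. Using $|\EE(\cdot)| \leq \EE(|\cdot|)$ together with Lemmas \ref{lem:blockwise} and \ref{lem:blockwise:Entheta}, the task reduces to showing $\EE(\tilde{CV}_\theta) - \EE(\tilde{E}_{n,\theta}) - \delta_0 \to 0$. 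Since $K_0$ and $K_\theta$ are stationary, translation invariance makes all blocks identically distributed, hence
\[
\EE(\tilde{CV}_\theta) = \frac{1}{\Delta}\,\EE\Bigl[\sum_{X_i \in C_1}(y_i - \hatty_{i,\theta})^2\Bigr], \qquad \EE(\tilde{E}_{n,\theta}) = \frac{1}{\Delta}\,\EE\Bigl[\int_{C_1}(Y(t)-\hatty_\theta(t))^2\,dt\Bigr].
\]

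Next I would condition on $N_1 = m$, the number of observation points in $C_1$. Conditionally on $N_1 = m$, the $m$ points of $X^1$ are i.i.d.\ uniform on $C_1$, and the dummy variable $t$ in the integral can be treated as an independent uniform point on $C_1$. Using the symmetry of the $m$ summands in the CV sum and expanding $y_i = Y(X_i) + \epsilon_i$ (whose cross term vanishes because $\epsilon_i$ has mean zero and is independent of the LOO prediction formed from the other $m-1$ observations), a short computation yields
\[
\EE(\tilde{CV}_\theta) = \delta_0 + \EE\bigl(g(N_1')\bigr), \qquad \EE(\tilde{E}_{n,\theta}) = \EE\bigl(g(N_1)\bigr),
\]
where $N_1 \sim \mathrm{Bin}(n, \Delta/n)$, $N_1' \sim \mathrm{Bin}(n-1, \Delta/n)$, and
\[
g(k) := \EE\Bigl[\bigl(Y(X_0') - r_\theta(X_0', X_1', \ldots, X_k')^t R_{k,\theta}^{-1}\, y'\bigr)^2\Bigr]
\]
with $X_0', \ldots, X_k'$ i.i.d.\ uniform on $C_1$, $y_j' = Y(X_j') + \epsilon_j'$, and $R_{k,\theta}$ the corresponding $k \times k$ covariance matrix under $\theta$. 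The isolated $\delta_0$ pops out via $\EE(N_1) = \Delta$, while the remaining sum on the CV side collapses to $\EE(g(N_1'))$ thanks to the size-biased identity $(m/\Delta)\,\PP(\mathrm{Bin}(n,\Delta/n)=m) = \PP(\mathrm{Bin}(n-1,\Delta/n)=m-1)$.

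Coupling $N_1 = N_1' + B$ with $B \sim \mathrm{Bern}(\Delta/n)$ independent of $N_1'$ then rewrites the residual as
\[
\EE(\tilde{CV}_\theta) - \EE(\tilde{E}_{n,\theta}) - \delta_0 = -\frac{\Delta}{n}\,\EE\bigl(g(N_1'+1) - g(N_1')\bigr),
\]
so the problem boils down to controlling $\EE|g(N_1'+1) - g(N_1')|$ up to a factor that $\Delta/n$ can absorb. The hard part will be bounding the increment $|g(k+1) - g(k)|$ by a polynomial in $k$. Using the block-matrix update formula as in the proof of Lemma \ref{lem:blockwise}, the change in kriging prediction when adding one observation factors with a denominator equal to the $\theta$-conditional prediction variance at the new point, which is bounded below by $C_{inf}$ via Condition \ref{cond:Ktheta:deltatheta}. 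I would then combine Cauchy--Schwarz, Lemma \ref{lem:controle:Rmun:diagRmun:bar}, and the decay estimates of Conditions \ref{cond:Kzero:deltazero} and \ref{cond:Ktheta:deltatheta}, together with a crude row-sum bound on the operator norm of $\bar R_{1,0}$ (which grows at most linearly in $k$), to produce $|g(k+1) - g(k)| \leq C(1 + k^\alpha)$ for some fixed $\alpha$ independent of $n$. Binomial moment estimates $\EE[(N_1')^\alpha] \leq C\Delta^\alpha$ then give an $O(\Delta^{\alpha+1}/n)$ remainder, which vanishes upon choosing $\Delta = n^\gamma$ with $\gamma < 1/(\alpha+1)$ while still satisfying $\Delta \to \infty$ as Lemmas \ref{lem:blockwise} and \ref{lem:blockwise:Entheta} require.
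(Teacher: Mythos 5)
Your argument is correct, but it takes a genuinely different route from the paper's. The paper does not pass through the blockwise approximation for this lemma at all: it introduces a single extra point $X_{n+1}$ uniform on $[0,n^{1/d}]^d$ and an extra noise $\epsilon_{n+1}$, writes $\EE(E_{n,\theta})$ as the expected squared error at $X_{n+1}$ of the predictor built from all $n$ observations, and --- by exchangeability of the $n+1$ points and the $n+1$ noises --- writes $\EE(CV_{\theta})-\delta_0$ as the expected squared error at $X_{n+1}$ of the predictor built from $n-1$ of them; the difference is then killed by one Cauchy--Schwarz step together with an $L^2$ bound on the gap between the $n$-point and $(n-1)$-point kriging predictors, handled with the techniques of Lemma \ref{lem:blockwise}. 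You instead localize first via Lemmas \ref{lem:blockwise} and \ref{lem:blockwise:Entheta} and only then run the exchangeability argument inside a single cell; the size-biasing identity and the Bernoulli coupling turn the residual into the exact quantity $-(\Delta/n)\,\EE[g(N_1'+1)-g(N_1')]$. This is valid and rather illuminating: it exhibits the bias of $CV_{\theta}$ as an estimator of $E_{n,\theta}+\delta_0$ as a single-point perturbation of the binomial cell count, and makes the rate explicit, at the price of invoking the blockwise machinery (so the overall rate is still limited by the qualitative $o(1)$ of Lemmas \ref{lem:blockwise} and \ref{lem:blockwise:Entheta}). For the one step you flag as hard, you do not actually need the one-observation update formula: since $g\geq 0$, one has $|g(k+1)-g(k)|\leq g(k+1)+g(k)$, and the bound $g(k)\leq C_{sup}(1+k^2)$ is already implicit in the proof of Lemma \ref{lem:var:E:Entheta:sachant:X} (the estimate $g_k(X)\leq C_{sup}(1+N_k^2)$ there, conditioned on $N_k$); combined with Lemma \ref{lem:moment:binomial} this gives a remainder $O(\Delta^3/n)$, which vanishes for $\Delta=n^{\gamma}$ with $0<\gamma<1/3$, compatible with the choices made elsewhere in the appendix.
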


\begin{proof}[Proof of Lemma \ref{lem:diff:mean:CV:En}]

Let us consider a random observation point $X_{n+1}$ with uniform distribution on $[0,n^{1/d}]^d$. Let us also consider a Gaussian variable $\epsilon_{n+1}$ with mean $0$ and variance $\delta_0$. Consider that $X_{n+1}$ and $\epsilon_{n+1}$ are independent and independent of $X$, $Y$ and $\epsilon$. With the same argument as in the proof of Lemma \ref{lem:blockwise:Entheta}, we have
\begin{eqnarray*}
\EE \left( E_{n,\theta} \right) & = & \EE \left( \left[ Y(X_{n+1}) - \hat{y}_{\theta}(X_{n+1}) \right]^2 \right), 
\end{eqnarray*}
where we remind that $\hat{y}_{\theta}(X_{n+1}) = r_{\theta}^t(X_{n+1}) R_{\theta}^{-1} y$, with $r_{\theta}(X_{n+1}) = (K(X_1,X_{n+1}),...,K(X_n,X_{n+1}))^t$. Now, by symmetry of the roles of $X_1,...,X_{n+1}$ and $\epsilon_1,...\epsilon_{n+1}$, we have
\begin{eqnarray*}
\EE \left( CV_{\theta} \right) & = & \EE \left( \left[ Y(X_{n+1}) - \hat{y}_{n-1,\theta}(X_{n+1}) \right]^2 \right) + \delta_0, 
\end{eqnarray*}
with $\hat{y}_{n-1,\theta}(X_{n+1}) = r_{n-1,\theta}^t \tilde{R}_{n-1,\theta}^{-1} y$, with $r_{n-1,\theta} = (K(X_1,X_{n+1}),...,K(X_{n-1},X_{n+1}),0)^t$ and 
\[
\tilde{R}_{n-1,\theta} =
\begin{pmatrix}
(K_{\theta}(X_i,X_j))_{i,j=1,...,(n-1)} + \delta_{\theta} I_{n-1} & 0 \\
0 & 1
\end{pmatrix}.
\] 

Hence, using Cauchy-Schwarz
\begin{flalign} \label{eq:in:proof:diff:mean:CV:En:a}
& \left| \EE(CV_{\theta})  - \EE( E_{n,\theta} ) - \delta_0 \right| & \nonumber \\
& \leq 
\sqrt{ \EE \left( \left[ r_{\theta}^t(X_{n+1}) R_{\theta}^{-1} y -  r_{n-1,\theta}^t \tilde{R}_{n-1,\theta}^{-1} y \right]^2 \right) }
\sqrt{ \EE \left( \left[ r_{\theta}^t(X_{n+1}) R_{\theta}^{-1} y +  r_{n-1,\theta}^t \tilde{R}_{n-1,\theta}^{-1} y - 2 Y(X_{n+1})\right]^2 \right) }. &
\end{flalign}
The second term in \eqref{eq:in:proof:diff:mean:CV:En:a} is shown to be bounded with techniques similar to but simpler than in the proof of Lemma \ref{lem:control:derivative:CV}. The first term in \eqref{eq:in:proof:diff:mean:CV:En:a} is shown to go to zero with techniques similar to but simpler than in the proof of Lemma \ref{lem:blockwise}.

\end{proof}

\begin{cor} \label{cor:controle:derivative:Entheta}
For any $i=1,...,p$,
\[
\EE \left( \sup_{\theta \in \Theta} \left| \frac{\partial}{\partial \theta_i}   E_{n,\theta} \right| \right),
~ ~ ~
\EE \left( \sup_{\theta \in \Theta} \left| \frac{\partial}{\partial \theta_i} \EE \left( \left.  E_{n,\theta} \right| X \right) \right| \right)
~ ~ ~
\mbox{and}
~ ~ ~
 \sup_{\theta \in \Theta} \left| \frac{\partial}{\partial \theta_i}  \EE \left(  E_{n,\theta}  \right) \right|
\]
are bounded w.r.t $n$.
\end{cor}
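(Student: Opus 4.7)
The plan is to reduce everything to a bound on $\EE\bigl(\sup_{\theta \in \Theta} |(\partial/\partial\theta_i) E_{n,\theta}|\bigr)$ and then mimic the proof of Lemma \ref{lem:control:derivative:CV}, exploiting the observation emphasized after Theorem \ref{theorem:oracle:CV} that the integration variable $t$ in $E_{n,\theta}$ plays the role of an additional point $X_{n+1}$, independent of $(X,Y,\epsilon)$ and uniformly distributed on $[0,n^{1/d}]^d$. Because $|[0,n^{1/d}]^d| = n$, one may rewrite
\[
E_{n,\theta} = \EE_{X_{n+1}|X,Y,\epsilon} \left[ \left( Y(X_{n+1}) - r_\theta^t(X_{n+1}) R_\theta^{-1} y \right)^2 \right].
\]
The bounds on the conditional and unconditional expectations of $(\partial/\partial\theta_i) E_{n,\theta}$ follow from the bound on the supremum before expectation, by interchanging differentiation and expectation as in Corollaries \ref{cor:controle:derivative:Ltheta} and \ref{cor:controle:derivative}, together with $\sup_\theta |\EE(\cdot)| \leq \EE(\sup_\theta |\cdot|)$; so only the first quantity requires actual work.

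To handle the supremum over the compact set $\Theta$, I would apply the Sobolev-type embedding (Lemma \ref{lem:sobolev}), exactly as at the beginning of the proofs of Lemmas \ref{lem:control:derivative:ML} and \ref{lem:control:derivative:CV}. This reduces the task to showing, for every multi-index $(i_1, \ldots, i_p)$ with $i_1 + \cdots + i_p \leq p$, the uniform bound
\[
\sup_{\theta \in \Theta} \EE \left( \left| \frac{\partial^{i_1}}{\partial \theta_1^{i_1}} \cdots \frac{\partial^{i_p}}{\partial \theta_p^{i_p}} \frac{\partial}{\partial \theta_i} E_{n,\theta} \right| \right) < \infty.
\]
Differentiating $E_{n,\theta} = (1/n) \int_{[0,n^{1/d}]^d} (Y(t) - \hat{y}_\theta(t))^2 dt$ under the integral, the Leibniz rule and Cauchy-Schwarz reduce the problem to uniform bounds on $\EE(E_{n,\theta})$ (which is immediate from $\EE(Y(t) - \hat{y}_\theta(t))^2 \leq 2K_0(0) + 2\EE[\hat{y}_\theta(t)^2]$ and standard control of $\hat{y}_\theta(t)$) and on $(1/n)\int_{[0,n^{1/d}]^d} \EE\bigl[(D^\alpha \hat{y}_\theta(t))^2\bigr] dt$ for arbitrary multi-indices $\alpha$ with $|\alpha| \leq p+1$.

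The latter is the core estimate. Using the identification above to rewrite the integral as $\EE\bigl[(w_\theta^t(X_{n+1}) M_\theta y)^2\bigr]$, where $w_\theta(X_{n+1})$ is a derivative of $r_\theta(X_{n+1})$ (with entries bounded by $C_{sup}/(1 + |X_j - X_{n+1}|^{d+1})$ thanks to Condition \ref{cond:Ktheta:deltatheta}) and $M_\theta$ is a product of copies of $R_\theta^{-1}$ alternating with derivatives of $R_\theta$, I would translate verbatim the calculation running from \eqref{eq:cond:on:2n:trick} to the end of the proof of Lemma \ref{lem:control:derivative:CV}: condition on $X$, use $y|X \sim \N(0, R_0)$ to replace the quadratic form by $\Tr(R_0 M_\theta^t w_\theta w_\theta^t M_\theta)$, integrate out $X_{n+1}$ first via Lemma \ref{lem:integrale:abc} to turn the product of two kernels into a single $C_{sup}/(n(1 + |X_i - X_j|^{d+1}))$, and then apply Cauchy-Schwarz together with Lemmas \ref{lem:norm:Aun...Ak:carre}, \ref{lem:controle:Rmun:diagRmun} and \ref{lem:alternate:Rmun}.

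The only obstacle worth mentioning is the same as in Lemma \ref{lem:control:derivative:CV}: the maximum eigenvalue of $R_\theta$ is not uniformly bounded under Condition \ref{cond:iid_obs_points}, so all control of the matrix $M_\theta$ has to be extracted from the spectral bound on $R_\theta^{-1}$ (Lemma \ref{lem:controle:Rmun:diagRmun}) and from the $|\cdot|$-norm bounds of Lemma \ref{lem:norm:Aun...Ak:carre}, never from $\|\cdot\|$-norm bounds on derivatives of $R_\theta$. Compared with Lemma \ref{lem:control:derivative:CV}, the present setting is in fact slightly simpler: one works with the full matrix $R_\theta$ and vector $y$ rather than the leave-one-out objects $R_{1,\theta}$ and $y_{-1}$, and the ``extra'' uniform point $X_{n+1}$ plays the role previously played by the removed point $X_1$. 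For this reason I expect no genuinely new difficulty to arise beyond adapting notation.
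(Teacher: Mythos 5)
Your proposal is correct and follows essentially the same route as the paper: exchange differentiation with the integral over $t$, identify $t$ with an extra uniform point $X_{n+1}$, and then repeat the argument of Lemma \ref{lem:control:derivative:CV} (Sobolev embedding via Lemma \ref{lem:sobolev}, reduction to generic terms $w_{\theta}^t M_{\theta} y$, integration over $X_{n+1}$ via Lemma \ref{lem:integrale:abc}, then Cauchy--Schwarz with Lemmas \ref{lem:norm:Aun...Ak:carre}, \ref{lem:controle:Rmun:diagRmun} and \ref{lem:alternate:Rmun}), with the conditional and unconditional bounds deduced as in Corollary \ref{cor:controle:derivative}. Your identification of $X_{n+1}$ with the removed point of the leave-one-out setting is exactly the intended "only difference being that there are $n+1$ observation points instead of $n$."
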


\begin{proof}[Proof of corollary \ref{cor:controle:derivative:Entheta}]
We have
\begin{eqnarray*}
\EE \left( \sup_{\theta \in \Theta} \left| \frac{\partial}{\partial \theta_i}   E_{n,\theta} \right| \right) 
& = & \EE \left( \sup_{\theta \in \Theta} \left| \frac{\partial}{\partial \theta_i}   \frac{1}{n} \int_{[0,n^{1/d}]^d}  \left[ Y(t) - \hat{y}_{\theta}(t) \right]^2 dt \right| \right) .
\end{eqnarray*}
For fixed $n$ we can exchange derivative and integration, so we obtain
\begin{eqnarray*}
\EE \left( \sup_{\theta \in \Theta} \left| \frac{\partial}{\partial \theta_i}   E_{n,\theta} \right| \right) 
& = & \EE \left( \sup_{\theta \in \Theta} \left|    \frac{1}{n} \int_{[0,n^{1/d}]^d}  \frac{\partial}{\partial \theta_i} \left[ Y(t) - \hat{y}_{\theta}(t) \right]^2 dt \right| \right)  \\
& \leq & \EE \left(     \frac{1}{n} \int_{[0,n^{1/d}]^d}  \sup_{\theta \in \Theta} \left|  \frac{\partial}{\partial \theta_i} \left[ Y(t) - \hat{y}_{\theta}(t) \right]^2 \right| dt  \right). \\
\end{eqnarray*}
Hence, by considering $t$ as a new random observation point $X_{n+1}$ as in the proof of Lemma \ref{lem:blockwise:Entheta}, we show the first bound of the lemma as in the proof of Lemma \ref{lem:control:derivative:CV}, the only difference being that there are $n+1$ observation points instead of $n$.
The second and third bounds are proved as in the proof of corollary \ref{cor:controle:derivative}.

\end{proof}

\begin{proof}[Proof of Theorem \ref{theorem:oracle:CV}]

We have
\begin{flalign*}
& \sup_{\theta \in \Theta} \left| CV_{\theta} - \delta_0 - E_{n,\theta} \right| & \\
& \leq \sup_{\theta \in \Theta} \left| CV_{\theta} - \EE(CV_{\theta}|X) \right| + \sup_{\theta \in \Theta} \left| E(CV_{\theta} |X) - \EE(CV_{\theta}) \right| + \sup_{\theta \in \Theta} \left| \EE(CV_{\theta}) - \delta_0 - \EE(E_{n,\theta}) \right| & \\
& ~ ~ + \sup_{\theta \in \Theta} \left| \EE(E_{n,\theta}) -\EE(E_{n,\theta}|X) \right| + \sup_{\theta \in \Theta} \left| \EE(E_{n,\theta}|X) - E_{n,\theta} \right|.  &
\end{flalign*}
The five terms in the right-hand size of the above equation go to $0$ in probability. Indeed, for fixed $\theta$, the functions of $\theta$ go to $0$ in probability because of Lemmas \ref{lem:var:given:X:CVtheta}, \ref{lem:var:ECV:sachant:X}, \ref{lem:var:given:X:Entheta}, \ref{lem:var:E:Entheta:sachant:X} and \ref{lem:diff:mean:CV:En}. The convergence of the supremums over $\theta$ to $0$ is then a consequence of the fact that $\Theta$ is compact and of Lemma \ref{lem:control:derivative:CV} and corollaries \ref{cor:controle:derivative} and \ref{cor:controle:derivative:Entheta}.
Finally, since $\hat{\theta}_{CV}$ minimizes $CV_{\theta} + \delta_0$, we conclude with, for any $\theta \in \Theta$
\begin{eqnarray*}
E_{n,\hat{\theta}_{CV}} - E_{n,\theta} & \leq & CV_{\hat{\theta}_{CV}} - CV_{\theta} + 2 \sup_{\theta \in \Theta} \left| CV_{\theta} - \delta_0 - E_{n,\theta} \right| \\ 
& \leq & 2 \sup_{\theta \in \Theta} \left| CV_{\theta} - \delta_0 - E_{n,\theta} \right|. \\
\end{eqnarray*}

Hence
\[
\sup_{\theta \in \Theta} \left( E_{n,\hat{\theta}_{CV}} - E_{n,\theta} \right) = o_p(1).
\]

\end{proof}

\subsection{Technical results} \label{subsection:technical:results}

The following technical results are proved in the supplementary material.

\begin{lem} \label{lem:sobolev}
Consider a fixed number $n$ of observation points.
Consider a function $f_{\theta}(X,y)$ that is $p$ times continuously differentiable w.r.t $\theta$ for any $X,y$ and so that, for $i_1+...+i_p \leq p$, 
\[\sup_{\theta} \left| (\partial^{i_1}/\partial \theta_1^{i_1}) ...(\partial^{i_p}/\partial \theta_p^{i_p}) f_{\theta}(X,y) \right|
\]
has finite mean value w.r.t $X$ and $y$. Then, there exists a constant $C_{sup}$ (depending only of $\Theta$) so that
\[
\EE \left( \sup_{\theta \in \Theta} \left| f_{\theta}(X,y) \right|  \right) \leq C_{sup} \sum_{i_1+...+i_p \leq p}
\int_{\Theta} \EE \left( \left| \frac{\partial^{i_1}}{\partial \theta_1^{i_1}} ...\frac{\partial^{i_p}}{\partial \theta_p^{i_p}} f_{\theta}(X,y) \right| \right) d \theta.
\]
\end{lem}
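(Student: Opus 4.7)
The plan is to prove this as a pure Sobolev-type embedding inequality in dimension $p$, deterministically in $X,y$, and then take expectation at the end. The key observation is that $\Theta = [\theta_{inf},\theta_{sup}]^p$ has a tensor-product structure, which allows us to iterate a one-dimensional inequality coordinate by coordinate.

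First I would establish the one-dimensional base case: for any $C^1$ function $g : [a,b] \to \RR$ and any $\theta^* \in [a,b]$,
\[
|g(\theta^*)| \leq \frac{1}{b-a}\int_a^b |g(s)|\,ds + \int_a^b |g'(s)|\,ds.
\]
This follows from the identity $g(\theta^*) = g(s) + \int_s^{\theta^*} g'(t)\,dt$, taking absolute values, and averaging the inequality $|g(\theta^*)| \leq |g(s)| + \int_a^b |g'(t)|\,dt$ over $s \in [a,b]$.

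Next, I would apply this base case iteratively in each of the $p$ coordinates of $\theta$. Fix an arbitrary $\theta^* = (\theta_1^*,\dots,\theta_p^*) \in \Theta$ and arbitrary $X,y$. Applying the one-dimensional bound in $\theta_1$ (with $\theta_2^*,\dots,\theta_p^*$ held fixed) expresses $|f_{\theta^*}(X,y)|$ in terms of $\int |f_{(s_1,\theta_2^*,\dots,\theta_p^*)}|\,ds_1$ and $\int |\partial_{\theta_1} f_{(s_1,\theta_2^*,\dots,\theta_p^*)}|\,ds_1$. Now apply the base case in $\theta_2$ to each of these two integrands, then in $\theta_3$ to the four resulting integrands, and so on. After $p$ iterations, one obtains
\[
|f_{\theta^*}(X,y)| \leq C_{sup} \sum_{(i_1,\dots,i_p) \in \{0,1\}^p} \int_\Theta \left| \frac{\partial^{i_1}}{\partial\theta_1^{i_1}}\cdots\frac{\partial^{i_p}}{\partial\theta_p^{i_p}} f_\theta(X,y) \right| d\theta,
\]
where $C_{sup}$ depends only on $\theta_{sup}-\theta_{inf}$ and $p$. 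Each such multi-index satisfies $i_1+\cdots+i_p \leq p$, so the right-hand side is dominated by the sum in the statement of the lemma. Since the right-hand side is independent of $\theta^*$, taking the supremum over $\theta^* \in \Theta$ on the left-hand side preserves the bound.

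Finally, taking expectation on both sides and interchanging $\EE$ with the integral over $\Theta$ via Fubini-Tonelli (justified by the hypothesis that each $\sup_\theta |\partial^{i_1}_{\theta_1}\cdots\partial^{i_p}_{\theta_p} f_\theta|$ has finite mean) yields the claimed inequality. There is no real obstacle here: the proof is essentially a careful bookkeeping of a standard $W^{p,1}(\Theta) \hookrightarrow L^\infty(\Theta)$ Sobolev embedding, adapted to preserve the measurable dependence on $(X,y)$ so that expectations can be taken at the end. The only small care needed is to ensure that each iterated application of the one-dimensional bound is valid pointwise in the other variables, which is guaranteed by the $C^p$ regularity assumption on $f_\theta$.
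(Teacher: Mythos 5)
Your proof is correct: the one-dimensional averaging identity, its coordinate-by-coordinate iteration over the box $\Theta=[\theta_{inf},\theta_{sup}]^p$ (producing only multi-indices in $\{0,1\}^p$, all of total order at most $p$ and hence dominated by the stated sum), and the final application of Tonelli are all valid, and the resulting constant depends only on $\theta_{sup}-\theta_{inf}$ and $p$ as required. This is the standard $W^{p,1}(\Theta)\hookrightarrow L^{\infty}(\Theta)$ embedding argument that the lemma's label indicates, so it matches the approach the paper relegates to its supplementary material.
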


\begin{lem} \label{lem:integrale:abc}
There exists a finite constant $C_{sup}$ so that, for any $a,b \in \RR^d$,
\[
\int_{\RR^d} \frac{1}{1+ |a-c|^{d+1}} \frac{1}{1+|b-c|^{d+1}} dc \leq C_{sup} \frac{1}{1+|a-b|^{d+1}}.
\]
\end{lem}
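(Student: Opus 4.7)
The plan is a standard convolution--type estimate for $g(t) = 1/(1+|t|^{d+1})$, exploiting the fact that $g$ is integrable on $\RR^d$ (with the sup-norm, the ``sphere'' of radius $r$ has $(d-1)$-dimensional measure $O(r^{d-1})$, so $\int_{\RR^d} g(t)\,dt \leq C \int_0^\infty r^{d-1}/(1+r^{d+1})\,dr < \infty$). I will denote this finite integral by $I_0$.

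First I would apply the triangle inequality $|a-b| \leq |a-c| + |c-b|$ to observe that for every $c \in \RR^d$, at least one of $|a-c|$ and $|b-c|$ is bounded below by $|a-b|/2$. This motivates the splitting $\RR^d = A \cup B$ with
\[
A = \{c \in \RR^d : |a-c| \geq |a-b|/2\}, \qquad B = \{c \in \RR^d : |b-c| \geq |a-b|/2\}.
\]

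On $A$ we have $1 + |a-c|^{d+1} \geq 1 + (|a-b|/2)^{d+1} \geq c_0 (1+|a-b|^{d+1})$ for some $c_0 > 0$, so that
\[
\int_A \frac{1}{1+|a-c|^{d+1}} \cdot \frac{1}{1+|b-c|^{d+1}}\,dc \leq \frac{1}{c_0 (1+|a-b|^{d+1})} \int_{\RR^d} \frac{1}{1+|b-c|^{d+1}}\,dc = \frac{I_0}{c_0(1+|a-b|^{d+1})},
\]
where the last equality is by translation invariance. The integral over $B$ is handled symmetrically by swapping the roles of $a$ and $b$. Adding the two estimates yields the claim with $C_{sup} = 2 I_0 / c_0$.

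No real obstacle is expected: the argument is one or two lines once the splitting is in place, and the main quantitative input is simply that $g$ is integrable in the sup-norm, which follows from the polynomial decay rate $d+1 > d$.
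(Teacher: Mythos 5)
Your proof is correct and complete: the splitting of $\RR^d$ into the two regions where $|a-c|\geq |a-b|/2$ and $|b-c|\geq |a-b|/2$, combined with the integrability of $t\mapsto 1/(1+|t|^{d+1})$ (which holds in the sup-norm since $d+1>d$), is exactly the standard argument for this convolution-type bound, and all the constants ($c_0=2^{-(d+1)}$, $C_{sup}=2I_0/c_0$) check out. The paper relegates this lemma to the supplementary material, but this is the canonical proof and there is nothing to add.
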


\begin{lem}  \label{lem:norm:Aun...Ak:carre}
Let $0 < C_{inf} \leq C_{sup} < \infty$ be fixed independently of $n$. Let $s_n$ be a function of $n$ so that $s_n \in \NN^*$ and $ C_{inf} n \leq s_n \leq C_{sup} n$. Consider $s_n$ observation points $\bar{X}_1,...,\bar{X}_{s_n}$, independent and uniformly distributed on $[0,n^{1/d}]^d$.
Let $A_1,...,A_k$ be $k$ sequences of $s_n \times s_n$ random matrices so that, for $l=1,...,k$, $(A_l)_{i,j}$ depends only on $\bar{X}_i$ and $\bar{X}_j$ and satisfies $|(A_l)_{i,j} | \leq 1/(1+|\bar{X}_i-\bar{X}_j|^{d+1})$. Then $\EE_X \left( |A_1...A_k|^2 \right)$ is bounded w.r.t. $n$.
\end{lem}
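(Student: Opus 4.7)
The plan is to exploit the non-negative symmetric kernel $g(x,y):=1/(1+|x-y|^{d+1})$ that dominates every $|(A_l)_{i,j}|$. Define the symmetric random matrix $B$ by $B_{i,j}=g(\bar{X}_i,\bar{X}_j)$. Since $B$ has non-negative entries and $|(A_l)_{i,j}|\le B_{i,j}$, one obtains $|(A_1\cdots A_k)_{i,j}|\le (B^k)_{i,j}$ entrywise, hence
\[
|A_1\cdots A_k|^2 \;\le\; |B^k|^2 \;=\; \frac{1}{s_n}\Tr(B^{2k}) \;=\; \frac{1}{s_n}\sum_{i_0,\ldots,i_{2k-1}=1}^{s_n}\prod_{\ell=1}^{2k} g(\bar{X}_{i_{\ell-1}},\bar{X}_{i_\ell}),
\]
with the cyclic convention $i_{2k}=i_0$. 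It therefore suffices to show $\EE\Tr(B^{2k}) \le C_{sup}\, n$, since then $\EE_X|A_1\cdots A_k|^2 \le C_{sup} n/s_n \le C_{sup}$ by $s_n \ge C_{inf} n$.

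The next step is to decompose the sum over tuples $(i_0,\ldots,i_{2k-1})\in\{1,\ldots,s_n\}^{2k}$ according to the partition $\pi$ of $\{0,\ldots,2k-1\}$ induced by the equality pattern of the $i_\ell$. For a fixed pattern with $r=|\pi|$ blocks, the number of contributing tuples is at most $s_n^r$, and because the $\bar{X}_i$ are i.i.d.\ uniform on $[0,n^{1/d}]^d$,
\[
\EE\!\left[\prod_{\ell=1}^{2k} g(\bar{X}_{i_{\ell-1}},\bar{X}_{i_\ell})\right] = \frac{1}{n^r}\int_{([0,n^{1/d}]^d)^r}\prod_{e\in E_\pi} g_e(x)\,dx,
\]
where $E_\pi$ is the multiset of edges on $r$ vertices obtained by quotienting the cycle $i_0\to i_1\to\cdots\to i_{2k-1}\to i_0$ by $\pi$. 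Loops contribute $g(x_u,x_u)=1$ and drop out, while multi-edges can be collapsed using $g\le 1$, reducing the problem to an integral over a connected simple graph $G_\pi$ on $r$ vertices.

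The core estimate I need is the following: for any connected simple graph $G=(V,E)$ on $r\ge 1$ vertices,
\[
\int_{([0,n^{1/d}]^d)^r}\prod_{\{u,v\}\in E} g(x_u,x_v)\prod_{v\in V} dx_v \;\le\; C_{sup}\,n,
\]
with $C_{sup}$ depending only on $r$. This I would prove by first dropping all non-tree edges of a spanning tree $T$ of $G$ (using $g\le 1$) and then peeling leaves of $T$: integrating a leaf $v$ with neighbor $u$ contributes $\int_{\RR^d}g(x_v,x_u)\,dx_v \le C_{sup}$, and after $r-1$ peelings only $\int dx_{\mathrm{root}} \le n$ remains. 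Combining, each pattern $\pi$ contributes at most $s_n^r \cdot C_{sup} n / n^r \le C_{sup}n$ to $\EE\Tr(B^{2k})$, and since the number of partitions of $\{0,\ldots,2k-1\}$ depends only on $k$, summing yields $\EE\Tr(B^{2k}) \le C_{sup}n$, completing the proof.

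The principal difficulty is the combinatorial bookkeeping around the coincidence patterns, especially verifying that after loop removal and multi-edge collapse the residual simple graph $G_\pi$ is still connected so that a spanning tree exists. This follows automatically because quotienting a cycle by an equivalence relation produces a closed walk on the quotient vertex set, and a closed walk spans a connected subgraph on the vertices it visits; after discarding loops, the non-loop edges still connect all $r$ quotient vertices. One may alternatively use Lemma \ref{lem:integrale:abc} during leaf peeling to retain sharper $g$-factors, but the crude $\int g \le C_{sup}$ bound already suffices for the statement at hand.
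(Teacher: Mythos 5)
Your proof is correct. The chain of reductions is sound: the entrywise bound \(|(A_1\cdots A_k)_{i,j}|\le (B^k)_{i,j}\) with \(B_{i,j}=1/(1+|\bar X_i-\bar X_j|^{d+1})\) follows from the triangle inequality, symmetry of \(B\) gives \(|A_1\cdots A_k|^2\le \frac{1}{s_n}\Tr(B^{2k})\), and the partition-by-coincidence expansion is handled properly — for a pattern with \(r\) blocks the tuple count is at most \(s_n^r\le C_{sup}^r n^r\) against a density factor \(n^{-r}\), loops contribute \(1\), repeated edges are absorbed by \(g\le 1\), the quotient of a closed walk visiting all \(r\) blocks is connected (so a spanning tree exists, as you correctly verify), leaf-peeling costs a finite constant per leaf since \(\int_{\RR^d}(1+|t|^{d+1})^{-1}\,dt<\infty\) (exponent \(d+1>d\)), and the root contributes the single factor \(n\) that is then cancelled by \(s_n\ge C_{inf}n\); the Bell number of partitions of \(\{0,\dots,2k-1\}\) depends only on \(k\). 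The paper relegates its own proof to the supplementary material, so a line-by-line comparison is not possible from the main text; however, the presence of Lemma \ref{lem:integrale:abc} in the paper's toolkit (the convolution bound \(\int g(a,c)g(b,c)\,dc\le C_{sup}\,g(a,b)\)) suggests an argument that iterates that inequality along the chain of intermediate indices while tracking coincidences index-by-index — the same estimate organized sequentially rather than through \(\Tr(B^{2k})\) and graph counting. Your organization is self-contained, makes the combinatorial bookkeeping explicit, and only optionally invokes Lemma \ref{lem:integrale:abc}, at no loss of strength.
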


\begin{lem}  \label{lem:controle:Rmun:diagRmun}
The supremum over $n$, $\theta$ and $X$ of the eigenvalues of $R^{-1}_{\theta}$, $R_{1,\theta}^{-1}$, $diag(R^{-1}_{\theta})$, $diag(R_{1,\theta}^{-1})$, $diag(R^{-1}_{\theta})^{-1}$ and $diag(R_{1,\theta}^{-1})^{-1}$ is smaller than a constant $C_{sup}<+ \infty$.
\end{lem}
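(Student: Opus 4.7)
The proof rests on two structural observations: $K_\theta$ is a stationary covariance function so $(K_\theta(X_i-X_j))_{i,j}$ is positive semi-definite, and Condition \ref{cond:Ktheta:deltatheta} imposes $\delta_\theta \geq C_{inf}>0$ uniformly in $\theta$. I would proceed in four short steps, handling the matrix $R_\theta$ first and then reducing $R_{1,\theta}$ to the same argument.

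First, since $(K_\theta(X_i-X_j))_{i,j}$ is positive semi-definite, we get $R_\theta \succeq \delta_\theta I_n \succeq C_{inf} I_n$ for all $n$, $\theta$, $X$. Inverting, this yields $\lambda_{\max}(R_\theta^{-1}) \leq 1/C_{inf}$, which already provides the uniform bound on the eigenvalues of $R_\theta^{-1}$. Since the diagonal entries of any symmetric positive definite matrix are bounded by its largest eigenvalue, this also bounds the eigenvalues (= diagonal entries) of $\diag(R_\theta^{-1})$.

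Second, to bound the eigenvalues of $\diag(R_\theta^{-1})^{-1}$, it suffices to bound the diagonal entries of $R_\theta^{-1}$ from below. Writing $R_\theta$ in block form with the $i$-th row/column placed first, the Schur complement formula gives
\[
(R_\theta^{-1})_{i,i} = \frac{1}{(R_\theta)_{i,i} - b^t A^{-1} b},
\]
where $A$ is the principal submatrix obtained by removing row/column $i$ and $b$ is the corresponding vector. Since $A$ is positive definite, $b^t A^{-1} b \geq 0$, hence $(R_\theta^{-1})_{i,i} \geq 1/(R_\theta)_{i,i} = 1/(K_\theta(0)+\delta_\theta)$. The denominator is uniformly bounded above in $\theta$: $K_\theta(0) \leq A_{0,\ldots,0}$ from Condition \ref{cond:Ktheta:deltatheta} applied at $t=0$, and $\delta_\theta$ is also uniformly bounded from the same Condition (take $i_1=\cdots=i_p=0$). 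Therefore $[\diag(R_\theta^{-1})^{-1}]_{i,i} \leq K_\theta(0)+\delta_\theta \leq C_{sup}$.

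Third, the matrix $R_{1,\theta}$ is the $(n-1)\times(n-1)$ principal submatrix of $R_\theta$ obtained by deleting row and column $1$. It retains the decomposition $(\text{PSD})+\delta_\theta I_{n-1}$, so the exact same two steps apply verbatim: $R_{1,\theta} \succeq C_{inf} I_{n-1}$, and the Schur complement argument bounds $(R_{1,\theta}^{-1})_{i,i}$ below by $1/(K_\theta(0)+\delta_\theta)$. This handles the four remaining objects in the statement. Collecting the universal constants gives the single $C_{sup}<+\infty$ claimed.

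There is no real obstacle here: the result is essentially the combination of positive semi-definiteness of the covariance kernel with the uniform-in-$\theta$ lower bound on the nugget $\delta_\theta$. The only minor point worth stating carefully is the lower bound on diagonal entries of $R_\theta^{-1}$ via the Schur complement; everything else is a direct consequence of $R_\theta \succeq C_{inf} I$.
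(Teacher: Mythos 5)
Your proof is correct and is essentially the canonical argument this lemma is designed around: the uniform nugget bound $\delta_{\theta}\geq C_{inf}$ from Condition \ref{cond:Ktheta:deltatheta} gives $R_{\theta}\succeq C_{inf}I_n$ (hence the upper bounds on $R_{\theta}^{-1}$ and its diagonal), while the Schur-complement identity $(R_{\theta}^{-1})_{i,i}=\left((R_{\theta})_{i,i}-b^{t}A^{-1}b\right)^{-1}\geq (K_{\theta}(0)+\delta_{\theta})^{-1}$ together with the uniform upper bounds on $K_{\theta}(0)$ and $\delta_{\theta}$ handles $\diag(R_{\theta}^{-1})^{-1}$, and everything passes verbatim to the principal submatrix $R_{1,\theta}$. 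This matches the approach the paper relies on, so nothing further is needed.
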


\begin{lem} \label{lem:controle:Rmun:diagRmun:tilde}
Lemma \ref{lem:controle:Rmun:diagRmun} also holds when $K_{\theta}$ is replaced by $\Kt_{\theta}$ of Definition \ref{def:blockwise}.
\end{lem}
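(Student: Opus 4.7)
The plan is to reduce to the original Lemma \ref{lem:controle:Rmun:diagRmun} by observing that its proof relies only on three properties, all of which are inherited by $\Kt_{\theta}$: (a) the Gram matrix $[\Kt_{\theta}(X_i,X_j)]_{i,j}$ is positive semi-definite; (b) the noise shift satisfies $\delta_{\theta}\ge C_{inf}$, unchanged here; (c) the diagonal values are uniformly bounded, and $\Kt_{\theta}(X_i,X_i)=K_{\theta}(0)\le C_{sup}$ by Condition \ref{cond:Ktheta:deltatheta}.

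The first (and only non-routine) step is to verify (a). For this I would re-order the observation indices to group together points lying in the same cell $C_k$. Under this permutation, because $\Kt_{\theta}(X_i,X_j)=K_{\theta}(X_i-X_j)\indun_{C(X_i)=C(X_j)}$ vanishes whenever two points lie in different cells, the matrix $[\Kt_{\theta}(X_i,X_j)]$ becomes block diagonal with blocks $\bar{R}_{k,\theta}-\delta_{\theta}I_{N_k}$ (in the notation of Definition \ref{def:blockwise}). Each such block is the restriction of the PSD kernel $K_{\theta}$ to the points of $C_k$, hence PSD. The block-diagonal structure then gives positive semi-definiteness of the full matrix. The same argument applies after removing row and column $1$, since removing the row and column of a single observation point only removes that point from one block, leaving the block-diagonal PSD structure intact.

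With (a) in hand, the upper bound on the eigenvalues of $\Rt_{\theta}^{-1}$ and $\Rt_{1,\theta}^{-1}$ is immediate: $\Rt_{\theta}=[\Kt_{\theta}(X_i,X_j)]+\delta_{\theta}I_n\succeq C_{inf}I_n$, so $\|\Rt_{\theta}^{-1}\|\le 1/C_{inf}$, and similarly for $\Rt_{1,\theta}$. This also yields the upper bound on the diagonal entries of $\diag(\Rt_{\theta}^{-1})$ and $\diag(\Rt_{1,\theta}^{-1})$, since a diagonal entry of a symmetric matrix is at most its operator norm.

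For the lower bound on the diagonal entries of $\Rt_{\theta}^{-1}$ (equivalently, the upper bound on the eigenvalues of $\diag(\Rt_{\theta}^{-1})^{-1}$), I would apply the standard Schur-complement inequality $(A^{-1})_{ii}\ge 1/A_{ii}$, valid for any positive definite $A$. This gives $(\Rt_{\theta}^{-1})_{ii}\ge 1/(K_{\theta}(0)+\delta_{\theta})\ge 1/C_{sup}$, using property (c) and the uniform upper bound on $\delta_{\theta}$ provided by Condition \ref{cond:Ktheta:deltatheta}. The analogous bound for $\Rt_{1,\theta}$ follows identically. No new obstacle arises in the adaptation beyond observing the PSD property in the first paragraph; everything else is a verbatim repetition of the proof of Lemma \ref{lem:controle:Rmun:diagRmun}.
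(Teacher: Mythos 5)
Your proof is correct and follows what is essentially the only natural route, which is also the paper's: the whole content of Lemma \ref{lem:controle:Rmun:diagRmun} rests on the Gram part of the matrix being positive semi-definite, on $\delta_\theta \geq C_{inf}$, and on $K_\theta(0)+\delta_\theta$ being uniformly bounded, and your block-diagonal observation correctly verifies that the first property survives the replacement of $K_\theta$ by $\Kt_\theta$ (for both $\Rt_\theta$ and its principal submatrix $\Rt_{1,\theta}$). The remaining steps (operator-norm bound $1/C_{inf}$, diagonal entries bounded by the operator norm, and $(A^{-1})_{ii}\ge 1/A_{ii}$ for the lower diagonal bound) transfer verbatim, as you say.
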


\begin{lem} \label{lem:controle:Rmun:diagRmun:bar}
Lemma \ref{lem:controle:Rmun:diagRmun} also holds when $R_{\theta}$ is replaced by $\Rb_{k,\theta}$ of Definition \ref{def:blockwise}.
\end{lem}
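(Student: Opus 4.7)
The plan is to reduce everything to Lemma \ref{lem:controle:Rmun:diagRmun} by the observation that $\Rb_{k,\theta}$ is a principal submatrix of $R_\theta$, and then to apply the Cauchy interlacing theorem.

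\medskip

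First, I would argue that $\Rb_{k,\theta}$ coincides with the principal submatrix of $R_\theta$ obtained by keeping the rows and columns indexed by $\{ j : X_j \in C_k \}$. Indeed, conditionally on $X$, the joint distribution of $y^k$ under $(K_\theta,\delta_\theta)$ has covariance $(\Rb_{k,\theta})_{a,b} = K_\theta(X_{j_a} - X_{j_b}) + \delta_\theta \indun_{j_a = j_b}$, which is exactly the corresponding entry of $R_\theta$. (When $N_k = 0$ the statement is vacuous by the conventions of Definition \ref{def:blockwise}; when $N_k = 1$, $\Rb_{k,\theta}$ is the $1 \times 1$ matrix $K_\theta(0) + \delta_\theta$ and the bounds are immediate from Condition \ref{cond:Ktheta:deltatheta}.)

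\medskip

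Second, by Cauchy's eigenvalue interlacing theorem, any principal submatrix $B$ of a symmetric matrix $A$ satisfies $\lambda_{\min}(B) \geq \lambda_{\min}(A)$ and $\lambda_{\max}(B) \leq \lambda_{\max}(A)$. Applying this with $A = R_\theta$ and $B = \Rb_{k,\theta}$, and using Lemma \ref{lem:controle:Rmun:diagRmun} which gives $\lambda_{\min}(R_\theta) \geq 1/C_{sup}$ uniformly in $n,\theta,X$, we get $\lambda_{\max}(\Rb_{k,\theta}^{-1}) = 1/\lambda_{\min}(\Rb_{k,\theta}) \leq C_{sup}$. The same argument handles the matrix obtained from $\Rb_{k,\theta}$ by deleting one further row and column (which is itself a principal submatrix of $R_\theta$).

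\medskip

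Third, for the diagonal-matrix parts, I would use the two elementary facts that for a symmetric positive definite matrix $A$ and any index $i$,
\[
(A^{-1})_{ii} \leq \lambda_{\max}(A^{-1}) \quad \text{and} \quad (A^{-1})_{ii} \geq \frac{1}{A_{ii}},
\]
the second following from the Cauchy-Schwarz inequality $(e_i^t A e_i)(e_i^t A^{-1} e_i) \geq (e_i^t e_i)^2 = 1$. Applying both to $A = \Rb_{k,\theta}$ and observing that $(\Rb_{k,\theta})_{ii} = K_\theta(0) + \delta_\theta$ lies in $[C_{inf}, C_{sup}]$ uniformly in $\theta$ by Condition \ref{cond:Ktheta:deltatheta}, we conclude that the diagonal entries of $\Rb_{k,\theta}^{-1}$ are uniformly bounded above and below, which gives the required bounds on the eigenvalues of $\diag(\Rb_{k,\theta}^{-1})$ and $\diag(\Rb_{k,\theta}^{-1})^{-1}$.

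\medskip

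There is essentially no main obstacle here: the entire argument is a structural reduction to Lemma \ref{lem:controle:Rmun:diagRmun} via interlacing, together with the standard inequality $(A^{-1})_{ii} A_{ii} \geq 1$. The only point requiring care is checking that $\Rb_{k,\theta}$ genuinely sits inside $R_\theta$ as a principal submatrix (with the same $K_\theta$ and the same $\delta_\theta$), which is immediate from the definitions, and handling the edge cases $N_k \in \{0,1\}$ via the conventions laid out in Definition \ref{def:blockwise}.
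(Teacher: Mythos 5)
Your proof is correct, and the edge cases $N_k\in\{0,1\}$ are handled exactly as the conventions of Definition \ref{def:blockwise} intend. The paper relegates its own proof to the supplementary material, but the natural argument there is the direct one: the key point is simply that $\Rb_{k,\theta}$ has the same structure as $R_\theta$, namely a positive semi-definite matrix $\bigl(K_\theta(X_{j_a}-X_{j_b})\bigr)_{a,b}$ plus $\delta_\theta I_{N_k}$ with $\delta_\theta\geq C_{inf}$, so $\lambda_{\min}(\Rb_{k,\theta})\geq C_{inf}$ and $(\Rb_{k,\theta})_{ii}=K_\theta(0)+\delta_\theta\in[C_{inf},C_{sup}]$ verbatim as for $R_\theta$. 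Your route instead identifies $\Rb_{k,\theta}$ as a principal submatrix of $R_\theta$ and invokes Cauchy interlacing to inherit $\lambda_{\min}(\Rb_{k,\theta})\geq\lambda_{\min}(R_\theta)\geq 1/C_{sup}$ from Lemma \ref{lem:controle:Rmun:diagRmun}; this is a clean black-box reduction that would also work if the lower bound on $\lambda_{\min}(R_\theta)$ came from some source other than the additive nugget, though here it buys nothing over the one-line direct argument. The treatment of the diagonal parts via $(A^{-1})_{ii}\leq\lambda_{\max}(A^{-1})$ and $(A^{-1})_{ii}(A)_{ii}\geq 1$ is exactly what is needed and is correctly justified.
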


\begin{lem}  \label{lem:alternate:Rmun}
Let $k \in \NN$. Let $A_{1,\theta},...,A_{k,\theta}$ be $k$ sequences of symmetric random matrices (functions of $X$ and $\theta$) so that, for any $m \in \NN$, $a_1,...,a_m \in \{1,...,k\}$, $\sup_{\theta \in \Theta} \EE_X \left| A_{a_1,\theta}...A_{a_m,\theta} \right|^2$ is bounded (w.r.t $n$). Let $B_{1,\theta},...,B_{k+1,\theta}$ be $k+1$ sequences of random symmetric non-negative matrices (functions of $X$ and $\theta$) so that $\sup_{\theta}||B_{1,\theta}||,...,\sup_{\theta}||B_{k+1,\theta}||$ are bounded (w.r.t $n$ and $X$). Then 
\[
\sup_{\theta \in \Theta} \EE_{X} \left| B_{1,\theta} A_{1,\theta} B_{2,\theta}...B_{k,\theta}A_{k,\theta}B_{k+1,\theta} \right|^2
\]
is bounded w.r.t $n$.
\end{lem}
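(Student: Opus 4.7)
The plan is to bound $|B_{1,\theta} A_{1,\theta} B_{2,\theta} \cdots A_{k,\theta} B_{k+1,\theta}|^2$ by a product of two kinds of quantities that are individually controlled: operator norms $\|B_{j,\theta}\|$, which are bounded uniformly in $\theta$ and $n$ by hypothesis, and normalized Frobenius norms of pure powers $|A_{i,\theta}^k|^2$, whose expectations are uniformly bounded thanks to the hypothesis on the $A$'s applied to the word $A_{i,\theta}\cdots A_{i,\theta}$ ($k$ copies). The central tool will be the Schatten--H\"older inequality: for matrices $X_1,\ldots,X_L$ and exponents $p,q_1,\ldots,q_L\in[1,\infty]$ with $1/p = \sum_l 1/q_l$,
\[
\|X_1 X_2 \cdots X_L\|_{S_p} \leq \prod_{l=1}^{L}\|X_l\|_{S_{q_l}}.
\]

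I would apply this to the full product $C_\theta := B_{1,\theta} A_{1,\theta} B_{2,\theta} \cdots A_{k,\theta} B_{k+1,\theta}$, viewed as $2k+1$ factors, taking $p = 2$, $q_l = \infty$ on each of the $k+1$ $B$-factors, and $q_l = 2k$ on each of the $k$ $A$-factors. The compatibility condition $\tfrac12 = (k+1)\cdot 0 + k\cdot \tfrac{1}{2k}$ holds. Noting that $\|M\|_{S_\infty} = \|M\|$ and, for the symmetric matrix $A_{i,\theta}$, $\|A_{i,\theta}\|_{S_{2k}}^{2k} = \Tr(A_{i,\theta}^{2k}) = \|A_{i,\theta}^k\|_F^2 = n\,|A_{i,\theta}^k|^2$, squaring the Schatten--H\"older bound and dividing by $n$ I expect to obtain
\[
|C_\theta|^2 \;\leq\; \Bigl(\prod_{j=1}^{k+1} \|B_{j,\theta}\|^2\Bigr)\prod_{i=1}^{k} |A_{i,\theta}^k|^{2/k}.
\]

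Taking expectation and applying the classical H\"older inequality with $k$ exponents all equal to $k$ (which sum to $1$) then yields
\[
\EE_X |C_\theta|^2 \;\leq\; \Bigl(\prod_{j=1}^{k+1} \|B_{j,\theta}\|^2\Bigr)\prod_{i=1}^{k}\bigl(\EE_X |A_{i,\theta}^k|^2\bigr)^{1/k},
\]
whose right-hand side is bounded uniformly in $\theta$ and $n$ by the two ingredients highlighted in the first paragraph. The degenerate case $k = 0$ is immediate since then $C_\theta = B_{1,\theta}$ and $|B_{1,\theta}|^2 \leq \|B_{1,\theta}\|^2$.

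The step I expect to be the most delicate is locating the correct Schatten exponent $q = 2k$ on the $A$-factors: it is forced jointly by the compatibility constraint $\sum_l 1/q_l = 1/2$ (once all $B$'s are placed at $q = \infty$) and by the desire to end with $|A_{i,\theta}^k|^2$ on the right, which is exactly the quantity that the hypothesis on the $A$'s controls via the sequence $a_1 = \cdots = a_k = i$. Once this exponent is identified, the rest of the derivation consists of standard norm identities for symmetric matrices and a scalar H\"older in expectation.
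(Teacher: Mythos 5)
Your proof is correct. The Schatten--H\"older application is valid (the exponents $\infty,\ldots,\infty,2k,\ldots,2k$ do sum reciprocally to $1/2$), the identity $\|A\|_{S_{2k}}^{2k}=\Tr(A^{2k})=\|A^k\|_F^2=n\,|A^k|^2$ holds for real symmetric $A$, and the closing scalar H\"older step with all exponents equal to $k$ correctly converts $\EE_X\prod_i|A_{i,\theta}^k|^{2/k}$ into $\prod_i(\EE_X|A_{i,\theta}^k|^2)^{1/k}$, each factor being controlled by the hypothesis applied to the constant word $a_1=\cdots=a_k=i$ while the $B$-factors are bounded deterministically and uniformly in $\theta$. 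The degenerate case $k=0$ via $|B_{1,\theta}|^2\le\|B_{1,\theta}\|^2$ is also fine.

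The paper relegates its own proof to the supplementary material, so a line-by-line comparison is not available, but the shape of the hypothesis is revealing: it demands boundedness of $\EE_X|A_{a_1,\theta}\cdots A_{a_m,\theta}|^2$ for \emph{every} word length $m$ and every choice of indices, which is precisely the fuel consumed by an iterated trace Cauchy--Schwarz argument of the kind used throughout the rest of the paper (each application of $\Tr(UV)\le\|U\|_F\,\|V\|_F$ doubles the length of the $A$-words that appear, generating mixed words of growing length until the interleaved $B$'s can be absorbed by their operator norms). Your one-shot Schatten--H\"older bound needs only the constant words of length exactly $k$, so it uses strictly less of the hypothesis; this is a genuinely different and arguably cleaner route. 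What it buys is brevity and a sharper accounting of which products of the $A$'s actually matter; what it costs is the invocation of the Schatten--von Neumann H\"older inequality in place of the elementary trace Cauchy--Schwarz that the paper otherwise relies on.
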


\begin{lem} \label{lem:norm:Run:moins:Rt1:carre:carre}
Consider a fixed $\theta \in \Theta$. With the notation of Definition \ref{def:blockwise}, we have, when $n_2 = o(n)$,
\[
\EE \left( \left| ( R_{1,\theta} - \Rt_{1,\theta} )^2 \right|^2 \right) \to_{n \to \infty} 0.
\]
\end{lem}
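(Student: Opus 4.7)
Let $D := R_{1,\theta} - \tilde{R}_{1,\theta}$, a symmetric $(n-1)\times (n-1)$ matrix with entries $D_{ij} = K_\theta(X_i - X_j)\mathbf{1}_{C(X_i)\neq C(X_j)}$ for $i,j \in \{2,\ldots,n\}$; by Condition \ref{cond:Ktheta:deltatheta}, $|D_{ij}| \leq C(1+|X_i-X_j|^{d+1})^{-1}\mathbf{1}_{C(X_i)\neq C(X_j)}$. The approach is to expand
\[
|D^2|^2 \;=\; \frac{1}{n-1}\Tr(D^4) \;=\; \frac{1}{n-1}\sum_{i,j,k,l=2}^{n} D_{ij}D_{jk}D_{kl}D_{li},
\]
take expectations term by term, and split the sum according to the pattern of coincidences among $(i,j,k,l)$.

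The dominant contribution comes from the $O(n^4)$ quadruples of pairwise distinct indices, where independence of $X_i,X_j,X_k,X_l$ gives
\[
\EE[D_{ij}D_{jk}D_{kl}D_{li}] \;=\; n^{-4}\!\int_{[0,n^{1/d}]^{4d}} \prod_{\mathrm{cycle}} K_\theta(x_a - x_b)\mathbf{1}_{C(x_a)\neq C(x_b)}\,dx_1 dx_2 dx_3 dx_4.
\]
I would integrate out the inner variables $x_2$ and $x_4$ via Lemma \ref{lem:integrale:abc} while keeping the indicators at their incident edges. Since $\mathbf{1}_{C(x_a)\neq C(x_b)}$ forces $|x_a - x_b| \geq D_\Delta(x_a)$, where $D_\Delta(x)$ denotes the $\ell^\infty$-distance from $x$ to the nearest face of its block $C(x)$, the $x_2$-integral admits three competing pointwise upper bounds: one in $|x_1-x_3|$ (from Lemma \ref{lem:integrale:abc}) and two in $D_\Delta(x_1)$ and $D_\Delta(x_3)$ (each obtained by bounding $f(x_1-x_2)$ or $f(x_2-x_3)$ pointwise using the corresponding indicator). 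Interpolating by $\min(a,b,c)\leq a^\alpha b^\beta c^\gamma$ with $\alpha+\beta+\gamma=1$, and symmetrically treating the $x_4$-integral, one may choose exponents so that the total weight on $|x_1-x_3|^{d+1}$ exceeds $d/(d+1)$ (for integrability in $x_3$) while both weights on $D_\Delta(x_1)^{d+1}$ and $D_\Delta(x_3)^{d+1}$ exceed $1/(d+1)$; since the total available weight is $2$ and the required weight is $(d+2)/(d+1)<2$, such a choice is feasible. The resulting outer integral factors into a convergent $|x_1-x_3|$-integral times the boundary-layer estimate $\int_{[0,n^{1/d}]^d}(1+D_\Delta(x)^s)^{-1}\,dx = O(n/\Delta^{1/d})$ for $s>1$, yielding the bound $O(n/\Delta^{1/d})$ on the four-fold integral, and hence a contribution of $O(1/\Delta^{1/d}) = o(1)$ to $\EE|D^2|^2$.

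The remaining cases, with two or more coincidences among $(i,j,k,l)$, involve strictly fewer independent integrations over $[0,n^{1/d}]^d$ and are of smaller order; they are handled by the same tools (for instance, the diagonal case $i=k$, $j=l$ reduces to $\EE[D_{ij}^4]$ summed over $O(n^2)$ pairs). The main obstacle is the three-way interpolation above: naive alternatives---dropping the indicators, bounding $|D^2|^2\leq \|D\|^2|D|^2$ and controlling the operator norm by Frobenius, or invoking Cauchy--Schwarz with only a single indicator---all yield at best an $O(1)$ bound on $\EE|D^2|^2$ and cannot produce the required $o(1)$ \emph{uniformly} for every sequence $\Delta\to\infty$ with $n_2 = o(n)$. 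It is the simultaneous exploitation of the indicator-induced boundary-layer decay at both outer endpoints $x_1$ and $x_3$ together with the convolution estimate of Lemma \ref{lem:integrale:abc} that closes the argument.
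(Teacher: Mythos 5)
Your argument is essentially correct, and the mechanism you identify — that the indicators $\indun_{C(X_i)\neq C(X_j)}$ force each surviving entry to carry a boundary-layer penalty $1/(1+D_{\Delta}(\cdot)^{d+1})$, which after integration produces a factor $O(n/\Delta^{1/d})=o(n)$ where the indicator-free bound would give only $O(n)$ — is exactly the mechanism the paper itself packages into Lemmas \ref{lem:integrale:abc:T} and \ref{lem:integrale:abf} (whose proofs, like that of the present lemma, are deferred to the supplement) and deploys in the proof of Lemma \ref{lem:blockwise}. In fact you could shortcut your three-way interpolation entirely: integrating out each inner variable $x_2$ (resp.\ $x_4$) of the cycle directly via Lemma \ref{lem:integrale:abc:T} gives the bound $C f(D_{\Delta}(x_1,x_3))/(1+|x_1-x_3|^{d+1})$, and then $f^2\leq C_{sup} f$ together with Lemma \ref{lem:integrale:abf} bounds the remaining double integral by $o(n)$, which is the same pattern used after \eqref{eq:in:proof:lem:blockwise:int:f:Delta}. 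Your hand-rolled interpolation $\min(a,b,c)\leq a^{\alpha}b^{\beta}c^{\gamma}$ is a valid substitute (the exponent bookkeeping checks out, and your boundary-layer estimate $\int_{[0,n^{1/d}]^d}(1+D_{\Delta}(x)^{s})^{-1}dx=O(n_2\,\Delta^{(d-1)/d})=O(n/\Delta^{1/d})$ for $s>1$ is correct), just more laborious than citing the lemma the paper already provides.

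Two small imprecisions, neither fatal. First, the coincidence terms are \emph{not} of smaller order: for $i=k$, $j\neq l$ the term $D_{ij}^2D_{il}^2$ contributes $O(n^3)\cdot n^{-3}\cdot O(n)/n=O(1)$ without the indicators, i.e.\ the same order as the main term, so these cases genuinely require the same boundary-layer argument (which, as you say, applies and yields $O(\Delta^{-1/d})$); they are not negligible by counting alone. Second, the case analysis should record that $\diag(D)=0$ (since $\indun_{C(X_i)\neq C(X_i)}=0$ and the $\delta_{\theta}$ terms cancel), so every pattern with an \emph{adjacent} coincidence in the cycle ($i=j$, $j=k$, $k=l$ or $l=i$) vanishes identically, leaving only the opposite coincidences $i=k$ and/or $j=l$ to treat — which is what makes your enumeration complete.
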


\begin{lem} \label{lem:integrale:abc:T}
Let $C(t)$ be as in Definition \ref{def:blockwise}. Define, for $T \geq 0$, $f(T) = \int_{\RR^d \backslash [-T,T]^d}1/(1+|t|^{d+1}) dt$. Define, for $x \in [0,n^{1/d}]^d$, $D_{\Delta}(x) = \inf_{t \in \RR^d \backslash C(x)} |x-t|$. Define $D_{\Delta}(x_1,...,x_m) = \min_{i=1,...,m}D_{\Delta}(x_i)$.
Then, there exists a finite constant $C_{sup}$ so that, for any $n$, for any $x_1,x_2 \in [0,n^{1/d}]^d$,
\[
\int_{\RR^d} \frac{1}{1+|x_1 - x|^{d+1}} \frac{1}{1+|x_2- x|^{d+1}} \indun_{C(x) \neq C(x_1)} \indun_{C(x) \neq C(x_2)} dx \leq C_{sup} f(D_{\Delta}(x_1,x_2)) \frac{1}{1+|x_1-x_2|^{d+1}}.
\]
\end{lem}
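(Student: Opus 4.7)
The plan is to eliminate the cube indicators by relating them to $L^\infty$ distance conditions, then split the integration domain using $r := |x_1-x_2|$ as a threshold, so that in each piece one of the two kernels can be pulled out as a constant of order $1/(1+r^{d+1})$ and the remaining kernel integrates to an $f(\cdot)$ tail.

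First I would observe that, by uniqueness of $C(\cdot)$, the condition $C(x) \neq C(x_i)$ forces $x \notin C(x_i)$, hence
\[
|x - x_i| \;\geq\; \inf_{t \in \RR^d \setminus C(x_i)} |x_i - t| \;=\; D_{\Delta}(x_i), \qquad i = 1,2.
\]
Weakening the two indicators to $\indun_{|x-x_1| \geq D_{\Delta}(x_1)} \indun_{|x-x_2| \geq D_{\Delta}(x_2)}$ only enlarges the integral, so it suffices to bound
\[
I \;:=\; \int_{\RR^d} \frac{\indun_{|x-x_1| \geq D_{\Delta}(x_1)}\,\indun_{|x-x_2| \geq D_{\Delta}(x_2)}}{\bigl(1+|x_1-x|^{d+1}\bigr)\bigl(1+|x_2-x|^{d+1}\bigr)}\, dx.
\]

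Next I would split $I = I_1 + I_2$ according to $|x - x_1| \leq r/2$ versus $|x-x_1| > r/2$, where $r = |x_1 - x_2|$. On $\{|x-x_1| \leq r/2\}$ the triangle inequality gives $|x-x_2| \geq r/2$, so $1/(1+|x-x_2|^{d+1}) \leq C_{sup}/(1+r^{d+1})$ (using $1+r^{d+1} \leq 2^{d+1}(1+(r/2)^{d+1})$). Pulling this constant out and noting that the set $\{u : |u| \geq D_{\Delta}(x_1)\}$ is exactly $\RR^d \setminus [-D_{\Delta}(x_1),D_{\Delta}(x_1)]^d$, the remaining integral is bounded by $f(D_{\Delta}(x_1))$. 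The symmetric argument on $\{|x-x_1| > r/2\}$ yields $1/(1+|x-x_1|^{d+1}) \leq C_{sup}/(1+r^{d+1})$ and leaves $f(D_{\Delta}(x_2))$.

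Finally, since $f$ is non-increasing (its integrand is positive and the domain shrinks with $T$) and $D_{\Delta}(x_i) \geq D_{\Delta}(x_1,x_2)$ for $i=1,2$, we have $f(D_{\Delta}(x_i)) \leq f(D_{\Delta}(x_1,x_2))$, and summing the two pieces gives $I \leq C_{sup}\,f(D_{\Delta}(x_1,x_2))/(1+r^{d+1})$, which is the claim. There is no serious obstacle here; the only thing to be slightly careful about is the $|\cdot|$-versus-max-norm bookkeeping, which is clean because the paper uses the $L^\infty$ norm throughout and the partition cubes are $L^\infty$-balls.
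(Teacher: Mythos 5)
Your proof is correct: the reduction of the cell indicators to the $L^\infty$-distance conditions $|x-x_i|\geq D_{\Delta}(x_i)$, the split at $|x-x_1|=|x_1-x_2|/2$, and the monotonicity of $f$ together give exactly the claimed bound (with $C_{sup}=2^{d+2}$ up to the constant absorbed from $1+r^{d+1}\leq 2^{d+1}(1+(r/2)^{d+1})$), and this is the same half-distance decomposition that underlies the paper's companion Lemma \ref{lem:integrale:abc} and its proof of the present refinement. The only points worth making explicit are that $f(0)=\int_{\RR^d}(1+|t|^{d+1})^{-1}dt<\infty$ in the sup norm and that the set $\{u:|u|\geq T\}$ differs from $\RR^d\setminus[-T,T]^d$ only by a Lebesgue-null boundary, both of which are immediate.
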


\begin{lem} \label{lem:integrale:abf}

Use the notation $n_2,\Delta$, $C(t)$, $f(T)$ and $D_{\Delta}(x_1,x_2)$ of Definition \ref{def:blockwise} and Lemma \ref{lem:integrale:abc:T}. Then, when $n_2 = o(n)$,
\[
\frac{1}{n} \int_{[0,n^{1/d}]^d} dx_1 \int_{[0,n^{1/d}]^d} dx_2 \frac{1}{1+|x_1-x_2|^{d+1}} f(D_{\Delta}(x_1,x_2)) \to_{n \to +\infty} 0.
\]

\end{lem}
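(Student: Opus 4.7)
The plan is to exploit two features of the integrand: the factor $1/(1+|x_1-x_2|^{d+1})$ is integrable in one variable uniformly in the other, and $f(T) \to 0$ as $T \to \infty$ (since $f$ is the tail of an integrable function). First I would use that $f$ is non-increasing and $D_{\Delta}(x_1,x_2) = \min(D_{\Delta}(x_1),D_{\Delta}(x_2))$ to write
\[
f(D_{\Delta}(x_1,x_2)) \leq f(D_{\Delta}(x_1)) + f(D_{\Delta}(x_2)),
\]
so that by symmetry it suffices to control
\[
I_n := \frac{1}{n} \int_{[0,n^{1/d}]^d} dx_1 \int_{[0,n^{1/d}]^d} dx_2 \, \frac{f(D_{\Delta}(x_1))}{1+|x_1-x_2|^{d+1}}.
\]

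Next I would perform the $x_2$ integration, using
\[
\int_{[0,n^{1/d}]^d} \frac{dx_2}{1+|x_1-x_2|^{d+1}} \leq \int_{\RR^d} \frac{dt}{1+|t|^{d+1}} \leq C_{sup},
\]
which reduces the problem to showing
\[
J_n := \frac{1}{n} \int_{[0,n^{1/d}]^d} f(D_{\Delta}(x)) \, dx \to 0.
\]

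To prove $J_n \to 0$ I would fix an arbitrary $\epsilon > 0$ and choose $T_0 < \infty$ so that $f(T_0) \leq \epsilon$; this is possible because $1/(1+|t|^{d+1})$ is integrable on $\RR^d$ and so $f(T) \to 0$ as $T \to \infty$. Splitting the domain according to whether $D_{\Delta}(x) \leq T_0$ or not and using the monotonicity of $f$,
\[
J_n \leq \frac{f(0)}{n} \bigl| \{ x \in [0,n^{1/d}]^d : D_{\Delta}(x) \leq T_0 \} \bigr| + \epsilon.
\]
Since $[0,n^{1/d}]^d$ is partitioned into $n_2$ cubes of side length $\Delta^{1/d}$ (because $n = n_2 \Delta$), the set $\{D_{\Delta}(x) \leq T_0\}$ is the union of the boundary layers of these cubes. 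For each cube the boundary layer of thickness $T_0$ has volume at most $C_{sup} T_0 \Delta^{(d-1)/d}$ (for $n$ large enough that $T_0 \leq \Delta^{1/d}/2$, which holds since $\Delta \to \infty$ when $n_2 = o(n)$). Therefore
\[
J_n \leq \epsilon + f(0) C_{sup} T_0 \, \frac{n_2 \Delta^{(d-1)/d}}{n} = \epsilon + \frac{f(0) C_{sup} T_0}{\Delta^{1/d}}.
\]
Letting $n \to \infty$ (so $\Delta \to \infty$) and then $\epsilon \to 0$ finishes the proof.

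The argument is essentially routine; the only point requiring care is the boundary-layer volume estimate and the verification that $\Delta^{1/d} \to \infty$ suffices to swallow the fixed threshold $T_0$. I do not anticipate a serious obstacle beyond bookkeeping with the decomposition into the $n_2$ cells.
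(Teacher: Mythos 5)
Your proof is correct: the reduction $f(D_\Delta(x_1,x_2)) = f(\min(D_\Delta(x_1),D_\Delta(x_2))) \le f(D_\Delta(x_1)) + f(D_\Delta(x_2))$, integrating out $x_2$ against the integrable kernel, and the $\epsilon$/$T_0$ split with the boundary-layer volume bound $n_2\,T_0\,\Delta^{(d-1)/d} = o(n)$ all check out, using only $\Delta = n/n_2 \to \infty$. The paper relegates its own proof to the supplementary material, but this is exactly the argument it deploys in the main text for the single-integral analogue $\frac{1}{n}\int f(D_\Delta(x))\,dx$ at the end of the proof of Lemma \ref{lem:blockwise}, so your route is essentially the intended one.
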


\begin{lem} \label{lem:dist:fk}
Use the notation $n_2$, $\Delta$ and $C_1,...,C_{n_2}$ of Definition \ref{def:blockwise}. Let, for $i=1,...,n_2$, $X_1^{i},...,X_{N_i}^{i}$ be the $N_i$ components of $X$ that are in $C_i$ (so that the order of their indexes in $X$ is preserved). Then

\begin{itemize}
\item[i)] For $i=1,...,n_2$, $N_i$ follows a binomial $B(n,1/n_2)$ distribution. For any $i,j =1,...,n_2; i \neq j$, conditionally to $N_i = k_i$, $N_j$ follows a binomial $B(n-k_i,1/(n_2-1))$ distribution.
\item[ii)] Conditionally to $N_i = k_i$, $X_1^{i},...,X_{k_i}^{i}$ are independent and uniformly distributed on $C_i$.
\item[iii)] For $1 \leq i \neq j \leq n_2$, conditionally to $N_i = k_i,N_j=k_j$, the sets of random variables $(X_1^{i},...,X_{k_i}^{i})$ and  $(X_1^{j},...,X_{k_j}^{j})$ are independent, and their components are independent and uniformly distributed on $C_i$ and $C_j$ respectively.
\end{itemize}

Consider $n_2$ real-valued functions $f_1,...,f_{n_2}$ of $X$ that can be written $f_i(X) = \barf(N_i,X_1^{i},...,X_{N_i}^{i}) $, and so that, for any $t \in \RR^d$, $x_1,...,x_{N} \in \RR^d$, $\barf(N,x_1+t,...,x_N+t) = \barf(N,x_1,...,x_N)$. Then
\begin{itemize}
\item[iv)] The variables $f_1(X),...,f_{n_2}(X)$ have the same distribution. The couples $(f_i(X),f_j(X))$, for $1 \leq i \neq j \leq n_2$, have the same distribution.
\end{itemize}
\end{lem}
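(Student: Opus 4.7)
The plan is to reduce everything to a multinomial/decomposition argument. For each observation point $X_j$, write $X_j = z_{Z_j} + \tilde X_j$, where $Z_j \in \{1,\dots,n_2\}$ is the (unique) index with $X_j \in C_{Z_j}$ and $z_k$ is the lower-corner of $C_k$, so that $\tilde X_j \in [0,\Delta^{1/d}]^d$ is the position of $X_j$ relative to the corner of its sub-cube. Since $[0,n^{1/d}]^d$ is the disjoint union of the $n_2$ sub-cubes $C_1,\dots,C_{n_2}$ each of volume $\Delta = n/n_2$, a change of variables shows that $Z_j$ is uniform on $\{1,\dots,n_2\}$, that $\tilde X_j$ is uniform on $[0,\Delta^{1/d}]^d$, and that the two are independent. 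Because $X_1,\dots,X_n$ are i.i.d., the vectors $(Z_1,\dots,Z_n)$ and $(\tilde X_1,\dots,\tilde X_n)$ are independent, with the $Z_j$'s i.i.d.\ uniform on $\{1,\dots,n_2\}$ and the $\tilde X_j$'s i.i.d.\ uniform on $[0,\Delta^{1/d}]^d$.

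With this decomposition, parts (i)--(iii) are standard consequences of the multinomial structure of $(N_1,\dots,N_{n_2})=\big(|\{j:Z_j=1\}|,\dots,|\{j:Z_j=n_2\}|\big)$. First, $N_i$ is a sum of $n$ i.i.d.\ Bernoulli$(1/n_2)$ indicators $\indun_{Z_j=i}$, yielding the marginal $B(n,1/n_2)$ in (i); and conditioning on $N_i = k_i$ leaves $n-k_i$ labels that are i.i.d.\ uniform on $\{1,\dots,n_2\}\setminus\{i\}$, giving the conditional $B(n-k_i,1/(n_2-1))$ for $N_j$ in (i). Next, given the labels $(Z_1,\dots,Z_n)$, the points $X^i_1,\dots,X^i_{N_i}$ are obtained from the corresponding $\tilde X_j$'s (which are i.i.d.\ uniform on $[0,\Delta^{1/d}]^d$) by adding the corner $z_i$; this gives (ii), and the fact that the $\tilde X_j$'s are jointly independent across all $j$, hence across distinct sub-cubes, gives (iii).

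For (iv), by translation invariance of $\barf$ applied with $t=-z_i$, we have
\[
f_i(X) \;=\; \barf\!\left(N_i,\, X^i_1,\dots,X^i_{N_i}\right) \;=\; \barf\!\left(N_i,\, \tilde X^{i}_{(1)},\dots,\tilde X^{i}_{(N_i)}\right),
\]
where $\tilde X^{i}_{(\ell)}=X^i_\ell-z_i$. Combined with (i)--(iii), the distribution of $\big(N_i,(\tilde X^{i}_{(\ell)})_{\ell\leq N_i}\big)$ does not depend on $i$, so all $f_i(X)$ share a common distribution. For the pairwise claim, the joint law of $\big(N_i,N_j,(\tilde X^{i}_{(\ell)})_\ell,(\tilde X^{j}_{(\ell)})_\ell\big)$ is invariant under permuting the pair $(i,j)$ with any other pair $(i',j')$ of distinct indices, because the multinomial is exchangeable in its $n_2$ coordinates and the relative-position variables are i.i.d.\ uniforms on $[0,\Delta^{1/d}]^d$ independent of the labels; hence $(f_i(X),f_j(X))$ has the same distribution for all $i\neq j$.

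There is no real obstacle here; the only thing to be careful about is making the labels/positions decomposition rigorous, and handling the boundary sub-cube $c_{N_2}=[((N_2-1)/N_2)n^{1/d},n^{1/d}]$, which has Lebesgue measure exactly $n^{1/d}/N_2$ like the others, so the uniform-on-sub-cube claim holds verbatim. All the rest is standard multinomial and symmetry bookkeeping.
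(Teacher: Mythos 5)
Your proof is correct, and the labels/relative-positions decomposition $X_j = z_{Z_j}+\tilde X_j$ with $(Z_j)$ i.i.d.\ uniform on $\{1,\dots,n_2\}$ independent of the i.i.d.\ uniform $(\tilde X_j)$ is exactly the standard multinomial argument this lemma calls for (the paper relegates its proof to the supplementary material, but there is essentially only this route). You also correctly handle the two points that need care: the boundary sub-cubes being closed rather than half-open (a Lebesgue-null difference), and the use of translation invariance of $\barf$ to reduce each $f_i$ to a functional of $(N_i,(\tilde X^i_{(\ell)})_\ell)$, whose (joint, pairwise exchangeable) law does not depend on the cell index.
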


\begin{lem} \label{lem:cvg:mean:fk}
Use the notation of Lemma \ref{lem:dist:fk}, and consider $n_2$ functions $f_1,...,f_{n_2}$ that satisfy the conditions of Lemma \ref{lem:dist:fk}. Assume that there exist fixed even natural numbers $q,l$ and a finite constant $C_{sup}$ (independent of $n$ and $X$) so that $\EE\left( f_i^2(X) | N_i = k \right) \leq C_{sup} (1 + k^q + k^{q+l}/\Delta^l)$. Then, if $ \Delta \to_{n \to \infty} + \infty$ and $\Delta = O(n^{1/(2q+5)})$,
\[
var \left( \frac{1}{n_2} \sum_{i=1}^{n_2} f_i(X) \right) \to_{n \to \infty} 0.
\]
\end{lem}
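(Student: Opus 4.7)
The plan is to apply the conditional variance decomposition $\var(F) = \EE[\var(F|N)] + \var[\EE(F|N)]$, where $F := \frac{1}{n_2}\sum_{i=1}^{n_2} f_i(X)$ and $N := (N_1,\ldots,N_{n_2})$ is the vector of cell counts. Writing $g(k) := \EE(f_1(X)|N_1=k)$, this reduces the problem to separately controlling $\EE[\var(F|N)]$ (the within-cell contribution) and $\var(\frac{1}{n_2}\sum_i g(N_i))$ (the between-cell contribution). Both bounds rely on the moment estimate $\EE N_1^m = O(\Delta^m)$ for each fixed $m$, which holds because $N_1 \sim \mathrm{Bin}(n,1/n_2)$ has mean $\Delta \to \infty$.

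For the within-cell contribution, part (iii) of Lemma \ref{lem:dist:fk} makes the $X^i$ (and hence the $f_i(X)$) conditionally independent given $N$, so $\var(F|N) = \frac{1}{n_2^2}\sum_i \var(f_i(X)|N_i)$. Taking expectations and using exchangeability (part (iv)) together with the moment assumption yields
\[
\EE[\var(F|N)] \leq \frac{\EE f_1^2}{n_2} \leq \frac{C_{sup}(1 + \EE N_1^q + \EE N_1^{q+l}/\Delta^l)}{n_2} = O(\Delta^{q+1}/n),
\]
which vanishes under the hypothesis $\Delta = O(n^{1/(2q+5)})$ because $q+1 < 2q+5$.

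For the between-cell contribution I would view $\tilde F(Z_1,\ldots,Z_n) := \frac{1}{n_2}\sum_i g(N_i)$ as a function of the iid observation points $Z_1,\ldots,Z_n$ underlying $X$, and apply the Efron--Stein inequality. Replacing $Z_k$ by an independent copy $Z_k'$ alters only the count of the cell $C_i$ containing $Z_k$ (by $-1$) and of the cell $C_j$ containing $Z_k'$ (by $+1$), so the squared Efron--Stein increment is bounded by $\frac{2}{n_2^2}[(g(N_i)-g(N_i-1))^2 + (g(N_j+1)-g(N_j))^2]$. The main obstacle here is that no Lipschitz bound on $g$ is available, so I would control each telescoping increment crudely by the triangle inequality, combined with the Jensen bound $g(k')^2 \leq \EE(f_1^2|N_1=k') \leq C_{sup}(1+k'^q+k'^{q+l}/\Delta^l)$. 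Taking expectations --- conditionally on $Z_k \in C_i$ the count $N_i$ is $1+\mathrm{Bin}(n-1,1/n_2)$, still with $m$-th moment $O(\Delta^m)$ --- each of the $n$ Efron--Stein terms is $O(\Delta^q/n_2^2)$, and summing over $k$ gives $\var(\tilde F) = O(n\Delta^q/n_2^2) = O(\Delta^{q+2}/n) \to 0$, because $q+2 < 2q+5$. The gap between $q+2$ and $2q+5$ in the hypothesized rate for $\Delta$ is precisely the slack required to absorb the polynomial loss $\Delta^{q/2}$ arising from the non-Lipschitz telescoping increments.
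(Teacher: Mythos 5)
Your proof is correct. The paper relegates its own proof of this lemma to the supplementary material, so I compare against the most natural reading of the scaffolding it sets up in Lemma \ref{lem:dist:fk}: the emphasis there on exchangeability of the pairs $(f_i(X),f_j(X))$ (part (iv)) and on conditional independence given $(N_i,N_j)$ (part (iii)) points to an argument of the form $\var(n_2^{-1}\sum_i f_i) = n_2^{-1}\var(f_1) + (1-n_2^{-1})\cov(f_1,f_2)$ followed by a direct bound on $\cov(g(N_1),g(N_2))$ for two coordinates of a multinomial vector. You instead apply the law of total variance with respect to the full count vector $N$ and then handle the between-cell term $\var(n_2^{-1}\sum_i g(N_i))$ by Efron--Stein over the $n$ underlying uniform points. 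Both routes reduce the problem to the same two ingredients (the moment bound $\EE N_1^m = O(\Delta^m)$ of Lemma \ref{lem:moment:binomial} and the Jensen bound $g(k)^2 \le \EE(f_1^2\mid N_1=k)$); yours avoids any explicit computation with the multinomial dependence between $N_i$ and $N_j$, at the price of invoking a concentration inequality and of discarding all cancellation in the increments $g(N_i)-g(N_i-1)$, which is harmless since $\Delta^{q+2}/n\to 0$ under the stated rate (indeed your argument only needs $\Delta = o(n^{1/(q+2)})$, so it proves a slightly stronger statement). Two small points to tidy up: (i) your factorization $\var(F\mid N)=n_2^{-2}\sum_i\var(f_i\mid N_i)$ needs mutual conditional independence of the cell point-sets given the \emph{whole} vector $N$, which is true for i.i.d.\ uniform points but goes slightly beyond the literal pairwise statement of part (iii), so it deserves a sentence of justification; (ii) your closing remark that the gap between $q+2$ and $2q+5$ absorbs a loss of $\Delta^{q/2}$ is not substantiated by your own computation and should simply be dropped --- the extra room in the exponent $2q+5$ is presumably what the supplementary-material proof needs, not yours.
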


\begin{lem} \label{lem:moment:binomial}
Let $N$ follow the binomial distribution $B(n,1/n_2)$, with $n/n_2 = \Delta \to_{n \to \infty} + \infty$. Then, for any $k \in \NN$, there exists a finite constant $C_{sup}$, independent of $n$, so that 
\[
\EE \left( N^k \right) \leq C_{sup} \Delta^k.
\]
\end{lem}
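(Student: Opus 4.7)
The plan is to use the factorial moment expansion of the binomial, which is the cleanest route. Writing $N = \sum_{i=1}^n B_i$ as a sum of i.i.d.\ Bernoulli variables with parameter $p = 1/n_2$, we expand
\[
N^k = \sum_{i_1,\dots,i_k=1}^n B_{i_1}\cdots B_{i_k},
\]
and group the multi-indices $(i_1,\dots,i_k)$ according to the number $j$ of distinct values they take. Since $\EE(B_{i_1}\cdots B_{i_k}) = p^j$ whenever $|\{i_1,\dots,i_k\}| = j$, this yields
\[
\EE(N^k) \;=\; \sum_{j=1}^k S(k,j)\, n(n-1)\cdots(n-j+1)\, p^j,
\]
where $S(k,j)$ are the Stirling numbers of the second kind (only depending on $k$, not on $n$). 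Equivalently one can see this as the expansion of $N^k$ in terms of the falling factorials of $N$, whose expectations under $B(n,p)$ are exactly $n(n-1)\cdots(n-j+1)p^j$.

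Next I bound each term: for $j \leq k$,
\[
n(n-1)\cdots(n-j+1)\, p^j \;\leq\; n^j p^j \;=\; (np)^j \;=\; \Delta^j.
\]
Therefore $\EE(N^k) \leq \sum_{j=1}^k S(k,j)\, \Delta^j$.

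Finally, since $\Delta \to \infty$, there exists $n_0$ such that $\Delta \geq 1$ for all $n \geq n_0$; for such $n$, $\Delta^j \leq \Delta^k$ for every $j \leq k$, so $\EE(N^k) \leq \bigl(\sum_{j=1}^k S(k,j)\bigr)\Delta^k$. For the finitely many values $n < n_0$, both $\EE(N^k)$ and $\Delta^k$ are bounded below and above by positive finite constants, so the ratio $\EE(N^k)/\Delta^k$ is bounded on this finite range. Taking $C_{sup}$ to be the maximum of the Stirling-number sum and the finitely many ratios gives the desired uniform bound. There is essentially no obstacle; the only mild subtlety is handling small $n$ where $\Delta$ may be less than one, which is dispatched by compactness of the finite set of exceptional values of $n$.
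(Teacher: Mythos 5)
Your argument is correct: the factorial-moment expansion $\EE(N^k)=\sum_{j=1}^k S(k,j)\,n(n-1)\cdots(n-j+1)\,p^j$ together with the bound $n(n-1)\cdots(n-j+1)p^j\leq(np)^j=\Delta^j$ and the observation that $\Delta^j\leq\Delta^k$ once $\Delta\geq 1$ is exactly the standard route for this lemma (the paper relegates its proof to the supplementary material, and this is the canonical computation), and your handling of the finitely many exceptional $n$ with $\Delta<1$ is fine. No gaps.
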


\begin{lem} \label{lem:sum:d:Ci:Cj}
Let $n_2$, $\Delta$ and $C_1,...,C_{n_2}$ be as in Definition \ref{def:blockwise}. Assume that $\Delta$ is lower bounded, as a function of $n$. Then, there exists a finite constant $C_{sup}$ so that for any $n$, $i \in \{1,...,n_2\}$,
\[
\sum_{j=1}^{n_2} \frac{1}{1+d(C_i,C_j)^{d+1}} \leq C_{sup}.
\]
\end{lem}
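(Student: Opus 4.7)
The plan is to reduce the sum to a one-dimensional sum over shells of cubes at increasing distances from $C_i$, and then use the polynomial decay $1/(1+r^{d+1})$ together with the lower bound on $\Delta$ to make this sum converge uniformly in $n$.

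First I would parametrize. By Definition \ref{def:blockwise}, $n=n_2\Delta$, $n_2 = N_2^d$ and each cube $C_j$ is a product of $d$ intervals of length $n^{1/d}/N_2 = \Delta^{1/d}$. There is a natural bijection between $\{1,\dots,n_2\}$ and $\{1,\dots,N_2\}^d$, so I identify each cube with a multi-index $J=(j_1,\dots,j_d)$. For two multi-indices $I,J$, one computes directly from $|t| = \max_k |t_k|$ that
\[
d(C_i,C_j) \;=\; \Delta^{1/d} \, \max_{k=1,\dots,d} \max\bigl(0,\, |i_k-j_k|-1\bigr).
\]

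Next I would group the $j$'s by this distance. For fixed $I$ and integer $m\ge 0$, let $S_m$ denote the set of $J\in\{1,\dots,N_2\}^d$ with $\max_k \max(0,|i_k-j_k|-1) = m$. For $m=0$, $|S_0|\le 3^d$. For $m\ge 1$, $S_m$ is contained in the difference of two sup-norm balls, giving $|S_m| \le (2m+3)^d - (2m+1)^d \le C_d\, m^{d-1}$ for a constant $C_d$ depending only on $d$. Hence
\[
\sum_{j=1}^{n_2} \frac{1}{1+d(C_i,C_j)^{d+1}}
\;\le\; 3^d \;+\; \sum_{m=1}^{\infty} \frac{C_d\, m^{d-1}}{1+\bigl(m\Delta^{1/d}\bigr)^{d+1}}.
\]

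Finally I would invoke the hypothesis that $\Delta$ is lower bounded in $n$, say $\Delta \ge c > 0$. Then $\Delta^{1/d} \ge c^{1/d}$, so each term in the tail is at most $C_d\, m^{d-1}/(c^{(d+1)/d}\, m^{d+1}) = C'_d/m^2$, giving a convergent upper bound independent of $n$. There is no real obstacle here: the only substantive point is that the polynomial growth $O(m^{d-1})$ of the shell cardinalities is strictly dominated by the decay $O(1/m^{d+1})$ of the kernel, and the lower bound on $\Delta$ is exactly what rules out the degenerate regime where many distant cubes could still be at very small distance.
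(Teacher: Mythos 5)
Your proof is correct. The paper defers this lemma to the supplementary material, but your argument — computing the side length $\Delta^{1/d}$ and the sup-norm distance formula $d(C_i,C_j)=\Delta^{1/d}\max_k\max(0,|i_k-j_k|-1)$, then summing over shells with $O(m^{d-1})$ cubes against the decay $1/(1+(m\Delta^{1/d})^{d+1})$ to get a convergent $\sum_m m^{-2}$ bound uniform in $n$ — is the standard and surely intended route, and every step (the shell cardinality bound and the use of the lower bound on $\Delta$) checks out.
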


\begin{lem} \label{lem:norm:A:b}
Let $A$ be a real $m_1 \times m_2$ matrix and $b$ be a $m_2$-dimensional real column vector. Then
\[
||Ab||^2 \leq m_1 m_2 \left( \max_{i,j} A_{i,j}^2 \right) ||b||^2.
\]
\end{lem}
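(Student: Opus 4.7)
The plan is to bound $\|Ab\|^2$ entry by entry using Cauchy--Schwarz, and then bound the resulting sum of squared matrix entries by the trivial inequality $\sum_{i,j} A_{i,j}^2 \leq m_1 m_2 \max_{i,j} A_{i,j}^2$.

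Concretely, I would first write $(Ab)_i = \sum_{j=1}^{m_2} A_{i,j} b_j$ and apply the Cauchy--Schwarz inequality to the inner product of the row vector $(A_{i,1},\ldots,A_{i,m_2})$ with $b$, obtaining
\[
(Ab)_i^2 \leq \Bigl(\sum_{j=1}^{m_2} A_{i,j}^2 \Bigr)\Bigl(\sum_{j=1}^{m_2} b_j^2\Bigr) = \Bigl(\sum_{j=1}^{m_2} A_{i,j}^2\Bigr) \|b\|^2.
\]
Summing over $i=1,\ldots,m_1$ then yields
\[
\|Ab\|^2 = \sum_{i=1}^{m_1}(Ab)_i^2 \leq \|b\|^2 \sum_{i=1}^{m_1}\sum_{j=1}^{m_2} A_{i,j}^2.
\]
Finally, each of the $m_1 m_2$ terms $A_{i,j}^2$ is at most $\max_{i,j} A_{i,j}^2$, which gives the claimed bound.

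There is no real obstacle; the statement is a crude but convenient deterministic inequality, used elsewhere in the appendix to convert entrywise bounds on a matrix (from Conditions \ref{cond:Kzero:deltazero} and \ref{cond:Ktheta:deltatheta}) into bounds on matrix-vector products. I would present the two displayed inequalities above and conclude in a single short paragraph.
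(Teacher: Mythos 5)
Your proof is correct, and this row-wise Cauchy--Schwarz argument followed by the crude bound $\sum_{i,j} A_{i,j}^2 \leq m_1 m_2 \max_{i,j} A_{i,j}^2$ is the standard (essentially the only natural) route to this inequality; the paper defers its proof to the supplementary material, but there is no indication it does anything different.
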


\section*{Acknowledgements}
The research leading to this paper was partly carried out when the author was affiliated to the University of Vienna. The author thanks Josselin Garnier, Benedikt P\"otscher and Luc Pronzato for constructive discussions.

\section*{Supplementary material}
In the supplementary material, we give the proof of the lemmas stated in Section \ref{subsection:technical:results}.

\bibliographystyle{apalike}
\bibliography{Biblio}

\end{document}